\newcommand\blfootnote[1]{%
	\begingroup
	\renewcommand\thefootnote{}\footnote{#1}%
	\addtocounter{footnote}{-1}%
	\endgroup}
\theoremstyle{plain}
\newtheorem{theorem}{Theorem}[section]
\newtheorem{proposition}[theorem]{Proposition}
\newtheorem{lemma}[theorem]{Lemma}
\newtheorem{corollary}[theorem]{Corollary}
\newtheorem{fact}{Fact}[subsection]
\theoremstyle{definition}
\newtheorem{definition}[theorem]{Definition}
\newtheorem{remark}[theorem]{Remark}
\newtheorem{example}[theorem]{Example}
\newtheorem{question}[theorem]{Question}
\newtheorem{problem}[theorem]{Problem}
\newtheorem{problems}[theorem]{Problems}
\newcommand{\NN}{\mathbb{N}}
\newcommand{\QQ}{\mathbb{Q}}
\newcommand{\RR}{\mathbb{R}}
\newcommand{\CC}{\mathbb{C}}
\newcommand{\KK}{\mathbb{K}}
\newcommand{\TT}{\mathbb{T}}
\newcommand{\Ac}{\mathcal{A}}
\newcommand{\AP}{\mathcal{AP}}
\newcommand{\Ci}{\mathscr{C}}
\newcommand{\Del}{\Delta}
\newcommand{\Ec}{\mathcal{E}}
\newcommand{\Fc}{\mathcal{F}}
\newcommand{\Ic}{\mathcal{I}}
\newcommand{\IP}{\mathcal{IP}}
\newcommand{\Lc}{\mathcal{L}}
\newcommand{\Nc}{\mathcal{N}}
\newcommand{\Part}{\mathscr{P}}
\newcommand{\Sc}{\mathcal{S}}
\newcommand{\Tc}{\mathcal{T}}
\newcommand{\TS}{\mathcal{TS}}
\newcommand{\Uc}{\mathcal{U}}
\newcommand{\Per}{\operatorname{Per}}
\newcommand{\URec}{\operatorname{URec}}
\newcommand{\FRec}{\operatorname{FRec}}
\newcommand{\UFRec}{\operatorname{UFRec}}
\newcommand{\RRec}{\operatorname{RRec}}
\newcommand{\Rec}{\textup{Rec}}
\newcommand{\FHC}{\operatorname{FHC}}
\newcommand{\UFHC}{\operatorname{UFHC}}
\newcommand{\RHC}{\operatorname{RHC}}
\newcommand{\HC}{\textup{HC}}
\newcommand{\Com}{\mathcal{C}}
\newcommand{\cl}{\overline}
\newcommand{\orb}{\textup{Orb}}
\newcommand{\lspan}{\textup{span}}
\newcommand{\res}{\arrowvert}
\newcommand{\eps}{\varepsilon}
\newcommand{\del}{\delta}
\newcommand{\til}{\widetilde} 
\newcommand{\diam}{\textup{diam}}
\newcommand{\ep}[2]{\langle #1 , #2 \rangle}
\newcommand{\Bdsup}{\overline{\operatorname{Bd}}}
\newcommand{\Bdinf}{\underline{\textup{Bd}}}
\newcommand{\dsup}{\overline{\operatorname{dens}}}
\newcommand{\dinf}{\underline{\operatorname{dens}}}
\newcommand{\BDsup}{\overline{\mathcal{BD}}}
\newcommand{\BDinf}{\underline{\mathcal{BD}}}
\newcommand{\Dsup}{\overline{\mathcal{D}}}
\newcommand{\Dinf}{\underline{\mathcal{D}}}
\begin{document}

\begin{center}
	\begin{LARGE}
		{\bf Questions in linear recurrence: From the $T\oplus T$-problem to lineability}
	\end{LARGE}
\end{center}

\vspace*{-0.4cm}

\begin{center}
	\begin{large}
		by
	\end{large}
\end{center}

\vspace*{-0.4cm}

\begin{center}
	\begin{large}
		Sophie Grivaux, Antoni L\'opez-Mart\'inez \& Alfred Peris\blfootnote{\textbf{2020 Mathematics Subject Classification}: 47A16, 37B20, 37B02.\\ \textbf{Key words and phrases}: Linear dynamical systems, hypercyclicity, recurrence, quasi-rigidity, lineability.\\ \textbf{Journal-ref (Part I)}: Analysis and Mathematical Physics, Volume 15, article number 1, (2025).\\ \textbf{DOI}: https://doi.org/10.1007/s13324-024-00999-8\\ \textbf{Journal-ref (Part II)}: Banach Journal of Mathematical Analysis, Volume 19, article number 61, (2025).\\ \textbf{DOI}: https://doi.org/10.1007/s43037-025-00448-z}
	\end{large}
\end{center}

\vspace*{-0.2cm}

\begin{abstract}
	We study, for a continuous linear operator $T$ on an F-space $X$, when the direct sum operator $T\oplus T$ is recurrent on $X\oplus X$. In particular: we establish, for recurrence, the analogous notion to that of (topological) {\em weak-mixing} for transitivity/hypercyclicity, namely {\em quasi-rigidity}; and we construct a recurrent but not quasi-rigid operator on each infinite-dimensional Banach space, solving the {\em $T\oplus T$-recurrence problem} in the negative way. The quasi-rigidity notion is closely related to the {\em dense lineability} of the set of recurrent vectors, and using similar conditions we study the lineability and dense lineability properties for the set of $\Fc$-recurrent vectors. \textbf{This document has been split into two already published papers (see \cite{GrivauxLoPe2025_AMP_questions-I} and \cite{GrivauxLoPe2025_BJMA_questions-II})}.
\end{abstract}

\vspace*{-0.2cm}

\section{Introduction}

This paper focuses on some aspects of {\em Linear Dynamics} and we have the following general setting: a ({\em real} or {\em complex}) {\em linear dynamical system} $(X,T)$ is a pair formed by a continuous linear operator $T:X\longrightarrow X$ on a (real or complex) separable infinite-dimensional F-space (i.e.\ $X$ is a completely metrizable topological vector space). We will denote by $\Lc(X)$ the {\em set of continuous linear operators} acting on such a space $X$, and $\KK$ will stand for either the real or complex field, $\RR$ or $\CC$.\\[-5pt]

Given a linear dynamical system $(X,T)$, the $T$-orbit of a vector $x \in X$ is the set
\[
\orb(x,T) := \{ x, Tx, T^2x, T^3x, ...\} = \{ T^nx : n \in \NN_0 \},
\]
where $\NN_0:=\NN\cup\{0\}$. We say that a vector $x \in X$ is {\em hypercyclic} for $T$ if $\orb(x,T)$ is a dense set in $X$; and that the operator $T$ is {\em hypercyclic} if it admits a hypercyclic vector. The so-called {\em Birkhoff transitivity theorem} (see for instance \cite[Theorem~1.2]{BaMa2009_book}) shows the equivalence between hypercyclicity and the notion of  topological transitivity: a system $(X,T)$ is called {\em topologically transitive} if for each pair of non-empty open subsets $U,V \subset X$ one can find some (and hence infinitely many) $n \in \NN_0$ such that $T^n(U) \cap V \neq \varnothing$. It follows that the set $\HC(T)$ of hypercyclic vectors for $T$ is a dense $G_{\delta}$ subset of $X$ as soon as $T$ is hypercyclic.\\[-5pt]

Hypercyclicity has been, historically, the main and most studied notion in Linear Dynamics. For instance, the following long-standing problem, which we call the {\em $T\oplus T$-hypercyclicity problem}, was posed in 1992 by D. Herrero \cite{Herrero1991}:

\begin{question}[\textbf{The $T\oplus T$-hypercyclicity problem}]\label{Q:T+T-hyp}
	Let $T$ be a hypercyclic operator on the F-space $X$. Is the operator $T\oplus T$ acting on the direct sum $X\oplus X$ hypercyclic?
\end{question}

Recall that an operator $T$ is said to be (topologically) {\em weakly-mixing} if and only if $T\oplus T$ is transitive, so the question above asks if there exists any transitive but not weakly-mixing operator. The active research on hypercyclicity yielded many equivalent reformulations of Question~\ref{Q:T+T-hyp}, and the approach which will interest us most is the one taken in 1999 by B\`es and the third author of this paper. Here is the main result they obtained in \cite{BesPe1999}:
\begin{enumerate}[--]
	\item {\em A continuous linear operator $T \in \Lc(X)$ is weakly-mixing if and only if it satisfies the so-called Hypercyclicity Criterion}.
\end{enumerate}
In other words, $T\oplus T \in \Lc(X\oplus X)$ is a hypercyclic operator if and only if the following holds: {\em there exist two dense subsets $X_0,Y_0 \subset X$, an increasing sequence of positive integers $(n_k)_{k\in\NN}$, and a family of (not necessarily continuous) mappings $S_{n_k}:Y_0\longrightarrow X$ such that
\begin{enumerate}[{\em(i)}]
	\item $T^{n_k}x \to 0$ for each $x \in X_0$;
	
	\item $S_{n_k}y \to 0$ for each $y \in Y_0$;
	
	\item $T^{n_k}S_{n_k}y \to y$ for each $y \in Y_0$. 
\end{enumerate}}
Question~\ref{Q:T+T-hyp} was finally answered negatively in 2006 by De La Rosa and Read, when they exhibited a hypercyclic operator on a particular Banach space whose direct sum with itself is not hypercyclic (see \cite{DeRead2009}). The techniques used there were later refined by Bayart and Matheron in order to construct hypercyclic but not weakly-mixing operators, on each Banach space admitting a normalized unconditional basis whose associated forward shift is continuous, as for instance on the classical $c_0(\NN)$ and $\ell^p(\NN)$ spaces, $1\leq p<\infty$ (see \cite{BaMa2007nwm,BaMa2009nwm}).\\[-5pt]

The aim of this paper is to study the previous problem, and to develop the corresponding theory, for the notion of {\em recurrence}: we say that a vector $x \in X$ is {\em recurrent} for $T$ if $x$ belongs to the closure of its forward orbit $\cl{\orb(Tx,T)}$; and that the operator $T$ is {\em recurrent} if its set $\Rec(T)$ of recurrent vectors is {\em dense} in $X$. Note that, in order to say that an operator $T$ has a {\em recurrent behaviour} it is not enough to assume that $\Rec(T)\neq\varnothing$ since the zero-vector is recurrent for every linear map. It is shown in \cite[Proposition~2.1]{CoMaPa2014} that the notion of recurrence coincides with that of {\em topological recurrence}, i.e.\ the property that for each non-empty open subset $U \subset X$ one can find some (and hence infinitely many) $n \in \NN$ such that $T^n(U) \cap U \neq \varnothing$. In addition, and as in the hypercyclicity case, when $T$ is recurrent its set $\Rec(T)$ of recurrent vectors is a dense $G_{\delta}$ subset of $X$.\\[-5pt]

Recurrence is one of the oldest and most studied concepts in the {\em Topological Dynamics} area of knowledge (see \cite{Furstenberg1981_book,Banks1999,KwiLiOpYe2017}), but in Linear Dynamics its systematic study began with the 2014 paper of Costakis, Manoussos and Parissis \cite{CoMaPa2014}. As one can naturally expect, many questions in {\em linear recurrence} remain open and here we will be particularly interested in the so-called {\em $T\oplus T$-recurrence problem}, which was stated in \cite[Question~9.6]{CoMaPa2014}:

\begin{question}[\textbf{The $T\oplus T$-recurrence problem}]\label{Q:T+T-rec}
	Let $T$ be a recurrent operator on the F-space $X$. Is the operator $T\oplus T$ acting on the direct sum $X\oplus X$ recurrent?
\end{question}

This question has been recently restated as an open problem in the 2021 paper \cite{ChenKosVe2021} for C$_0$-semigroups of operators. In the first part of this work we identify the systems $(X,T)$ for which the recurrence of $T$ implies that of $T\oplus T$. This characterization is given in terms of {\em quasi-rigidity} (see Definition~\ref{Def:qr}), which will be for recurrence, the analogous property to that of {\em weak-mixing} for hypercyclicity (see Theorem~\ref{The:Juan}). We then answer Question~\ref{Q:T+T-rec} negatively, by constructing some recurrent but not quasi-rigid operators (see Section~\ref{Sec:3n-qr}).

The second part of this paper is dedicated to the study of some properties related to the notion of lineability: recall that a subset $Y$ of an F-space $X$ is called ({\em dense}) {\em lineable} if $Y\cup\{0\}$ contains a (dense) infinite-dimensional vector space. A well-known result (due to Herrero and Bourdon) states that the set $\HC(T)$ is {\em dense lineable} for every hypercyclic operator $T$ (see \cite[Theorem 2.55]{GrPe2011_book}); moreover, one easily observes that if $T$ is a quasi-rigid operator, then $\Rec(T)$ is also {\em dense lineable} (see Proposition~\ref{Pro:qr->dense.lin}). Here we generalize these results to a rather general class of Furstenberg families $\Fc$: first we show that it makes sense to study quasi-rigidity from the $\Fc$-recurrence point of view (see Proposition~\ref{Pro:qr.filter}); and then we check that, for an $\Fc$-recurrent operator $T$, the set $\Fc\Rec(T)$ of $\Fc$-recurrent vectors is always {\em lineable} and usually even {\em dense lineable} (see Theorems~\ref{The:lineability} and \ref{The:dense.lineability}). As a consequence we obtain the {\em Herrero-Bourdon theorem} for $\Fc$-hypercyclicity (see Subsection~\ref{SubSec:5.3dense.lin.F-hyp}).\\[-5pt]

The paper is organized as follows. Section~\ref{Sec:2qr} is devoted to define and study {\em quasi-rigidity}, which will be the analogous recurrence property to that of {\em weak-mixing} for hypercyclicity. We show in Section~\ref{Sec:3n-qr} that there exist recurrent but not quasi-rigid operators, answering Question~\ref{Q:T+T-rec} in a negative way and solving the {\em $T\oplus T$-recurrence problem}. In Section~\ref{Sec:4F-rec} we recall the definition of $\Fc$-recurrence, showing that quasi-rigidity is a particular case of such a concept, and studying both the weakest and strongest possible $\Fc$-recurrence notions. The {\em lineability} and {\em dense lineability} properties, for the set $\Fc\Rec(T)$ of $\Fc$-recurrent vectors, are studied in Section~\ref{Sec:5lin+dense.lin}. We finally gather, in Section~\ref{Sec:6open}, some left open problems.\\[-5pt]

We refer the reader to the textbooks \cite{BaMa2009_book,GrPe2011_book} for any unexplained but standard notion about hypercyclicity, or more generally, about Linear Dynamics. Along the paper we use the environment ``\textbf{Question}'' to state the different problems that we solve, and we use the word ``\textbf{Problem(s)}'' to state the left open problems.

\section{Quasi-rigid dynamical systems}\label{Sec:2qr}

In this section we introduce and study the concept of {\em quasi-rigidity}, which is naturally defined in the broader framework of {\em Topological Dynamics} (i.e.\ without assuming linearity): a pair $(X,T)$ is called a {\em dynamical system} if $T$ is a continuous self-map on a Hausdorff topological space $X$. An important class of dynamical systems, which we will repeatedly use, is the family of Polish systems: a pair $(X,T)$ is said to be a {\em Polish dynamical system} whenever $T$ is a continuous self-map of a separable completely metrizable space $X$.

\begin{definition}
	Given any dynamical system $(X,T)$ and any $N \in \NN$ we will denote by $T_{(N)}:X^N\longrightarrow X^N$ the {\em $N$-fold direct product} of $T$ with itself, i.e.\ $(X^N,T_{(N)})$ is the dynamical system
	\[
	T_{(N)} := \underbrace{T\times\cdots\times T}_{N} : \underbrace{X\times\cdots\times X}_{N} \longrightarrow \underbrace{X\times\cdots\times X}_{N},
	\]
	where $X^N:=\underbrace{X\times\cdots\times X}_{N}$ is the {\em $N$-fold product} of $X$ and $T_{(N)}(x_1,...,x_N) := (Tx_1,...,Tx_N)$.
\end{definition}
If $(X,T)$ is linear, then $(X^N,T_{(N)})$ will refer to the {\em $N$-fold direct sum} linear dynamical system
\[
T_{(N)} := \underbrace{T\oplus\cdots\oplus T}_{N} : \underbrace{X\oplus\cdots\oplus X}_{N} \longrightarrow \underbrace{X\oplus\cdots\oplus X}_{N},
\]
and in this case $X^N:=\underbrace{X\oplus\cdots\oplus X}_{N}$ is the {\em $N$-fold direct sum} of $X$.

\subsection{Quasi-rigidity: definition and equivalences}\label{SubSec:2.1qr}

By a well-known theorem of Furstenberg, once a dynamical system $(X,T)$ is weakly-mixing (i.e.\ if $T\times T$ is topologically transitive) then so is every $N$-fold direct product system $(X^N,T_{(N)})$. See \cite[Theorem~1.51]{GrPe2011_book}. Hence, in order to completely answer Question~\ref{Q:T+T-rec} we should first study the recurrent dynamical systems $(X,T)$ for which every $N$-fold direct product is again recurrent. A first attempt would be to rely on the well-known and classical notion of {\em rigidity}: a dynamical system $(X,T)$ is said to be {\em rigid} if there exists a strictly increasing sequence $(n_k)_{k\in\NN} \in \NN^{\NN}$ such that $T^{n_k}x \to x$ for every $x \in X$. Rigidity has been studied in different contexts such as measure theoretic recurrence \cite{FursWeiss1977}, dynamical systems on topological spaces \cite{GlasMaon1989} and also for linear systems \cite{EisGri2011,CoMaPa2014}. It is a really strong form of recurrence which implies that $X^N=\Rec(T_{(N)})$ for all $N \in \NN$, so this is not the exact notion we are looking for. Nonetheless, it motivates the definition:

\begin{definition}[\textbf{Quasi-rigidity}]\label{Def:qr}
	A dynamical system $(X,T)$ will be called:
	\begin{enumerate}[--]
		\item {\em quasi-rigid} if there exist a strictly increasing sequence $(n_k)_{k \in \NN} \in \NN^{\NN}$ and a dense subset $Y$ of $X$ such that $T^{n_k}x \to x$, as $k\to\infty$, for every $x \in Y$. In this case we will say that $T$ is {\em quasi-rigid with respect to $(n_k)_{k \in \NN}$}.
		
		\item {\em topologically quasi-rigid} if there exists $(n_k)_{k \in \NN} \in \NN^{\NN}$ strictly increasing and such that: for every non-empty open subset $U \subset X$ there is $k_U \in \NN$ with $T^{n_k}(U) \cap U \neq \varnothing$ for all $k\geq k_U$. In this case we will say that $T$ is {\em topologically quasi-rigid with respect to $(n_k)_{k \in \NN}$}.
	\end{enumerate}
\end{definition}

Our aim is to show that, given a dynamical system $(X,T)$, the previous notions characterize the recurrent behaviour of every $N$-fold direct product system $(X^N,T_{(N)})$ under very weak assumptions (dynamically speaking) on the underlying space $X$. The following result shows that, when the underlying space $X$ is second-countable, {\em topological quasi-rigidity} is for recurrence the analogous property to that of {\em weak-mixing} for transitivity:

\begin{lemma}\label{Lem:t.qr.2AN}
	Let $(X,T)$ be a dynamical system on a second-countable space $X$. The following are equivalent:
	\begin{enumerate}[{\em(i)}]
		\item $T$ is topologically quasi-rigid;
		
		\item $T_{(N)}$ is topologically recurrent for every $N \in \NN$.
	\end{enumerate}  
\end{lemma}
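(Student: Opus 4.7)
The direction (i) $\Rightarrow$ (ii) is the easier one and does not require second-countability. Assuming $T$ is topologically quasi-rigid with witness sequence $(n_k)$, I would take an arbitrary non-empty open set $W \subset X^N$ and, passing to a smaller basic open set inside, reduce to the case $W = U_1 \times \cdots \times U_N$ with each $U_i$ non-empty open. For each $i$ there is some $k_i$ such that $T^{n_k}(U_i)\cap U_i \neq \varnothing$ for all $k\geq k_i$, i.e.\ there exists $x_i^{(k)} \in U_i$ with $T^{n_k}x_i^{(k)} \in U_i$. Taking $k\geq \max_i k_i$ and assembling $(x_1^{(k)},\ldots,x_N^{(k)})\in W$ gives a point in $W\cap T_{(N)}^{-n_k}(W)$, so $T_{(N)}$ is topologically recurrent (with, in fact, the same witness sequence).

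The nontrivial direction is (ii) $\Rightarrow$ (i), and this is where second-countability is used. Fix a countable base $\{V_j\}_{j\in\NN}$ of non-empty open subsets of $X$ and set
\[
A_j := \{n\in\NN : T^n(V_j)\cap V_j \neq \varnothing\} \quad (j\in\NN).
\]
The topological recurrence of $T_{(N)}$ applied to the open set $V_{j_1}\times\cdots\times V_{j_N}$ yields, by exactly the coordinate-wise argument used in the easy direction, that $A_{j_1}\cap\cdots\cap A_{j_N}$ is non-empty; since this holds for arbitrarily large indices (replacing any one $V_{j_i}$ by the smaller basic open set containing a chosen previous witness and iterating), every finite intersection of the $A_j$'s is in fact \emph{infinite}. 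Alternatively one just notes that $A_{j_1}\cap\cdots\cap A_{j_N}$ being non-empty for every $N$-tuple in a countable base already forces infiniteness of finite intersections, since one can apply the same non-emptiness statement to smaller basic neighbourhoods.

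With this finite intersection property in hand, I would build $(n_k)$ diagonally: choose $n_k \in A_1 \cap A_2 \cap \cdots \cap A_k$ with $n_k > n_{k-1}$, which is possible because that intersection is infinite. Then for every fixed $j$ and every $k\geq j$, we have $n_k \in A_j$, that is $T^{n_k}(V_j)\cap V_j \neq \varnothing$. Given any non-empty open $U\subset X$, pick $V_j\subset U$ from the base; then $T^{n_k}(U)\cap U \neq \varnothing$ for every $k\geq j$, so $k_U := j$ witnesses topological quasi-rigidity of $T$.

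The main obstacle is the direction (ii) $\Rightarrow$ (i): one has to extract a \emph{single} sequence $(n_k)$ that is eventually good for every open set simultaneously, rather than producing, for each finite tuple of open sets, an ad hoc sequence of return times. Second-countability makes this feasible by reducing the uncountable family of open conditions to a countable one that can be handled by the standard diagonal/enumeration construction above.
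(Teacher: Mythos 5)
Your proposal is correct and follows essentially the same route as the paper: the easy direction reduces to basic product open sets and uses that the quasi-rigidity sequence eventually works for each factor simultaneously, while the converse fixes a countable base, observes that the return set of $V_{j_1}\times\cdots\times V_{j_N}$ under $T_{(N)}$ is exactly $A_{j_1}\cap\cdots\cap A_{j_N}$ and is infinite, and extracts a single increasing sequence by diagonalization over the base. The paper phrases the diagonal step as a direct recursive choice of $n_k$ from the product $U_1\times\cdots\times U_k$, but this is the same argument.
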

\begin{proof}
	(i) $\Rightarrow$ (ii): suppose that $T$ is topologically quasi-rigid with respect to $(n_k)_{k\in\NN}$, let $N \in \NN$ and consider any finite sequence of non-empty open sets $U_1,...,U_N \subset X$. Using now the topological quasi-rigidity assumption we can find $k_0 \in \NN$ such that
	\[
	T^{n_k}(U_j) \cap U_j \neq \varnothing \quad \text{ for every } 1\leq j\leq N \text{ and all } k\geq k_0.
	\]
	Then, for the open set $U=U_1\times\cdots\times U_N \subset X^N$ we have that $T_{(N)}^{n_k}(U) \cap U \neq \varnothing$ for all $k\geq k_0$, which implies that $T_{(N)}$ is topologically recurrent since $U_1,...,U_N$ are arbitrary.\\[-5pt]
	
	(ii) $\Rightarrow$ (i): assume that $T_{(N)}$ is topologically recurrent for every $N \in \NN$ and let $(U_s)_{s \in\NN}$ be a countable base of (non-empty) open sets for $X$. We can construct recursively an increasing sequence $(n_k)_{k\in\NN}$ such that $T^{n_k}(U_s) \cap U_s \neq \varnothing$ for every $s,k \in \NN$ with $1\leq s\leq k$: given any $k \in \NN$, the set $U:=U_1\times\cdots\times U_k$ is a non-empty open subset of $X^k$ and by the topological recurrence of the map $T_{(k)}$ we can pick $n_k \in \NN$ sufficiently large with
	\[
	T_{(k)}^{n_k}(U) \cap U \neq \varnothing \quad \text{ and hence } \quad T^{n_k}(U_s) \cap U_s \neq \varnothing \quad \text{ for all } 1\leq s \leq k.
	\]
	It is easy to check that $T$ is topologically quasi-rigid with respect to $(n_k)_{k\in\NN}$.
\end{proof}

As one would expect, if the underlying space is completely metrizable then we can identify the ``pointwise'' {\em quasi-rigidity} notion with that of {\em topological quasi-rigidity}:

\begin{proposition}\label{Pro:qr.complete}
	Let $(X,T)$ be a Polish dynamical system. The following are equivalent:
	\begin{enumerate}[{\em(i)}]
		\item $T$ is quasi-rigid;
		
		\item $T$ is topologically quasi-rigid.
	\end{enumerate}  
\end{proposition}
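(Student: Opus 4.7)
The plan is to establish both implications by Baire category, with separability and completeness of $X$ providing enough room to uniformize in the end.

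The direction (i) $\Rightarrow$ (ii) is essentially immediate: if $T$ is quasi-rigid with respect to a sequence $(n_k)_{k\in\NN}$ with witnessing dense set $Y \subset X$, then for any non-empty open $U \subset X$ I pick $y \in Y \cap U$; as $T^{n_k} y \to y \in U$ with $U$ open, we have $T^{n_k} y \in U$ for all $k$ larger than some $k_U$, giving $T^{n_k}(U) \cap U \neq \varnothing$ for every $k \geq k_U$. Hence $T$ is topologically quasi-rigid with respect to the same sequence.

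The direction (ii) $\Rightarrow$ (i) is the substantial one. The main obstacle is that a naive Baire argument inside $X$ only produces a dense $G_\delta$ set of points $x$ admitting a convergent subsequence $T^{n_{k_l(x)}} x \to x$ with $k_l$ depending on $x$, whereas the definition of quasi-rigidity demands a \emph{single} sequence working simultaneously on some dense set. To uniformize in $x$ I would pass to the countable product $X^{\NN} = \prod_{s \in \NN} X$, on which the map $\hat T := T \times T \times \cdots$ is continuous; $X^{\NN}$ is again Polish when equipped with any product-compatible complete metric, for instance $\hat d(\hat x, \hat y) := \sum_{s \in \NN} 2^{-s} \min(d(x_s, y_s), 1)$ for $d$ a complete metric on $X$. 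Since the basic open sets in $X^{\NN}$ are cylinders of the form $U_1 \times \cdots \times U_M \times X \times X \times \cdots$, topological quasi-rigidity of $T$ with respect to $(n_k)$ transfers to $\hat T$: intersecting the finitely many cofinite subsets of $(n_k)$ produced by applying topological quasi-rigidity to each coordinate $U_s$ still yields a cofinite set, while the tail coordinates are preserved trivially.

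The final step is a Baire argument in the Polish space $X^{\NN}$. Exactly as in the standard proof that the set of recurrent points is dense $G_\delta$ under topological recurrence, I would show that
\[
\hat Y := \bigcap_{j, N \in \NN} \bigcup_{k \geq N} \left\{\hat x \in X^{\NN} : \hat d\bigl(\hat T^{n_k}\hat x, \hat x\bigr) < 1/j\right\}
\]
is a dense $G_\delta$ subset of $X^{\NN}$ (for each fixed $j, N$ the inner union is open, and its density is proven by shrinking a basic cylinder to $\hat d$-diameter less than $1/j$ and invoking topological quasi-rigidity of $\hat T$ with respect to $(n_k)$). Separately, by separability of $X$, the set $D \subset X^{\NN}$ of sequences $\hat x = (x_s)_{s \in \NN}$ for which $\{x_s : s \in \NN\}$ is dense in $X$ is also dense $G_\delta$. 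Hence $\hat Y \cap D \neq \varnothing$ by Baire. Picking $\hat x = (x_s) \in \hat Y \cap D$ and setting $(m_l) := (n_{k_l})$ to be the subsequence witnessing $\hat T^{n_{k_l}} \hat x \to \hat x$, we obtain $T^{m_l} x_s \to x_s$ for every $s \in \NN$ while $\{x_s : s \in \NN\}$ is dense in $X$, so $T$ is quasi-rigid with respect to $(m_l)$.
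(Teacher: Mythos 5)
Your argument is correct. The implication (i) $\Rightarrow$ (ii) coincides with the paper's. For (ii) $\Rightarrow$ (i) you take a genuinely different route: the paper runs an explicit double-indexed Cantor-type recursion, building for each basic open set $U_s$ a nested sequence of open sets $V_{s,k}$ of diameter tending to $0$ with $T^{n_k}\bigl(\cl{V_{s,k+1}}\bigr)\subset V_{s,k}$, and extracts the witnesses $y_s\in U_s$ from the intersections; you instead pass to the Polish space $X^{\NN}$, observe that the \emph{cofiniteness} built into topological quasi-rigidity is exactly what allows the property to survive finite intersections and hence to transfer to $\hat T$ on basic cylinders, and then invoke Baire in $X^{\NN}$ together with the comeagerness of the set of sequences with dense range. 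All the steps check out: the transfer to cylinders is where the ``for every $k\geq k_U$'' clause is genuinely needed (mere topological recurrence of each coordinate would not let you synchronize the indices), the inner unions defining $\hat Y$ are open by continuity of $\hat T^{n_k}$ and dense by shrinking a cylinder below $\hat d$-diameter $1/j$, and the final diagonal extraction of $(n_{k_l})$ from membership in $\hat Y$ is routine. The trade-off is roughly this: the paper's construction is self-contained and keeps the witnesses localized (one $y_s$ in each prescribed basic set $U_s$), while your version is shorter modulo standard facts, makes transparent why the cofinite clause in Definition~\ref{Def:qr} is the right hypothesis, and yields slightly more for free (a comeager set of witnessing sequences in $X^{\NN}$, hence a single subsequence along which a dense --- indeed comeager-in-$X^{\NN}$-parametrized --- family of points returns). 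Both arguments ultimately rest on completeness via nested closed sets, Baire's theorem being the packaged form of the paper's hands-on recursion.
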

\begin{proof}
	The implication (i) $\Rightarrow$ (ii) is straightforward: assume that there is a dense subset $Y \subset X$ and an increasing sequence $(n_k)_{k\in\NN}$ such that $T^{n_k}x \to x$ for every $x \in Y$. For each non-empty open subset $U \subset X$ we can select $x \in U \cap Y$, and then
	\[
	T^{n_k}x \in T^{n_k}(U) \cap U \quad \text{ for every sufficiently large } k \in \NN.
	\]
	Hence $T$ is topologically quasi-rigid with respect to $(n_k)_{k\in\NN}$. Let us prove (ii) $\Rightarrow$ (i): assume that $d(\cdot,\cdot)$ is a metric defining the complete topology of $X$, let $(U_s)_{s\in\NN}$ be a countable base of open sets for $X$ and set $V_{s,s}:=U_s$ for each $s \in \NN$. By (ii) we can find $n_1 \in \NN$ such that
	\[
	T^{n_1}(U_1) \cap U_1 = T^{n_1}(V_{1,1}) \cap V_{1,1} \neq \varnothing.
	\]
	The continuity of $T$ ensures the existence of a non-empty open subset $V_{1,2} \subset X$ of diameter (with respect to $d$) less than $\frac{1}{2^2}$ for which $\cl{V_{1,2}} \subset V_{1,1}$ and $T^{n_1}\left(\cl{V_{1,2}}\right) \subset V_{1,1}$. Suppose now that for some $k \in \NN$ we have already constructed:
	\begin{enumerate}[--]
		\item finite sequences $(V_{s,j})_{s\leq j\leq k}$ of open subsets of $X$, for each $1\leq s\leq k-1$;
		
		\item and a finite increasing sequence of positive integers $(n_j)_{1\leq j\leq k-1}$;
	\end{enumerate}
	with the properties that $V_{s,j}$ has $d$-diameter less than $\frac{1}{2^j}$ when $s<j$, and also that
	\[
	\cl{V_{s,j}} \subset V_{s,j-1} \quad \text{ and } \quad T^{n_{j-1}}\left(\cl{V_{s,j}}\right) \subset V_{s,j-1} \quad \text{ for all } 1\leq s < j \leq k.
	\]
	Then, considering the open subsets $V_{1,k}, V_{2,k}, ..., V_{k,k} \subset X$ and using again (ii) we can select a positive integer $n_k \in \NN$ with $n_k>n_{k-1}$ for which $T^{n_k}(V_{s,k}) \cap V_{s,k} \neq \varnothing$ for all $1\leq s\leq k$. Again the continuity ot $T$ ensures the existence, for each $1\leq s\leq k$, of a non-empty open subset $V_{s,k+1} \subset X$ of $d$-diameter less than $\frac{1}{2^{k+1}}$ and such that
	\[
	\cl{V_{s,k+1}} \subset V_{s,k} \quad \text{ and } \quad T^{n_{k}}\left(\cl{V_{s,k+1}}\right) \subset V_{s,k} \quad \text{ for all } 1\leq s\leq k.
	\]
	A recursive argument gives us an increasing sequence of positive integers $(n_k)_{k\in\NN}$ and, for each $s \in \NN$, a sequence of non-empty open sets $(V_{s,k})_{k=s}^{\infty}$ such that each set $V_{s,k}$ has $d$-diameter less than $\frac{1}{2^k}$ when $s<k$, but also satisfying
	\[
	\cl{V_{s,k+1}}\subset V_{s,k} \quad \text{ and } \quad  T^{n_k}\left(\cl{V_{s,k+1}}\right)\subset V_{s,k} \quad \text{ for all } s,k \in \NN \text{ with } s\leq k.
	\]
	By the Cantor intersection theorem, for each $s\in\NN$ there is a unique vector $y_s\in X$ with
	\[
	\{y_s\} = \bigcap_{k\geq s} \cl{V_{s,k}} \subset V_{s,s} = U_s.
	\]
	We deduce that $Y:=\{ y_s: s \in \NN \}$ is a dense subset of $X$. Since for each $s \in \NN$ we have that $T^{n_k}y_s \in T^{n_k}\left(\cl{V_{s,k+1}}\right) \subset V_{s,k}$ for all $k > s$, we get that
	\[
	\limsup_{k\to\infty} d(T^{n_k}y_s,y_s) \leq \limsup_{k\to\infty} \left(\diam_d(V_{s,k})\right) \leq \limsup_{k\to\infty} \frac{1}{2^k} = 0,
	\]
	and hence $T$ is quasi-rigid with respect to the sequence $(n_k)_{k\in\NN}$.
\end{proof}

The proof of the implication (i) $\Rightarrow$ (ii) in Proposition~\ref{Pro:qr.complete} shows that every quasi-rigid map is topologically quasi-rigid even if $X$ is not a metrizable space. The converse fails in general (consider for instance \cite[Example 12.9]{GrPe2011_book}). Combining the previous results we get:

\begin{theorem}\label{The:Juan}
	Let $(X,T)$ be a Polish dynamical system. The following are equivalent:
	\begin{enumerate}[{\em(i)}]
		\item $T$ is quasi-rigid;
		
		\item $T$ is topologically quasi-rigid;
		
		\item $T_{(N)}$ is topologically recurrent for every $N \in \NN$;
		
		\item $T_{(N)}$ is recurrent for every $N \in \NN$.
	\end{enumerate}
\end{theorem}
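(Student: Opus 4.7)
The plan is to close the loop of four conditions by leveraging the two results already established: Lemma~\ref{Lem:t.qr.2AN} gives (ii) $\Leftrightarrow$ (iii) (every Polish space is second-countable), and Proposition~\ref{Pro:qr.complete} gives (i) $\Leftrightarrow$ (ii). Hence only the position of (iv) with respect to the chain (i)--(iii) needs to be addressed, which I would do by proving the two short implications (i) $\Rightarrow$ (iv) $\Rightarrow$ (iii).

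For (i) $\Rightarrow$ (iv), assume $T$ is quasi-rigid with respect to an increasing sequence $(n_k)_{k\in\NN}$, witnessed by a dense subset $Y \subset X$. Fix $N \in \NN$; then $Y^N$ is dense in $X^N$, and for every $(y_1,\dots,y_N) \in Y^N$ coordinate-wise convergence gives
\[
T_{(N)}^{n_k}(y_1,\dots,y_N) = (T^{n_k}y_1,\dots,T^{n_k}y_N) \longrightarrow (y_1,\dots,y_N) \quad \text{as } k\to\infty,
\]
so each such tuple is recurrent for $T_{(N)}$. Thus $\Rec(T_{(N)})$ contains the dense set $Y^N$ and $T_{(N)}$ is recurrent.

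For (iv) $\Rightarrow$ (iii), I would rely on the general (and easy) fact that in any dynamical system, recurrence in the ``pointwise'' sense implies topological recurrence: given any non-empty open $V \subset X^N$, pick a recurrent vector $z \in V \cap \Rec(T_{(N)})$, which exists by density; then there is some $n \in \NN$ with $T_{(N)}^n z \in V$, so $T_{(N)}^n(V) \cap V \neq \varnothing$. The only subtlety is ensuring the existence of such a recurrent $z$ does not rely on a linear structure (it does not: the argument above uses only that $\Rec(T_{(N)})$ is dense and that $V$ is non-empty and open). I do not expect any serious obstacle here; the whole proof is essentially a bookkeeping exercise that glues the two previous results together via the natural diagonal embedding $Y \hookrightarrow Y^N$ and the elementary observation that a dense set of recurrent points forces topological recurrence.
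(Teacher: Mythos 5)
Your proof is correct and follows essentially the same route as the paper: the paper likewise combines Lemma~\ref{Lem:t.qr.2AN} and Proposition~\ref{Pro:qr.complete} with the trivial implications (i) $\Rightarrow$ (iv) $\Rightarrow$ (iii) to close the cycle (the paper additionally cites \cite[Proposition~2.1]{CoMaPa2014} for (iii) $\Leftrightarrow$ (iv), but as you observe this is not needed once the loop closes). Your two short arguments for (i) $\Rightarrow$ (iv) and (iv) $\Rightarrow$ (iii) are both sound and purely topological, as required.
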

\begin{proof} 
	The implications (i) $\Rightarrow$ (ii) $\Rightarrow$ (iii) (and also (i) $\Rightarrow$ (iv) $\Rightarrow$ (iii)) are trivial even if the space $X$ is not second-countable neither complete. When $X$ is second-countable we get the equivalence (ii) $\Leftrightarrow$ (iii) by Lemma~\ref{Lem:t.qr.2AN}. If we just assume completeness, then we have the equivalence (iii) $\Leftrightarrow$ (iv) by \cite[Proposition 2.1]{CoMaPa2014}. Finally, using the Polish hypothesis we get the equivalence (i) $\Leftrightarrow$ (ii) by Proposition~\ref{Pro:qr.complete}.
\end{proof}

\begin{remark}
	Even though we have worked in the general setting of {\em Polish dynamical systems}, it is worth mentioning that Theorem~\ref{The:Juan} holds true for every:
	\begin{enumerate}[--]
		\item {\em compact dynamical system}, i.e.\ when $T$ is a continuous self-map of a compact metrizable space $X$ (many references only study this class of systems, see for instance \cite{Banks1999,Furstenberg1981_book,KwiLiOpYe2017});
		
		\item continuous linear operator $T$ acting on any separable F-space $X$. In other words, the previous result  holds true for {\em linear dynamical systems} as defined in the Introduction.
	\end{enumerate}
	In the linear setting we have got the following relations between the already exposed concepts (we denote by ``\textit{HC}'' the \textit{Hypercyclicity Criterion}):
	\begin{equation*}
		\begin{CD}
			\text{{\em satisfying the HC}}  @>{\text{Theorem~\ref{The:Juan}}}>>  \text{{\em quasi-rigidity}} \\
			@VVV @VVV \\
			\text{{\em hypercyclicity}}  @>{\text{already known}}>> \text{{\em recurrence}}
		\end{CD}
	\end{equation*}$\ $\\
	The notion of {\em quasi-rigidity} is for recurrence, the analogous property to that of {\em satisfying the Hypercyclicity Criterion} (and hence to that of {\em weak-mixing}) for hypercyclicity (see \cite{BesPe1999}).
\end{remark}

\subsection{Quasi-rigid operators}

It is now time to discuss how the quasi-rigidity notion influences the structure of the set of recurrent vectors for a linear dynamical system $(X,T)$. We start by showing that quasi-rigidity implies the dense lineability of the set of recurrent vectors:

\begin{proposition}\label{Pro:qr->dense.lin}
	If $T:X\longrightarrow X$ is a quasi-rigid operator, then $\Rec(T)$ is dense lineable.
\end{proposition}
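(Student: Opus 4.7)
The plan is to exhibit the required dense infinite-dimensional subspace \emph{for free} from the quasi-rigidity hypothesis, by showing that the natural set of ``quasi-rigid vectors'' is already linear. Concretely, let $(n_k)_{k\in\NN}$ be a strictly increasing sequence witnessing the quasi-rigidity of $T$, with associated dense set $Y\subset X$ such that $T^{n_k}x\to x$ for every $x \in Y$, and define
\[
M := \{\, x \in X : T^{n_k}x \to x \text{ as } k\to\infty \,\}.
\]
My first step is to observe that $M$ is a linear subspace of $X$. This is a direct consequence of the linearity of each iterate $T^{n_k}$ combined with the continuity of addition and scalar multiplication on the F-space $X$: if $T^{n_k}x\to x$ and $T^{n_k}y\to y$, then $T^{n_k}(\alpha x+\beta y) = \alpha T^{n_k}x+\beta T^{n_k}y \to \alpha x+\beta y$ for all $\alpha,\beta\in\KK$.

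The second step is to note the inclusions $Y \subset M \subset \Rec(T)$. The first holds by the quasi-rigidity assumption, so $M$ is dense in $X$. For the second, pick any $x \in M$: since $n_k\to\infty$, we have $n_k\geq 1$ for all $k$ large enough, so $(T^{n_k}x)_{k\in\NN}$ is eventually a subsequence of $\orb(Tx,T)$, and its convergence to $x$ forces $x\in\cl{\orb(Tx,T)}$, i.e.\ $x\in\Rec(T)$.

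Finally, I would verify that $M$ is infinite-dimensional. Since $X$ is a Hausdorff topological vector space, every finite-dimensional linear subspace of $X$ is closed; a dense finite-dimensional subspace would therefore coincide with $X$, contradicting $\dim X=\infty$. Hence $M$ is a dense infinite-dimensional linear subspace of $X$ contained in $\Rec(T)\cup\{0\} = \Rec(T)$, which is exactly the statement that $\Rec(T)$ is dense lineable.

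There is really no genuine obstacle here: the entire argument rests on the single observation that the pointwise quasi-rigidity condition $T^{n_k}x\to x$ is preserved under linear combinations \emph{along the same sequence} $(n_k)_{k\in\NN}$, which is what allows one to pass from a dense \emph{set} $Y$ to a dense \emph{subspace} $M$ at no cost.
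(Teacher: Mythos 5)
Your proof is correct and rests on exactly the same observation as the paper's: the condition $T^{n_k}x\to x$ along a fixed sequence is preserved under linear combinations, so a dense set of such vectors yields a dense subspace of recurrent vectors (the paper takes $\lspan(Y)$ where you take the a priori larger set $M$, an immaterial difference). You are simply more explicit than the paper about the inclusion $M\subset\Rec(T)$ and about why a dense subspace of an infinite-dimensional Hausdorff topological vector space must be infinite-dimensional.
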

\begin{proof}
	Assume that $Y \subset X$ is a dense subset such that there exists a sequence $(n_k)_{k\in\NN}$ of integers with $T^{n_k}x \to x$ for all $x \in Y$. Then any $z \in Z := \lspan(Y)$ satisfies that $T^{n_k}z \to z$, so $Z \subset X$ is an infinite-dimensional dense vector space contained in $\Rec(T)$.
\end{proof}

\begin{remark}
	Let $T:X\longrightarrow X$ be a quasi-rigid operator with respect to a sequence $(n_k)_{k\in\NN}$ and consider $Y := \left\{ x \in X : T^{n_k}x \to x \text{ as } k\to\infty \right\}$. This is a dense subset of $X$ which can be either of first or second Baire category, depending on the operator $T$:\newpage
	\begin{enumerate}[(a)]
		\item If $T$ is a {\em rigid} operator, then $Y=X$. This is the case of the identity operator, but there also exist examples of rigid systems which are even (Devaney) chaotic (see \cite{EisGri2011}).
		
		\item If $X$ is a Banach space and $Y$ is a second category set, then $\sup_{k \in \NN} \|T^{n_k}\| < \infty$ by the Banach-Steinhaus theorem. Consider the operator $T:=\lambda B$ with $|\lambda|>1$, where $B$ is the (unilateral) {\em backward shift} on $X=c_0(\NN)$ or any $\ell^p(\NN)$ ($1\leq p<\infty$). This operator is weakly-mixing and hence quasi-rigid, but the set $Y$ as defined before has to be of first category since $x=(\frac{1}{n^2})_{n\in\NN} \in X$ has the property that $\|T^n x\| \to \infty$ as $n \to \infty$.
	\end{enumerate}
\end{remark}

Now we give some (usually fulfilled) sufficient conditions for a dynamical system to be quasi-rigid. Recall that a vector $x \in X$ is called {\em cyclic} for an operator $T:X\longrightarrow X$ if
\[
\lspan(\orb(x,T)) = \lspan\{T^nx : n \in \NN_0\} = \left\{ p(T)x : p \text{ polynomial} \right\},
\]
is a dense set in $X$; and an operator $T$ is called {\em cyclic} as soon as it admits a cyclic vector. Moreover, a point $x \in X$ is called {\em periodic} for a map $T$ whenever $T^px=x$ for some positive integer $p\in\NN$. See \cite{BaMa2009_book,GrPe2011_book} for more on cyclicity and periodicity.

\begin{proposition}\label{Pro:sufficient-quasi-rigid}
	Let $(X,T)$ be a dynamical system for which any of the following holds:
	\begin{enumerate}[{\em(a)}]
		\item the set $\Per(T)$ of periodic points for $T$ is dense in $X$;
		
		\item $T$ admits a recurrent vector $x \in \Rec(T)$ for which the following set is dense
		\[
		\{Sx : ST = TS, S:X\longrightarrow X \text{ continuous mapping}\};
		\]
	\end{enumerate}
	then $T$ is quasi-rigid. In particular, a continuous linear operator $T$ acting on a Hausdorff topological vector space $X$ is quasi-rigid whenever any of the following holds:
	\begin{enumerate}[--]
		\item $T$ has dense periodic vectors;
		
		\item $T$ admits a recurrent and cyclic vector;
		
		\item $T$ admits a hypercyclic vector.
	\end{enumerate}
\end{proposition}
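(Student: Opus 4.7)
The plan is to handle the two general conditions (a) and (b) separately, then derive the three linear consequences as immediate corollaries.

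For (a), the key observation is that a single ``universal'' sequence works for every periodic point simultaneously. I will take $n_k := k!$ (which is strictly increasing from $k=1$) and set $Y := \Per(T)$, which is dense by hypothesis. For any $x \in Y$ with period $p \in \NN$, as soon as $k \geq p$ we have $p \mid k!$, hence $T^{n_k}x = x$. So the sequence $(T^{n_k}x)_{k\in\NN}$ is eventually stationary at $x$, in particular $T^{n_k}x \to x$. This shows $T$ is quasi-rigid with respect to $(n_k)$. Note that no countability assumption on $X$ is needed here, since we never enumerate a dense set.

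For (b), let $x \in \Rec(T)$ be as in the hypothesis, so there is a strictly increasing sequence $(n_k)_{k\in\NN}$ with $T^{n_k}x \to x$. For any continuous map $S : X \longrightarrow X$ commuting with $T$ (hence with every $T^{n_k}$), the continuity of $S$ yields
\[
T^{n_k}(Sx) = S(T^{n_k}x) \longrightarrow Sx \quad \text{as } k \to \infty.
\]
Taking $Y := \{Sx : S:X\longrightarrow X \text{ continuous},\ ST=TS\}$, which is dense by assumption, gives quasi-rigidity with respect to the same sequence $(n_k)$.

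The three linear statements then follow. ``Dense periodic vectors'' is just the linear instance of (a). ``Recurrent and cyclic vector'' fits (b): if $x \in \Rec(T)$ is cyclic, then $\lspan(\orb(x,T)) = \{p(T)x : p \text{ polynomial}\}$ is dense in $X$, and each $p(T)$ is a continuous linear operator commuting with $T$, so the hypothesis of (b) holds. Finally, any hypercyclic vector $x$ is both cyclic (its orbit, hence the span of its orbit, is dense) and recurrent (since $Tx$ is also hypercyclic, the set $\{T^{n+1}x : n\geq 0\} = \orb(Tx,T)$ is dense, so $x \in \cl{\orb(Tx,T)}$), which reduces to the previous case.

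I do not anticipate a serious obstacle: the only mildly subtle point is picking the universal factorial sequence in (a) (so one avoids any appeal to second countability when enumerating a countable dense subset of $\Per(T)$), and recognizing in (b) that commutativity plus continuity of $S$ automatically transfers the convergence $T^{n_k}x\to x$ to all vectors $Sx$.
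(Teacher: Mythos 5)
Your proof is correct and takes essentially the same route as the paper's: the factorial sequence $(k!)_{k\in\NN}$ with $Y=\Per(T)$ for (a), the commutation-plus-continuity argument $T^{n_k}(Sx)=S(T^{n_k}x)\to Sx$ for (b), and the reduction of the cyclic and hypercyclic cases to (b). The extra details you supply (why hypercyclic vectors are recurrent and cyclic) are consistent with what the paper leaves implicit.
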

\begin{proof}
	To check (a) consider $Y=\Per(T)$ and $(n_k)_{k\in\NN}=(k!)_{k\in\NN}$. For (b) consider
	\[
	Y=\{Sx : ST = TS, S:X\longrightarrow X \text{ continuous mapping}\}.
	\]
	Indeed, since $x$ is recurrent there exists $(n_k)_{k\in\NN}$ such that $T^{n_k}x\to x$, but then, given any point $y=Sx \in Y$ we get that $T^{n_k}(Sx) = S(T^{n_k}x) \to Sx$. Finally, recall that every recurrent and cyclic (and hence every hypercyclic) vector fulfills condition (b).
\end{proof}

Proposition~\ref{Pro:sufficient-quasi-rigid} together with \cite[Corollary~9.2]{CoMaPa2014} show that recurrence plus cyclicity imply quasi-rigidity when $X$ is a complex Banach space. This result extends to (real and complex) linear dynamical systems as defined in the Introduction via the following lemma, in which the concept of {\em complexification} is used (see Remark~\ref{Rem:complexification}). We write $\TT = \{ z \in \CC : |z|=1 \}$.

\begin{lemma}\label{Lem:poly-dense-range}
	Let $T$ be a topologically recurrent continuous linear operator acting on a (real or complex) Hausdorff topological vector space $X$. The following statements hold:
	\begin{enumerate}[{\em(a)}]
		\item For every (real or complex) number $\lambda \in \KK\setminus\TT$ the operator $(T-\lambda)$ has dense range.
		
		\item If $X$ is a real space, then for every $\lambda \in \CC\setminus\TT$ the complexified operator $(\til{T}-\lambda)$, acting on the complexification $\til{X}$ of $X$, has dense range.
	\end{enumerate}
	As a consequence, for any (real or complex) polynomial $p$ without unimodular roots, the operator $p(T)$ has dense range on the (real or complex) space $X$.
\end{lemma}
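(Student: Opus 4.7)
The plan is to prove (a), (b), and the consequence in sequence, using polynomial identities in $T$ combined with topological recurrence. For \textbf{(a)}, I will argue by contradiction: assume that some non-empty open $V \subset X$ is disjoint from $(T-\lambda I)X$; fix $v_0 \in V$ and a balanced open neighbourhood $W$ of $0$ with $v_0 + W \subset V$, together with a smaller balanced $W' \subset W$ satisfying $W' + W' + W' \subset W$. The two identities
\[
T^n - \lambda^n I = (T-\lambda I)\,Q_n(T), \qquad I - T^n/\lambda^n = (T-\lambda I)\bigl[-Q_n(T)/\lambda^n\bigr] \quad (\lambda \neq 0),
\]
with $Q_n(T) = \sum_{k=0}^{n-1} \lambda^{n-1-k} T^k$, show that $(T^n - \lambda^n I)X \subset (T-\lambda I)X$, so $V$ remains disjoint from this smaller range for every $n$. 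Applying topological recurrence to $v_0 + W'$ yields arbitrarily large $n$ and $v' \in v_0 + W'$ with $T^n v' \in v_0 + W'$. When $|\lambda| < 1$, $(T-\lambda I)[Q_n(T)v'] = T^n v' - \lambda^n v'$ lies in $v_0 + W' + W' + W' \subset v_0 + W \subset V$ for large $n$ (the three small terms being $T^n v' - v_0 \in W'$, $-\lambda^n v_0 \in W'$ by continuity of scalar multiplication, and $-\lambda^n(v'-v_0) \in W'$ since $|\lambda^n| \leq 1$ and $W'$ is balanced): contradiction. When $|\lambda| > 1$ the second identity gives $v' - T^n v'/\lambda^n$, and an entirely analogous estimate—now using $\lambda^{-n} \to 0$ and $T^n v' \in v_0 + W'$—places the element in $V$. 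The case $\lambda = 0$ is immediate, since $T^n v' \in V \cap TX$ for any $n \geq 1$.

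For \textbf{(b)}, the main obstacle is that topological recurrence of $T$ on $X$ does \emph{not} imply topological recurrence of $\til T$ on $\til X$: as a real dynamical system, $\til T$ is $T \oplus T$, whose recurrence is precisely the quasi-rigidity condition whose failure is constructed in Section~\ref{Sec:3n-qr}. I plan to circumvent this by using linearity of $\til T - \lambda I$ together with the decomposition $\til v_0 = v_0 + i w_0$: find preimages of $v_0$ and of $i w_0$ \emph{separately} from the recurrence of $T$ at $v_0$ and at $w_0$, and then recombine. Concretely, assume $\til V = \til v_0 + (W_1 + i W_2) \subset \til X$ is basic open and disjoint from $(\til T - \lambda I)\til X$; choose a balanced open $W' \subset W_1 \cap W_2$ small enough that the several-fold iterated sums arising below remain inside $W_1$ and $W_2$. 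By topological recurrence of $T$, pick $n_1$ and $v'_1 \in v_0 + W'$ with $T^{n_1} v'_1 \in v_0 + W'$, and independently $n_2$ and $w'_2 \in w_0 + W'$ with $T^{n_2} w'_2 \in w_0 + W'$. Setting
\[
\til x := \til Q_{n_1}(\til T)\,v'_1 + \til Q_{n_2}(\til T)(i w'_2) \qquad (\text{when } |\lambda| < 1),
\]
or the variant with factors $-1/\lambda^{n_j}$ inserted when $|\lambda| > 1$, linearity yields $(\til T - \lambda I)\til x$ as a sum of two contributions. Expanding $\lambda^{n_j} = A_{n_j} + i B_{n_j}$ (respectively $\lambda^{-n_j} = A'_{n_j} + i B'_{n_j}$) and separating real from imaginary parts produces, in the case $|\lambda|<1$, the expressions $T^{n_1}v'_1 - A_{n_1}v'_1 + B_{n_2}w'_2$ for the real part and $-B_{n_1}v'_1 + T^{n_2}w'_2 - A_{n_2}w'_2$ for the imaginary part (and entirely analogous expressions when $|\lambda|>1$), which for $n_1, n_2$ large enough lie in $v_0 + W_1$ and $w_0 + W_2$ respectively: every error is either a recurrence error in $W'$, or a scalar $A_{n_j}, B_{n_j}, A'_{n_j}, B'_{n_j} \to 0$ times one of the fixed vectors $v_0, w_0, v'_1, w'_2, T^{n_1}v'_1, T^{n_2}w'_2$, and is therefore absorbed by $W'$. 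Hence $(\til T - \lambda I)\til x \in \til V$, contradicting the assumption.

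The consequence follows by factoring $p$ (over $\CC$, or over $\RR$ into linear and irreducible-quadratic factors): $p(t) = c \prod_i (t - \alpha_i) \prod_j (t - \lambda_j)(t - \bar\lambda_j)$ in the real case, with every root non-unimodular. Each linear factor $(T - \alpha_i I)$ with $|\alpha_i| \neq 1$ has dense range on $X$ (or on $\til X$, depending on the field) by (a). For a real irreducible-quadratic factor $p_\lambda(t) = t^2 - 2(\mathrm{Re}\,\lambda)t + |\lambda|^2$ coming from a pair $\lambda, \bar\lambda \in \CC \setminus \TT$, the complexification $\til{p_\lambda(T)} = (\til T - \lambda I)(\til T - \bar\lambda I)$ has both factors of dense range on $\til X$ by (b); a routine fact (if $A_1, A_2$ are continuous linear maps with dense range, then $\overline{A_1 A_2 X} \supset A_1 X$ is itself dense) gives $\til{p_\lambda(T)}\til X$ dense in $\til X$. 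Since $\til{p_\lambda(T)}\til X = p_\lambda(T)X + i\,p_\lambda(T)X$, this is equivalent to $p_\lambda(T)X$ being dense in $X$. Combining the commuting factors of $p(T)$ via the same composition fact finally yields that $p(T)$ has dense range on $X$ (or on $\til X$), completing the consequence.
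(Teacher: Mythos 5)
Your proposal is correct, and for part (a) it takes a genuinely different route from the paper. The paper argues via Wengenroth's quotient technique: it sets $E:=\cl{(T-\lambda)(X)}$, passes to the Hausdorff quotient $X/E$, observes that the induced operator is scalar multiplication by $\lambda$ and inherits topological recurrence, and then shows a scalar multiple with $|\lambda|<1$ cannot be topologically recurrent (the case $|\lambda|>1$ is reduced to this by inverting, citing an external fact that topological recurrence passes to inverses). You instead work directly in $X$ with the factorization $T^{n}-\lambda^{n}I=(T-\lambda I)Q_n(T)$ and use recurrence at $v_0$ plus $\lambda^{n}\to 0$ (or $\lambda^{-n}\to 0$) to place an explicit element of the range inside any prescribed open set; this is self-contained, avoids the quotient construction and the inverse lemma, and your bookkeeping with the balanced neighbourhoods $W'\subset W$ is sound. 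For part (b) the paper only gives a pointer to \cite[Exercise 12.2.6]{GrPe2011} with the hint that $\til{T}^n(U+iU)\cap(U+iU)\neq\varnothing$ for infinitely many $n$; you correctly identify the real obstacle --- that $\til{T}$ is $T\oplus T$ as a topological system and hence need not itself be topologically recurrent, precisely the failure constructed in Section~\ref{Sec:3n-qr} --- and your workaround (splitting $\til v_0=v_0+iw_0$, using two \emph{independent} recurrence times $n_1,n_2$ for the real and imaginary contributions, and absorbing the cross-terms $B_{n_2}w_2'$ and $B_{n_1}v_1'$ as vanishing scalars times vectors trapped in $v_0+W'$ or $w_0+W'$) is valid and arguably more informative than the reference the paper gives. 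The deduction of the consequence (factoring $p$, the composition-of-dense-ranges fact $\cl{A_1A_2X}\supseteq A_1(\cl{A_2X})=A_1X$, and the identity $p(\til T)=p(T)+ip(T)$ for real coefficients) matches the paper's, with your treatment of conjugate quadratic factors being slightly more explicit. The only cosmetic omission is the case $\lambda=0$ in (b), which follows at once from (a) since $\til T(\til X)=TX+iTX$.
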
\newpage
The proof of this lemma is based on \cite[Lemma 12.13]{GrPe2011_book}, a well-known result of Wengenroth for topologically transitive operators acting on Hausdorff topological vector spaces. We just show statement (a), since (b) follows exactly as in \cite[Exercise 12.2.6]{GrPe2011_book} ({\em Hint}: when $X$ is a real space define the {\em complexification} $\til{T}:\til{X}\longrightarrow \til{X}$ as it is done in Remark~\ref{Rem:complexification} for operators on F-spaces, and use the fact that for each non-empty open subset $U \subset X$ there are infinitely many positive integers $n \in \NN$ with $\til{T}^n(U+iU)\cap(U+iU)\neq\varnothing$).
\begin{proof}[Proof of Lemma~\ref{Lem:poly-dense-range}]
	Given $\lambda \in \KK\setminus\TT$ let $E:=\cl{(T-\lambda)(X)}$, which is a closed subspace of $X$, and suppose that $E\neq X$. The quotient space $X/E$ is then a (real or complex) Hausdorff topological vector space (since $E$ is closed). If $q:X\longrightarrow X/E$ is the quotient map we have that $q((T-\lambda)x)=0$ and $q(Tx) = \lambda q(x)$ for all $x\in X$. The operator $S$ on $X/E$ given by $S[x] = \lambda[x]$ is easily seen to be topologically recurrent since so is $T$.\\[-5pt]
	
	However, if $|\lambda|<1$ we can choose $[x] \in X/E\setminus\{0\}$ and a balanced $0$-neighbourhood $W$ such that $[x] \notin \cl{W}$. Since $\lambda^n[x] \to 0$ and $\lambda W \subset W$ there exists a neighbourhood $U$ of $[x]$ and a natural number $N \in \NN$ for which $U\cap W=\varnothing$ and $\lambda^nU \subset W$ for every $n\geq N$. Hence $S^n(U) \cap U = \varnothing$ for all $n\geq N$ so $S$ is not topologically recurrent, which is a contradiction. If $|\lambda|>1$ we can consider the map $S^{-1}:X/E\longrightarrow X/E$ for which $S^{-1}[x]=\lambda^{-1}[x]$. By the previous argument $S^{-1}$ is not topologically recurrent, but then $S$ is not topologically recurrent (see \cite[Section 6]{GriLo2023}), which is again a contradiction.\\[-5pt]
	
	Let now $p$ be any (real or complex) polynomial without unimodular roots: if $X$ is a complex space then $p(T)$ has dense range since, splitting the polynomial, $p(T)$ can be written as a composition of dense range operators by (a); and if $X$ is a real space then $p(\til{T})$ also has dense range in $\til{X}$ via the same argument as above but using (b). Finally, if $p$ has real coefficients, then $p(\til{T})=p(T)+ip(T)$ and $p(T)$ has dense range in the real space $X$.
\end{proof}

The proof of \cite[Corollary 9.2]{CoMaPa2014} is highly based on the following result of D. Herrero: {\em given an operator $T$ on a complex Banach space $X$, if $\sigma_p(T^*)$ has empty interior then the set of cyclic vectors for $T$ is dense, and hence it is a dense $G_{\delta}$-set} (see \cite{Herrero1979}). By using Lemma~\ref{Lem:poly-dense-range} we obtain this density result in the more general setting of operators on F-spaces:

\begin{corollary}[\textbf{Extension of \cite[Corollary 9.2]{CoMaPa2014}}]\label{Cor:Rec+Cyc}
	Let $(X,T)$ be a (real or complex) linear dynamical system. If $T$ is recurrent and cyclic then the set of cyclic vectors for $T$ is a dense $G_{\delta}$-set. In particular, if $T$ is recurrent and cyclic, then it is quasi-rigid.
\end{corollary}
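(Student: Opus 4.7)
The plan is to follow Herrero's classical scheme, replacing his spectral hypothesis by the dense-range information provided by Lemma~\ref{Lem:poly-dense-range}. First, the set $\text{Cyc}(T)$ of cyclic vectors is a $G_\delta$ subset of $X$: if $(U_n)_{n\in\NN}$ is a countable base of open sets for $X$, then $x\in \text{Cyc}(T)$ precisely when for each $n \in \NN$ there exists a polynomial $p$ with $p(T)x \in U_n$, so that
\[
\text{Cyc}(T) = \bigcap_{n\in\NN} \bigcup_{p} p(T)^{-1}(U_n),
\]
where each $p(T)^{-1}(U_n)$ is open by continuity of $p(T)$. Once density of $\text{Cyc}(T)$ is proved, Baire's theorem applied to the dense $G_\delta$ sets $\text{Cyc}(T)$ and $\Rec(T)$ yields a vector that is simultaneously recurrent and cyclic, and the final ``in particular'' clause then follows from Proposition~\ref{Pro:sufficient-quasi-rigid}.

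The crux is therefore to establish density of $\text{Cyc}(T)$. I would fix once and for all a cyclic vector $x_0 \in X$ and prove two ingredients: \emph{(a)} if $p$ is any (real or complex, depending on $\KK$) polynomial without unimodular roots, then $p(T)x_0$ is again cyclic for $T$; and \emph{(b)} the set $\{p(T)x_0 : p \text{ polynomial without unimodular roots}\}$ is dense in $X$. For \emph{(a)}, observe that $\{q(T)(p(T)x_0) : q \text{ polynomial}\} = p(T)\{q(T)x_0 : q \text{ polynomial}\}$; given any non-empty open $V \subset X$, Lemma~\ref{Lem:poly-dense-range} applied to $p$ furnishes $w \in X$ with $p(T)w \in V$, continuity of $p(T)$ gives a neighborhood $W$ of $w$ with $p(T)(W) \subset V$, and cyclicity of $x_0$ produces some $q$ with $q(T)x_0 \in W$, so that $(pq)(T)x_0 = p(T)q(T)x_0 \in V$.

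For \emph{(b)}, I would argue by perturbing roots: any polynomial $q$ factors over $\CC$ as $q(z) = c\prod_j(z - \alpha_j)$ and each unimodular root $\alpha_j$ can be replaced by $(1+\delta)\alpha_j$ for an arbitrarily small $\delta > 0$, where in the real case non-real unimodular roots come in conjugate pairs and both members of a pair are perturbed symmetrically so that the resulting polynomial remains real. The coefficients of the perturbed polynomial $p$ are then arbitrarily close to those of $q$, and, using a translation-invariant metric defining the topology of $X$, it follows that $p(T)x_0$ is arbitrarily close to $q(T)x_0$. Since cyclicity of $x_0$ makes vectors of the form $q(T)x_0$ dense in $X$, the vectors $p(T)x_0$ with $p$ having no unimodular roots are dense as well, and combined with \emph{(a)} this proves the density of $\text{Cyc}(T)$. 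The main obstacle I foresee is the careful bookkeeping of the real/complex dichotomy in \emph{(b)}, which however becomes routine once unimodular complex-conjugate pairs are perturbed symmetrically; the genuinely substantial input is Lemma~\ref{Lem:poly-dense-range}, which replaces Herrero's spectral hypothesis and makes the whole argument go through in the general F-space setting rather than only on complex Banach spaces.
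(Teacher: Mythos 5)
Your proposal is correct and follows essentially the same route as the paper: both arguments reduce density of the cyclic vectors to perturbing the roots of a polynomial $p$ with $p(T)x_0$ in a prescribed open set so that the perturbed polynomial avoids the unit circle, then invoke Lemma~\ref{Lem:poly-dense-range} to see that the perturbed image of the cyclic vector is again cyclic, and finish with Baire plus Proposition~\ref{Pro:sufficient-quasi-rigid}. The only difference is organizational (you separate the argument into cyclicity of $p(T)x_0$ and density of such vectors, and you spell out the root-perturbation and the $G_\delta$ formula in more detail than the paper does).
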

\begin{proof}
	The set of cyclic vectors is always a $G_{\delta}$-set. To check its density let $x \in X$ be a cyclic vector for $T$ and pick a non-empty open set $U \subset X$. Then there is a (real or complex) polynomial $p$ such that $p(T)x \in U$. Splitting $p$ we have that
	\[
	\textstyle p(T) = \prod_{j=1}^N (T-\lambda_j) \quad \text{ for some } (\lambda_j)_{j=1}^N \subset \CC.
	\]
	Choose a small perturbation $(\mu_j)_{j=1}^N \subset \CC\setminus\TT$ of $(\lambda_j)_{j=1}^N \subset \CC$ with the property that the polynomial
	\[
	\textstyle \widehat{p}(T) := \prod_{j=1}^N (T-\mu_j) \quad \text{ is still such that } \widehat{p}(T)x \in U.
	\]
	Note that if $p$ has real coefficients, then $(\mu_i)_{i=1}^N \subset \CC\setminus\TT$ can be chosen such that $\widehat{p}$ has still real coefficients. Finally, $\widehat{p}(T)x$ is a cyclic vector for $T$ since
	\[
	\widehat{p}(T) \left( \{ q(T)x : q \text{ polynomial} \} \right) = \{ q(T)(\widehat{p}(T)x) : q \text{ polynomial} \}
	\]
	is a dense set by Lemma~\ref{Lem:poly-dense-range}. Then $\Rec(T)$ and the set of cyclic vectors for $T$ are both residual, so $T$ admits a recurrent and cyclic vector. Proposition~\ref{Pro:sufficient-quasi-rigid} ends the proof.
\end{proof}

\section{Existence of recurrent but not quasi-rigid operators}\label{Sec:3n-qr}

After having studied in Section~\ref{Sec:2qr} the dynamical systems $(X,T)$ for which all their $N$-fold direct sums $(X^N,T_{(N)})$ are recurrent, we are now ready to answer negatively Question~\ref{Q:T+T-rec} showing that there exist recurrent but not quasi-rigid operators. If for the moment we continue working in the (not necessarily linear) {\em dynamical systems} setting, the following are natural questions in view of the theory already developed in the previous section:
\begin{question}\label{Q:T+T->qr}
	Let $(X,T)$ be a recurrent dynamical system:
	\begin{enumerate}[(a)]
		\item Is $(X\times X,T\times T)$ again a recurrent dynamical system?
		
		\item Assume that $(X\times X,T\times T)$ is recurrent. Is then $(X,T)$ quasi-rigid?
	\end{enumerate}
\end{question}
If we change the ``recurrence'' assumption into that of ``topological transitivity'', and in (b) we also exchange ``quasi-rigid'' for ``weakly-mixing'', then the respective answers are:
\begin{enumerate}[(a)]
	\item \textbf{No}, and in particular:
	\begin{enumerate}[--]
		\item for non-linear systems consider any irrational rotation, see \cite[Example~1.43]{GrPe2011_book};
		
		\item for linear maps we already mentioned in the Introduction the references \cite{DeRead2009,BaMa2007nwm,BaMa2009nwm}.
	\end{enumerate}
	
	\item \textbf{Yes}, in both linear and non-linear cases $(X^N,T_{(N)})$ is transitive for every $N \in \NN$ if and only if $T\times T$ is transitive (see \cite[Theorem~1.51]{GrPe2011_book}). 
\end{enumerate}

However, the answer to both questions is \textbf{no} in the recurrence setting, and indeed: in \cite[Lemma~9 and Example~4]{Banks1999} it is shown that for each $N \in \NN$ there exists a \textbf{non-linear dynamical system} $f:X\longrightarrow X$ on a \textbf{compact metric space} $X$ for which
\[
(X^N,f_{(N)}) \text{ is recurrent }\quad \text{ while } \quad (X^{N+1},f_{(N+1)}) \text{ is not recurrent}.
\]
This construction is \textbf{highly non-linear} since each compact space $X$ is the disjoint union of $N+1$ {\em sub-shifts} of the well-known {\em shift on two symbols}. It turns out that the answer to Question~\ref{Q:T+T->qr} in the framework of Linear Dynamics is also negative. Let us state the main result of this section:

\begin{theorem}\label{The:N-counter-example}
	Let $X$ be any (real or complex) separable infinite-dimensional Banach space. For each $N \in \NN$ there exists an operator $T \in \Lc(X)$ such that
	\[
	T_{(N)}:X^N\longrightarrow X^N \text{ is recurrent, and even } \Rec(T_{(N)})=X^N,
	\]
	but for which the operator $T_{(N+1)}:X^{N+1}\longrightarrow X^{N+1}$ is not recurrent any more.
\end{theorem}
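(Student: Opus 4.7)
The plan is to construct $T$ explicitly on a model space and then transfer to an arbitrary $X$. The model space (say a separable infinite-dimensional Banach space equipped with a convenient basic sequence) is decomposed as $\bigoplus_{k \in \NN} E_k$ into finite-dimensional blocks, and $T$ is defined blockwise by setting $T_k := T|_{E_k}$ to be a rotation-type operator — for instance, multiplication by $e^{2\pi i \theta_k}$ composed with a cyclic permutation of a fixed basis of $E_k$ — with a carefully tuned sequence of phases $(\theta_k)_{k \in \NN}$. The design principle is number-theoretic: the phases will be chosen so that for any finite selection of $N$ blocks the integer orbit $(n\theta_{k_1}, \ldots, n\theta_{k_N}) \in \TT^N$ accumulates at the identity, whereas some specific $N+1$ blocks carry phases yielding an orbit in $\TT^{N+1}$ whose closure is a closed subgroup missing a fixed neighbourhood of the identity — a codimension-one Diophantine obstruction that is triggered precisely at level $N+1$ but not below.

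To verify $\Rec(T_{(N)}) = X^N$, I would first approximate any $N$-tuple $(x_1, \dots, x_N) \in X^N$ by a tuple supported on finitely many blocks (up to $\eps$), and then invoke the design to produce an integer $n$ making $T^n$ simultaneously close to the identity on all blocks meeting the support. A standard inherited-error argument then transfers the return to the original tuple up to $2\eps$, yielding recurrence of every $N$-tuple. To verify that $T_{(N+1)}$ is not recurrent, I would exhibit a concrete $(N+1)$-tuple $(x_1^*, \ldots, x_{N+1}^*)$ whose components are dominantly supported on the distinguished $N+1$ blocks; the designed obstruction should ensure that, for a small enough neighbourhood $U$ of $(x_1^*, \dots, x_{N+1}^*)$, the iterates $T^n_{(N+1)}(x_1^*, \ldots, x_{N+1}^*)$ stay outside $U$ for all but finitely many $n$. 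Finally, to pass from the model to an arbitrary separable infinite-dimensional Banach space $X$, I would realise the blockwise construction inside $X$ via a normalised basic sequence with good projection constants, extending $T$ in a controlled way on any complement (for instance by another copy of the same blockwise scheme, rather than by the identity, which would trivially restore $(N+1)$-fold recurrence) so that the obstruction at level $N+1$ is preserved.

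The main obstacle, I expect, is threading the needle between two competing constraints: enforcing the very strong rigidity-like condition $\Rec(T_{(N)}) = X^N$ (not merely dense recurrence in $X^N$) while simultaneously destroying even topological recurrence of $T_{(N+1)}$. Uniformly rigid or generic rotation-based constructions typically yield $\Rec(T_{(M)}) = X^M$ for \emph{every} $M \in \NN$, so the phases $(\theta_k)_{k \in \NN}$ cannot be chosen generically; their exact arithmetic relations must produce a codimension-one obstruction that kicks in exactly at level $N+1$. Pinning this down will almost certainly require an inductive bookkeeping procedure on the phases and block dimensions; moreover, verifying that the non-recurrence of the chosen $(N+1)$-tuple is robust under small perturbations (so as to defeat \emph{topological} recurrence of $T_{(N+1)}$, not just the recurrence of the single test tuple) is the subtlest point, and will presumably force the distinguished $N+1$ blocks to dominate an open set of $(N+1)$-tuples in a quantitative way.
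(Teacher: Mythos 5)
Your plan cannot work, for a structural reason that is independent of how cleverly the phases $(\theta_k)_{k\in\NN}$ are tuned. If $T$ is block-diagonal with each block $T|_{E_k}$ equal to a unimodular scalar times a permutation of a basis (or, more generally, any invertible power-bounded operator on a finite-dimensional space), then each block is diagonalizable with unimodular eigenvalues, so every vector supported on finitely many blocks lies in $\lspan(\Ec(T))$ and is therefore $\Delta^*$-recurrent; since $\Delta^*$ is a filter, \emph{every} $M$-tuple of such vectors is recurrent for $T_{(M)}$, for every $M\in\NN$, and $T$ is quasi-rigid (it has a dense set of recurrent vectors with finite-dimensional orbit span, cf.\ Lemma~\ref{Lem:finite.dimension} and Theorem~\ref{The:dense.lineability}). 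In your own language: the forward orbit of the identity under a rotation of $\TT^{N+1}$ \emph{always} accumulates at the identity --- its closure is a closed subgroup, and a closed subgroup contains $0$; by Dirichlet's simultaneous approximation theorem the return set $\{n\in\NN:\ \textup{dist}(n\theta_j,\ZZ)<\eps \text{ for } 1\leq j\leq N+1\}$ is infinite (indeed syndetic) for \emph{every} choice of phases. So the ``codimension-one Diophantine obstruction triggered precisely at level $N+1$'' on which your whole construction hinges does not exist, and replacing cyclic permutations by other isometric or power-bounded blocks does not help. (You correctly note that extending by the identity on a complement would restore $(N+1)$-fold recurrence, but your own blockwise scheme restores it just as surely.)

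The proof in the paper has to break block-diagonality, and does. The operator of Section~\ref{Sec:3n-qr} is $T=R+F$, where $R$ is a diagonal rotation rigid along a sequence $(m_k)$, and $F=\sum_{k\geq N+2} m_{k-1}^{-1}\ep{w_k^*}{P\,\cdot}\,e_k$ is a non-normal coupling term feeding the first $N+1$ coordinates into the far-away ones; in $T^n$ these couplings acquire coefficients $\lambda_{k,n}/m_{k-1}$ which become of order $1$ for suitable $n$ (Facts~\ref{Fact:T.power.n} and \ref{Fact:lambda_k_n}). The reason the obstruction appears exactly at level $N+1$ is then a rank/pigeonhole argument in the $(N+1)$-dimensional space $E=\lspan\{e_1^*,\dots,e_{N+1}^*\}$: given any $N$ vectors one can choose a unit functional $w^*\in E$ annihilating all $N$ projections (a homogeneous system with $N$ equations and $N+1$ unknowns) and approximate it by the dense sequence $(w_k^*)$, which yields $\Rec(T_{(N)})=X^N$; whereas no unit functional of $E$ can be small on all of $e_1,\dots,e_{N+1}$ simultaneously (Fact~\ref{Fact:find.i}), which defeats the simultaneous return of the $N+1$ balls $B(e_i,\eps)$ and hence the topological recurrence of $T_{(N+1)}$. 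Any successful construction must contain some such non-diagonal interaction between coordinates; pure rotations never can.
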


Considering $N=1$ we obtain examples of recurrent operators $T \in \Lc(X)$ for which $T\oplus T$ is not recurrent. Thus we get a negative answer to the {\em $T\oplus T$-recurrence problem}, i.e.\ we answer \cite[Question~9.6]{CoMaPa2014} and hence Question~\ref{Q:T+T-rec} in the negative.\\[-5pt]

Our proof of Theorem~\ref{The:N-counter-example} \textbf{relies heavily} on a construction of Aug\'e in \cite{Auge2012} of operators $T$ on (infinite-dimensional, separable) Banach spaces $X$ with \textbf{wild dynamics}: the two sets
\[
A_T = \left\{ x \in X : \lim_{n\to\infty} \|T^nx\| = \infty \right\} \quad \text{ and } \quad B_T = \left\{ x \in X : \liminf_{n\to\infty} \|T^nx-x\|=0 \right\}
\]
have non-empty interior and form a partition of $X$. Since $B_T=\Rec(T)$, these operators have plenty (but not a dense set) of recurrent vectors. In the remainder of this section we prove the \textbf{complex} version of Theorem~\ref{The:N-counter-example} by modifying the construction of \cite{Auge2012}. The \textbf{real} case follows in a really similar way by using the same arguments as in \cite[Section~3.2]{Auge2012}.

\subsection{Necessary prerequisites}

Assume that $X$ is a complex separable infinite-dimensional Banach space, denote by $X^*$ its {\em topological dual space} and by $B_{X^*}$ the {\em unit ball} of such a space $X^*$. Moreover, given $(x,x^*) \in X\times X^*$ we will denote by $\ep{x^*}{x} = x^*(x)$ the dual evaluation. As in \cite{Auge2012}, our operator will be built using a bounded biorthogonal  system of $X$. The following result is well-known (see \cite[Vol I, Section 1.f]{LindTzaf1977_book} or \cite{OvsPel}):

\begin{enumerate}[--]
	\item {\em Given a separable Banach space $X$ one can find $(e_k,e_k^*)_{k\in\NN} \subset X\times X^*$ such that:
		\begin{itemize}
			\item $\lspan\{ e_k : k \in \NN \}$ is dense in $X$;
			
			\item $\ep{e_k^*}{e_j} = \delta_{k,j}$ where $\delta_{k,j}=0$ if $k\neq j$ and $1$ if $k=j$;
			
			\item for each $k \in \NN$ we have that $\|e_k\| = 1$, and $K := \sup_{k\in\NN} \|e_k^*\|^* < \infty$.
		\end{itemize}}
\end{enumerate}

Once we have fixed a sequence $(e_k,e_k^*)_{k\in\NN} \subset X\times X^*$ with the previous properties we set
\[
c_{00} := \lspan\{ e_k : k \in \NN \}.
\]
Given $x \in X$ we will write $x_k := \ep{e_k^*}{x}$ for each $k\in\NN$, and we will repeatedly use that
\begin{equation}\label{eq:1K.norma.x}
	\|x_k e_k\| \leq K \|x\| \quad \text{ for each } k \in \NN,
\end{equation}
since
\[
\|x\| := \sup_{x^* \in B_{X^*}} |\ep{x^*}{x}| \geq \frac{|\ep{e_k^*}{x}|}{\|e_k^*\|} = \frac{|x_k|}{\|e_k^*\|} = \frac{\|x_k e_k\|}{\|e_k^*\|} \geq \frac{\|x_k e_k\|}{K},
\]\\
for each $k \in \NN$, and hence that
\begin{equation}\label{eq:2K.eps}
	\|x-y\|<\eps \ \text{ implies } \ |x_k-y_k|<K\eps \quad \text{ for all } x,y \in X \text{ and } k \in \NN.
\end{equation}

\begin{remark}
	Recall that, if the sequence $(e_k)_{k\in\NN}$ is not a Schauder basis of $X$ then a vector $x \in X$ cannot be written  in general as a convergent series $x = \sum_{k\in\NN} x_k e_k$. However, for the vectors in $c_{00} = \lspan\{ e_k : k \in \NN \}$ this equality is always true. Indeed, for each $x \in c_{00}$ there is some $n_x\in\NN$ fulfilling that
	\[
	x = \sum_{k=1}^{n_x} \ep{e_k^*}{x} e_k = \sum_{k=1}^{n_x} x_k e_k = \sum_{k\in\NN} x_k e_k.
	\]\\[-8pt]
\end{remark}

From now on we fix a natural number $N \in \NN$. We are going to construct:
\begin{enumerate}[--]
	\item a {\em projection} $P$ from $X$ into itself;
	
	\item a {\em sequence of functionals} $(w_k^*)_{k\geq N+2} \subset X^*$;
	
	\item two {\em sequences} $(m_k)_{k\in\NN} \in \NN^{\NN}$ and $(\lambda_k)_{k\in\NN} \in \TT^{\NN}$;
	
	\item and an {\em operator} $R$ on $X$.
\end{enumerate}
These auxiliary objects will then be used in \eqref{eq:def.T} to define the final operator $T$ so that $T_{(N)}$ is recurrent while $T_{(N+1)}$ is not. This will finish the proof of Theorem~\ref{The:N-counter-example}.

\subsection{The projection $P$ and the sequences $(w_k^*)_{k\geq N+2}$, $(m_k)_{k\in\NN}$ and $(\lambda_k)_{k\in\NN}$}


Denote by \ $P:X\longrightarrow \lspan\{e_1,e_2,...,e_{N+1}\}$ \ the projection of $X$ onto the linear span of the first $N+1$ basis vectors defined by
\[
Px := \sum_{j=1}^{N+1} \ep{e_j^*}{x} e_j \quad \text{ for every } x \in X.
\]
Note that $P$ is continuous. In fact $\|P\| \leq (N+1)K$. 
Let $E := \lspan\{e_1^*,e_2^*,...,e_{N+1}^*\}$ endowed with the norm $\|\cdot\|^*$ of $X^*$, and consider on $E$ the equivalent norm $\|\cdot\|_{\infty}$ defined in the following way for each $\alpha_1,\alpha_2,...,\alpha_{N+1} \in \CC$,
\[
\left\| \sum_{j=1}^{N+1} \alpha_j e_j^* \right\|_{\infty} := \max\{|\alpha_1|,|\alpha_2|,...,|\alpha_{N+1}|\}.
\]
This makes sense because the vectors $e_1^*,...,e_{N+1}^*$ are linearly independent. There exist constants $M \geq m >0$ such that
\[
m \|w^*\|_{\infty} \leq \|w^*\|^* \leq M \|w^*\|_{\infty} \quad \text{ for every } w^* \in E.
\]
Moreover, since the unit sphere $S_{E_{\infty}} := \{ w^* \in E : \|w^*\|_{\infty} = 1 \}$ is a compact metrizable space there exists a sequence $(w_k^*)_{k\geq N+2} \subset S_{E_{\infty}}$, which is dense in $S_{E_{\infty}}$. We include here a fact which will be used in the sequel:

\begin{fact}\label{Fact:find.i}
	For each $w^* \in S_{E_{\infty}}$ there exists an index $i \in \{1,2,...,N+1\}$ such that the following holds: for every $\eps>0$ and every $x \in X$ with $\|x-e_i\|<\eps$ we have that
	\[
	\left|\ep{w^*}{Px}\right| > 1 - (N+1)K\eps.
	\]
\end{fact}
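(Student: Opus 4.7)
The plan is a short coordinate computation, working in the $\ell^{\infty}$ structure of $E$. First I would write the functional as $w^* = \sum_{j=1}^{N+1} \alpha_j e_j^*$ with $\max_j |\alpha_j| = \|w^*\|_{\infty} = 1$, and then \textbf{select} the index $i \in \{1,2,\ldots,N+1\}$ to be any position at which this maximum is attained, so that $|\alpha_i| = 1$. The natural candidate of a vector near which $\ep{w^*}{Px}$ should stay large is then $e_i$ itself: since $Pe_i = e_i$ and $\ep{e_j^*}{e_i} = \delta_{j,i}$, one immediately obtains $\ep{w^*}{Pe_i} = \alpha_i$, of modulus exactly $1$.

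Next I would control the deviation $\ep{w^*}{Px} - \ep{w^*}{Pe_i}$ coordinate-by-coordinate. Reading off $Px = \sum_{k=1}^{N+1} x_k e_k$ gives $\ep{w^*}{Px} = \sum_{k=1}^{N+1} \alpha_k x_k$, hence
\[
\ep{w^*}{Px} - \alpha_i = \sum_{j=1}^{N+1} \alpha_j \bigl(x_j - \delta_{j,i}\bigr).
\]
The assumption $\|x - e_i\| < \eps$ combined with the strict inequality (\ref{eq:2K.eps}) gives $|x_j - \delta_{j,i}| < K\eps$ for every $j$; since $|\alpha_j| \leq 1$ for each $j$, summing over the $N+1$ indices yields the strict bound $\bigl|\ep{w^*}{Px} - \alpha_i\bigr| < (N+1)K\eps$. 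The reverse triangle inequality then delivers $|\ep{w^*}{Px}| > |\alpha_i| - (N+1)K\eps = 1 - (N+1)K\eps$, which is exactly the claim.

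I do not anticipate a genuine obstacle here: the statement is tailored precisely to the $\|\cdot\|_{\infty}$ structure chosen on $E$. The only point worth keeping in mind is to use the sup-norm bound $|\alpha_j|\leq 1$ directly, rather than passing through the dual norm $\|\cdot\|^*$; the latter would introduce the equivalence constants $M$ and $m$ and spoil the clean constant $(N+1)K$ that the fact asserts. Also, the fact that the inequality in (\ref{eq:2K.eps}) is strict --- even one strict term in the sum suffices --- is what allows the conclusion to be stated with a strict ``$>$'' rather than ``$\geq$''.
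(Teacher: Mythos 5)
Your proof is correct and follows essentially the same route as the paper: choose $i$ with $|\alpha_i|=1$, use \eqref{eq:2K.eps} to bound the coordinates of $x-e_i$, and conclude by the triangle inequality with $|\alpha_j|\leq 1$. The only cosmetic difference is that you bound the deviation $\ep{w^*}{Px}-\alpha_i$ and then apply the reverse triangle inequality, whereas the paper splits off the $i$-th term of $\sum_j \alpha_j x_j$ directly; both give the same constant $(N+1)K\eps$.
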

\begin{proof}
	Since $w^* \in S_{E_{\infty}}$ there are coefficients $(\alpha_j)_{j=1}^{N+1} \in \CC^{N+1}$ such that $w^* = \sum_{j=1}^{N+1} \alpha_j e_j^*$, and there exists at least one index $i \in \{1,2,...,N+1\}$ with $|\alpha_i|=1$. Given $\eps>0$ and $x \in X$ with $\| x - e_i \| < \eps$, the inequality \eqref{eq:2K.eps} implies that $|x_i-1|<K\eps$ while $|x_j|<K\eps$ for every $1\leq j\leq N+1$ with $j\neq i$. Since $0 \leq |\alpha_j| \leq 1$ for every $1\leq j\leq N+1$ we get that
	\[
	\left|\ep{w^*}{Px}\right| = \left| \sum_{j=1}^{N+1} \alpha_j x_j \right| \geq \left|x_i\right| - \sum_{j=1,j\neq i}^{N+1} |\alpha_j| \left|x_j\right| > (1-K\eps) - NK\eps = 1 - (N+1)K\eps.\qedhere
	\]
\end{proof}


Let $(m_k)_{k\in\NN} \in \NN^{\NN}$ be a sequence of positive integers with the following properties:
\begin{enumerate}[(a)]
	\item $m_k \mid m_{k+1}$ for each $k \geq 1$;
	
	\item $m_1 = m_2 = \cdots = m_{N+1} = 1$;
\end{enumerate}
and starting from $k=N+2$, the sequence $(m_k)_{k\geq N+2}$ grows fast enough to satisfy:
\begin{enumerate}[(a)]
	\setcounter{enumi}{2}
	\item $\displaystyle \lim_{j\to\infty} \textstyle \left(m_j \cdot \sum_{k=j+1}^{\infty} \frac{1}{m_k} \right) = 0$.
\end{enumerate}
The sequence $(\lambda_k)_{k\in\NN} \in \TT^{\NN}$ is then defined by setting $\lambda_k := e^{2\pi i \frac{1}{m_k}}$ for each $k \in \NN$. Using the inequality $|e^{i\theta}-1| \leq |\theta|$, which is true for every $\theta \in \RR$, 
and also the fact that $\lambda_1=\lambda_2=\cdots=\lambda_{N+1}=1$, we deduce the inequality
\begin{equation}\label{eq:sum.lambda_k-1}
	\sum_{k=1}^{\infty} |\lambda_k-1| \leq 2\pi \sum_{k=N+2}^{\infty} \frac{1}{m_k} < \infty,
\end{equation}
since condition (c) on the sequence $(m_k)_{k\in\NN}$ implies that the series $\sum_{k=1}^{\infty} \frac{1}{m_k}$ is convergent.

\subsection{The operators $R$ and $T$}

For each $x \in c_{00} = \lspan\{ e_k : k \in \NN \}$, which we write as $x = \sum_{k=1}^{n_x} x_k e_k$, we set
\[
Rx := \sum_{k=1}^{n_x} \lambda_kx_ke_k.
\]
This defines a bounded operator $R$ on $c_{00}$. Indeed, for every $x = \sum_{k=1}^{n_x} x_ke_k \in c_{00}$,
\[
\|Rx\| \leq \|Rx-x\| + \|x\| \leq \sum_{k=1}^{n_x} |\lambda_k-1| \cdot \|x_ke_k\| + \|x\| \leq \left( K \sum_{k=1}^{\infty} |\lambda_k-1| + 1 \right) \|x\|
\]
by \eqref{eq:1K.norma.x}. Using the inequality \eqref{eq:sum.lambda_k-1} we see that the map $R$ extends to a bounded operator on $X$ still denoted by $R$. We now define the operator $T$ on $X$ by setting
\begin{equation}\label{eq:def.T}
	Tx := Rx + \sum_{k=N+2}^{\infty} \frac{1}{m_{k-1}} \ep{w_k^*}{Px} e_k \quad \text{ for every } x \in X.
\end{equation}
The second sum in the expression \eqref{eq:def.T} defines a bounded operator by the assumption (c) on the sequence $(m_k)_{k\in\NN}$, the fact that $\|P\| \leq (N+1)K$ and also that $\|w_k^*\|^* \leq M$ for each $k\geq N+2$. Indeed,
\[
\left\| \sum_{k=N+2}^{\infty} \frac{1}{m_{k-1}} \ep{w_k^*}{Px} e_k \right\| \leq \sum_{k=N+2}^{\infty} \frac{\|w_k^*\|^* \cdot \|Px\|}{m_{k-1}} \leq \left(M \cdot (N+1) K \cdot \sum_{k=N+2}^{\infty} \frac{1}{m_{k-1}}\right) \|x\|,
\]
for every $x \in X$, where the last parenthesis has finite value by assumption (c) on $(m_k)_{k \in \NN}$. It follows that $T$ is a continuous operator on $X$. Let us now compute its $n$-th power:

\begin{fact}[\textbf{Modification of \cite[Lemma 3.5]{Auge2012}}]\label{Fact:T.power.n}
	For every $x \in X$ and $n\geq 1$ we have that
	\[
	T^nx = R^nx + \sum_{k=N+2}^{\infty} \frac{\lambda_{k,n}}{m_{k-1}} \ep{w_k^*}{Px} e_k,
	\]
	where $\lambda_{k,n} := \sum_{l=0}^{n-1} \lambda_k^l = \frac{\lambda_k^n-1}{\lambda_k-1}$ for each $k \geq N+2$.
\end{fact}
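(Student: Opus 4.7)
The plan is a straightforward induction on $n \geq 1$, where the base case $n=1$ is the very definition of $T$ (noting that $\lambda_{k,1} = 1$). For the inductive step, assuming the formula holds for some $n$, I would apply $T$ to $T^n x$ and split the result as
\[
T^{n+1}x = R(T^n x) + \sum_{k=N+2}^{\infty} \frac{1}{m_{k-1}} \ep{w_k^*}{P(T^n x)} e_k,
\]
then simplify both summands using two key algebraic identities.

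The first identity I would establish is $R e_k = \lambda_k e_k$ for every $k$, which is immediate from the definition of $R$ on $c_{00}$ and extends by continuity; this lets me move $R$ through the perturbation sum in $T^n x$, yielding
\[
R(T^n x) = R^{n+1}x + \sum_{k=N+2}^{\infty} \frac{\lambda_k \lambda_{k,n}}{m_{k-1}} \ep{w_k^*}{Px} e_k.
\]
The second identity, which is the conceptual heart of the computation, is $P \circ R^n = P$. This follows from the crucial assumption (b) that $\lambda_1 = \cdots = \lambda_{N+1} = 1$: on $c_{00}$ one has $PR^n x = \sum_{k=1}^{N+1} \lambda_k^n x_k e_k = Px$, and density of $c_{00}$ in $X$ together with continuity of $P$ and $R$ extends this to all of $X$. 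Combined with the fact that the perturbation term in $T^n x$ is supported on $\{e_k : k \geq N+2\} \subset \ker P$, this yields $P(T^n x) = P(R^n x) = P x$.

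Substituting both identities, the coefficient of $\frac{1}{m_{k-1}}\ep{w_k^*}{Px} e_k$ for each $k \geq N+2$ becomes
\[
\lambda_k \lambda_{k,n} + 1 = \lambda_k \sum_{l=0}^{n-1} \lambda_k^l + 1 = \sum_{l=1}^{n} \lambda_k^l + 1 = \sum_{l=0}^{n} \lambda_k^l = \lambda_{k,n+1},
\]
completing the induction. The only obstacle worth flagging is that since $(e_n)_{n\in\NN}$ is merely a bounded biorthogonal system rather than a Schauder basis, one cannot expand a general $x\in X$ as $\sum x_k e_k$; hence the identity $PR^n = P$ must be justified through density of $c_{00}$ and the continuity of both operators rather than by a termwise computation. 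All the infinite series manipulations, in particular swapping $R$ past the sum defining the perturbation, are legitimate because that sum is norm-convergent (as established when $T$ was shown to be bounded) and $R$ is continuous.
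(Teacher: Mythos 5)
Your proposal is correct and follows essentially the same route as the paper: an induction on $n$ whose inductive step hinges on the identities $PR^n=P$ (from $\lambda_1=\cdots=\lambda_{N+1}=1$), $Pe_k=0$ for $k\geq N+2$, and the geometric-sum relation $1+\lambda_k\lambda_{k,n}=\lambda_{k,n+1}$. The only (cosmetic) difference is that you apply the defining formula of $T$ directly to $y=T^nx$ and use $P(T^nx)=Px$, whereas the paper applies $T$ termwise to the inductive expression and kills the resulting double sum via $Pe_k=0$; your added care about extending $PR^n=P$ by density and continuity, since $(e_n)$ need not be a Schauder basis, is a welcome precision.
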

\begin{proof}
	Suppose that the formula holds for some $n\geq 1$. Then
	\begin{align*}
		T^{n+1}x &= TR^nx + \sum_{k=N+2}^{\infty} \frac{\lambda_{k,n}}{m_{k-1}} \ep{w_k^*}{Px} Te_k = R^{n+1}x + \sum_{k=N+2}^{\infty} \frac{1}{m_{k-1}} \ep{w_k^*}{PR^nx} e_k \\[12pt]
		&+ \sum_{k=N+2}^{\infty} \frac{\lambda_{k,n}}{m_{k-1}} \ep{w_k^*}{Px} \left(Re_k + \sum_{j=N+2}^{\infty} \frac{1}{m_{j-1}} \ep{w_j^*}{Pe_k}e_j\right) \\[12pt]
		&= R^{n+1}x + \sum_{k=N+2}^{\infty} \frac{1 + \lambda_{k,n} \cdot \lambda_k}{m_{k-1}} \ep{w_k^*}{Px} e_k = R^{n+1}x + \sum_{k=N+2}^{\infty} \frac{\lambda_{k,n+1}}{m_{k-1}} \ep{w_k^*}{Px} e_k.
	\end{align*}
	In order to obtain these equalities we have used the fact that $PR^nx = Px$ (which follows for every $x \in c_{00}$ from the equalities $\lambda_1 = \lambda_2 = \cdots = \lambda_{N+1} = 1$, and then for every $x \in X$ by the density of $c_{00}$ in $X$), but also the fact that $Pe_k=0$ for all $k\geq N+2$.
\end{proof}

We will also need the following properties regarding the numbers $\lambda_{k,n}$:

\begin{fact}[\textbf{Modification of \cite[Fact 3.6]{Auge2012}}]\label{Fact:lambda_k_n}
	Let $n\geq 1$. Then:
	\begin{enumerate}[{\em(i)}]
		\item $|\lambda_{k,n}| \leq n$ for all $k \geq N+2$;
		
		\item $\lambda_{k,m_n}=0$ whenever $n\geq k\geq N+2$;
		
		\item $|\lambda_{k,n}|\geq \frac{2}{\pi}n > \frac{m_{k-1}}{\pi}$ whenever $k = \min\{ j \geq N+2 : 2n \leq m_j \}$.
	\end{enumerate}
\end{fact}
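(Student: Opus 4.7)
The plan is to exploit the closed-form expression $\lambda_{k,n} = \frac{\lambda_k^n - 1}{\lambda_k - 1}$ (valid because $\lambda_k \neq 1$ for $k \geq N+2$) together with the careful design of the sequences $(m_k)_{k\in\NN}$ and $(\lambda_k)_{k\in\NN}$. All three statements reduce to elementary computations, so the proof is essentially a verification; the only part requiring a genuine estimate is (iii), where a sharp sine inequality is needed.

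Part (i) is immediate from the series representation $\lambda_{k,n} = \sum_{l=0}^{n-1} \lambda_k^l$ together with $|\lambda_k| = 1$ and the triangle inequality. For part (ii), the divisibility condition (a) imposed on $(m_k)_{k\in\NN}$ ensures that $m_k \mid m_n$ whenever $k \leq n$, so
\[
\lambda_k^{m_n} = e^{2\pi i \, m_n / m_k} = 1,
\]
and hence $\lambda_{k,m_n} = \frac{1-1}{\lambda_k - 1} = 0$.

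For part (iii), I would use the trigonometric identity $|e^{i\theta} - 1| = 2|\sin(\theta/2)|$ to rewrite
\[
|\lambda_{k,n}| = \frac{|\lambda_k^n - 1|}{|\lambda_k - 1|} = \frac{|\sin(\pi n / m_k)|}{|\sin(\pi / m_k)|}.
\]
The hypothesis $k = \min\{ j \geq N+2 : 2n \leq m_j\}$ gives $2n \leq m_k$, so $\pi n / m_k \in [0, \pi/2]$, where one can combine $\sin \theta \geq \tfrac{2}{\pi}\theta$ (Jordan's inequality) applied to $\theta = \pi n / m_k$ with $\sin\theta \leq \theta$ applied to $\theta = \pi/m_k$ to obtain
\[
|\lambda_{k,n}| \geq \frac{(2/\pi)(\pi n/m_k)}{\pi/m_k} = \frac{2n}{\pi}.
\]
The minimality of $k$ also yields $2n > m_{k-1}$ (using $m_{N+1}=1$ by condition (b) to handle the boundary case $k = N+2$), which gives the second inequality $\frac{2n}{\pi} > \frac{m_{k-1}}{\pi}$ and completes the proof.

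The only mild obstacle I anticipate is the boundary case $k = N+2$ in part (iii), where $k-1 = N+1$ and we must rely on $m_{N+1} = 1$ to ensure $2n > m_{k-1}$ holds for every $n \geq 1$; outside this case the minimality of $k$ supplies the inequality directly.
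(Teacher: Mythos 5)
Your proof is correct and follows essentially the same route as the paper: the geometric-sum/closed-form expression for $\lambda_{k,n}$, the triangle inequality for (i), the divisibility condition (a) for (ii), and the combination of $\sin\theta\geq\tfrac{2}{\pi}\theta$ on $[0,\tfrac{\pi}{2}]$ with $|\sin\theta|\leq|\theta|$ for (iii). Your explicit treatment of the boundary case $k=N+2$ in (iii), using $m_{N+1}=1$ to get $2n>m_{k-1}$, is a small detail the paper leaves implicit, and it is handled correctly.
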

\begin{proof}
	Each $\lambda_{k,n}$, for $k\geq N+2$, is the geometric sum
	\[
	\textstyle \lambda_{k,n} = \sum_{l=0}^{n-1} \lambda_k^l = \frac{\lambda_k^n-1}{\lambda_k-1} \quad \text{ with } \lambda_k = e^{2\pi i \frac{1}{m_k}};
	\]
	then (i) follows from the triangle inequality; (ii) since $n\geq k$ implies that $\lambda_k^{m_n} = e^{2\pi i \frac{m_n}{m_k}} = 1$ by condition (a) on the sequence $(m_k)_{k\in\NN}$; and (iii) because
	\[
	|\lambda_{k,n}| = \left| \frac{e^{2\pi i \frac{n}{m_k}}-1}{e^{2\pi i \frac{1}{m_k}}-1} \right| = \frac{\left| \sin(\pi \frac{n}{m_k}) \right|}{\left| \sin(\pi \frac{1}{m_k}) \right|} \geq \frac{2}{\pi} n > \frac{m_{k-1}}{\pi},
	\]
	using that \ $\sin(\theta) \geq \frac{2}{\pi} \theta$ \ for each $\theta \in [0,\frac{\pi}{2}]$ and that \ $|\sin(\theta)| \leq |\theta|$ \ for every $\theta \in \RR$.
\end{proof}

Up to now, we have defined the operator $T$ and computed its powers $T^n$. It remains to check that $T_{(N)}$ is recurrent while $T_{(N+1)}$ is not. We start by showing that $R:X\longrightarrow X$ is  a rigid operator (see Subsection~\ref{SubSec:2.1qr} or \cite[Definition 1.2]{CoMaPa2014}) with respect to the (eventually) increasing sequence of positive integers $(m_k)_{k\in\NN}$:

\begin{fact}[\textbf{Modification of \cite[Lemma~3.4]{Auge2012}}]\label{Fact:R.rigid}
	We have $\displaystyle \lim_{k\to\infty} R^{m_k}x = x$ for all $x\in X$.
\end{fact}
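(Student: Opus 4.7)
The plan is to prove the stronger statement that $\|R^{m_j}-I\|\to 0$ in operator norm, from which the pointwise convergence follows. The argument proceeds in three short steps, exploiting the divisibility condition (a) and the growth condition (c) on the sequence $(m_k)_{k\in\NN}$.

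First I would compute $R^{m_j}$ explicitly on the dense subspace $c_{00}=\lspan\{e_n:n\in\NN\}$. Since $R$ is diagonal in the biorthogonal system, for any $x=\sum_{k=1}^{n}x_ke_k\in c_{00}$ we have $R^{m_j}x=\sum_{k=1}^{n}\lambda_k^{m_j}x_ke_k$. The key observation is that condition (a), namely $m_k\mid m_{k+1}$ for all $k$, yields $m_k\mid m_j$ whenever $k\leq j$; hence $\lambda_k^{m_j}=e^{2\pi i\,m_j/m_k}=1$ for each such $k$. Therefore
\[
R^{m_j}x-x=\sum_{k=j+1}^{n}(\lambda_k^{m_j}-1)\,x_ke_k,
\]
where the sum is empty (and so $R^{m_j}x=x$) as soon as $j\geq n$.

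Next I would estimate the norm. Using the elementary bound $|\lambda_k^{m_j}-1|\leq m_j|\lambda_k-1|\leq 2\pi\,m_j/m_k$ together with inequality \eqref{eq:1K.norma.x}, for every $x\in c_{00}$,
\[
\|R^{m_j}x-x\|\leq\sum_{k=j+1}^{n}|\lambda_k^{m_j}-1|\cdot\|x_ke_k\|\leq 2\pi K\,\|x\|\left(m_j\sum_{k=j+1}^{\infty}\frac{1}{m_k}\right).
\]
Condition (c) on the sequence $(m_k)_{k\in\NN}$ ensures that the quantity in brackets tends to $0$ as $j\to\infty$.

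Finally, since $c_{00}$ is dense in $X$ and the operator $R^{m_j}-I$ is continuous, the operator norm of $R^{m_j}-I$ on $X$ coincides with its norm on $c_{00}$; hence $\|R^{m_j}-I\|\to 0$, which gives $R^{m_j}x\to x$ for every $x\in X$. There is no real obstacle here: conditions (a) and (c) on $(m_k)_{k\in\NN}$ were engineered precisely so that this norm estimate goes through, and the delicate point is just recognising that divisibility kills the tail-free part of the sum while condition (c) controls the residual contribution from indices $k>j$.
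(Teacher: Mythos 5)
Your proof is correct and rests on exactly the same computation as the paper's: conditions (a) and (c) give $\|R^{m_j}x-x\|\leq 2\pi K\|x\|\bigl(m_j\sum_{k>j}\tfrac{1}{m_k}\bigr)$ on $c_{00}$, and the conclusion passes to $X$ by density. The only (harmless) difference is that you extract the full strength of this estimate, namely $\|R^{m_j}-I\|\to 0$ in operator norm, whereas the paper uses it only to get $\sup_j\|R^{m_j}\|<\infty$ and then finishes with a three-epsilon argument exploiting that $R^{m_j}y=y$ exactly for $y\in c_{00}$ and $j$ large; your version is marginally cleaner and proves a slightly stronger statement.
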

\begin{proof}
	Following \cite[Lemma 3.4]{Auge2012} we first show that the sequence $(\|R^{m_j}\|)_{j\in\NN}$ is bounded. In fact, if we fix $x=\sum_{k=1}^n x_ke_k \in c_{00}$ such that $\|x\|=1$, for all $j\geq 1$ we have that
	\begin{eqnarray*}
		\|R^{m_j}x-x\| &=& \left\| \sum_{k=1}^n (\lambda_k^{m_j}-1)x_ke_k \right\| \nonumber\\[10pt]
		&=& \left\| \sum_{k=j+1}^n (\lambda_k^{m_j}-1)x_ke_k \right\| \qquad \text{since } \lambda^{m_j}=1 \text{ for all } k\leq j, \nonumber\\[10pt]
		&\leq& K \|x\| \sum_{k=j+1}^n |\lambda_k^{m_j}-1| = K \sum_{k=j+1}^n \left|e^{2\pi i \frac{m_j}{m_k}}-1\right| \quad \text{ by \eqref{eq:1K.norma.x},}\nonumber\\[10pt]
		&\leq& 2\pi K \sum_{k=j+1}^{\infty} \frac{m_j}{m_k} \qquad \text{since } |e^{i\theta}-1| \leq |\theta| \text{ for every } \theta \in \RR, \nonumber
	\end{eqnarray*}
	which is less than a constant independent of $j$ by condition (c) on $(m_k)_{k\in\NN}$. The density of $c_{00}$ in $X$ implies that $(\|R^{m_j}\|)_{j\in\NN}$ is a bounded sequence. Now if given $x \in X$ and $\eps>0$ we find $y \in c_{00}$ with $\|y-x\|<\eps$, then
	\[
	\|R^{m_j}x-x\| \leq \|R^{m_j}(x-y)\| + \|R^{m_j}y-y\| + \|y-x\| < \sup_{j\in\NN}\|R^{m_j}\|\eps + \|R^{m_j}y-y\| + \eps.
	\]
	The claim follows since $R^{m_j}y = y$ for every $j \in \NN$ large enough, and $\eps$ was arbitrary.
\end{proof}

Now we can prove the {\em recurrence} property of $T_{(N)}$:

\begin{proposition}
	The operator $T_{(N)}:X^N \longrightarrow X^N$ is recurrent since $\Rec(T_{(N)})=X^N$.
\end{proposition}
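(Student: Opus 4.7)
The plan is to show directly that $\Rec(T_{(N)}) = X^N$ by exhibiting, for each fixed tuple $(x_1, \ldots, x_N) \in X^N$, a sequence $(n_\ell)_{\ell \in \NN}$ of positive integers along which $T^{n_\ell} x_i \to x_i$ simultaneously for every $i \in \{1, \ldots, N\}$. The key structural observation will be a dimension count: the subspace $F := \lspan\{e_1, \ldots, e_{N+1}\}$ has complex dimension $N+1$, while only $N$ vectors $Px_1, \ldots, Px_N$ lie in it, so these vectors admit a common annihilator in $E$ of dimension at least one. One can therefore pick $w \in E$ with $\|w\|_\infty = 1$ and $\ep{w}{Px_i} = 0$ for every $i$.

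By density of $(w_k^*)_{k \geq N+2}$ in $S_{E_\infty}$ I will extract a strictly increasing subsequence $(k_\ell)_{\ell \in \NN}$ (with $k_\ell \geq N+3$) such that $w_{k_\ell}^* \to w$ in $\|\cdot\|_\infty$, equivalently in $\|\cdot\|^*$, since both norms are equivalent on the finite-dimensional space $E$. In particular $\ep{w_{k_\ell}^*}{Px_i} \to \ep{w}{Px_i} = 0$ for every $i$. I then set $n_\ell := m_{k_\ell - 1}$ (a positive integer tending to $+\infty$) and invoke Fact~\ref{Fact:T.power.n} to write
\[
T^{n_\ell} x_i - x_i \;=\; \bigl(R^{n_\ell} x_i - x_i\bigr) + \sum_{k = N+2}^{\infty} \frac{\lambda_{k, n_\ell}}{m_{k-1}} \ep{w_k^*}{Px_i}\, e_k.
\]
The first parenthesis tends to zero by Fact~\ref{Fact:R.rigid} (rigidity of $R$ along $(m_k)$).

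For the series, Fact~\ref{Fact:lambda_k_n}(ii) applied with $n = k_\ell - 1 \geq N+2$ annihilates every index $N+2 \leq k \leq k_\ell - 1$, so only the tail starting at $k = k_\ell$ survives. I will analyse it in two pieces. The boundary term at $k = k_\ell$ has norm bounded by $|\ep{w_{k_\ell}^*}{Px_i}|$, using $|\lambda_{k_\ell, m_{k_\ell-1}}| \leq m_{k_\ell - 1}$ from Fact~\ref{Fact:lambda_k_n}(i); this bound tends to $|\ep{w}{Px_i}| = 0$ by the choice of $(k_\ell)$. For the remaining tail $k \geq k_\ell + 1$, the same bound $|\lambda_{k, n_\ell}| \leq n_\ell = m_{k_\ell - 1}$ combined with $|\ep{w_k^*}{Px_i}| \leq M \|Px_i\|$ gives a total norm at most $M \|Px_i\| \cdot m_{k_\ell - 1} \sum_{k \geq k_\ell} 1/m_k$, which tends to zero by condition (c) on $(m_k)$ applied with $j = k_\ell - 1$.

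The main difficulty lies in the boundary term $k = k_\ell$: the ratio $|\lambda_{k_\ell, m_{k_\ell - 1}}|/m_{k_\ell - 1}$ is of order $1$ and cannot be made small by exploiting the growth of $(m_k)$ alone. Its decay is forced entirely by the algebraic choice of $w$ as a simultaneous annihilator of $Px_1, \ldots, Px_N$, which is possible precisely because at most $N$ vectors have to be annihilated inside the $(N+1)$-dimensional space $F$. This dimension count is sharp, and already foreshadows why the construction must cease to be recurrent once an $(N+1)$-th coordinate is added.
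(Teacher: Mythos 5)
Your proposal is correct and follows essentially the same route as the paper's proof: the same choice of a normalized functional $w^*\in S_{E_\infty}$ annihilating $Px_1,\dots,Px_N$ (the paper phrases your dimension count as a homogeneous system with $N$ equations and $N+1$ unknowns), the same return times $n_\ell=m_{k_\ell-1}$ along a subsequence with $w_{k_\ell}^*\to w^*$, and the same three-part split of the series from Fact~\ref{Fact:T.power.n} into the vanishing initial block (Fact~\ref{Fact:lambda_k_n}(ii)), the boundary term controlled by $|\langle w_{k_\ell}^*,Px_i\rangle|$, and the tail controlled by condition (c), combined with Fact~\ref{Fact:R.rigid}. No gaps.
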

\begin{proof}
	Fix $x^{(1)},x^{(2)},...,x^{(N)} \in X$ and set $x_j^{(i)} := \ep{e_j^*}{x^{(i)}}$ for each $1\leq i\leq N$ and $j\geq 1$. Choose $(\alpha_j)_{j=1}^{N+1} \in \CC^{N+1}$ such that
	\[
	\begin{pmatrix} x_1^{(1)} & x_2^{(1)} & \cdots & x_{N+1}^{(1)} \\ x_1^{(2)} & x_2^{(2)} & \cdots & x_{N+1}^{(2)} \\ \vdots & \vdots & \cdots & \vdots \\  x_1^{(N)} & x_2^{(N)} & \cdots & x_{N+1}^{(N)} \end{pmatrix} \begin{pmatrix} \alpha_1 \\ \alpha_2 \\ \vdots \\ \alpha_{N+1} \end{pmatrix} = \begin{pmatrix} 0 \\ 0 \\ \vdots \\ 0 \end{pmatrix} \quad \text{ and } \quad \max_{1\leq j\leq N+1} |\alpha_j| = 1.
	\]
	This is indeed possible since the system considered has $N$ equations and $N+1$ unknowns. Set $w^*:=\sum_{j=1}^{N+1} \alpha_je_j^* \in E$. Then $w^* \in S_{E_{\infty}}$ and clearly $\ep{w^*}{Px^{(i)}}=0$ for all $1\leq i\leq N$. Let $(k_n)_{n\in\NN}$ be an increasing sequence of positive integers (all greater than $N+2$) such that $\lim_{n\to\infty} w_{k_n}^* = w^*$ in $E$. By Fact~\ref{Fact:T.power.n}, for each $1 \leq i \leq N$ we have that
	\begin{align*}
		\left(T^{m_{k_n-1}}-R^{m_{k_n-1}}\right)x^{(i)} &= \sum_{k=N+2}^{\infty} \frac{\lambda_{k,m_{k_n-1}}}{m_{k-1}} \ep{w_k^*}{Px^{(i)}} e_k = \underbrace{\sum_{N+2\leq k < k_n} \frac{\lambda_{k,m_{k_n-1}}}{m_{k-1}} \ep{w_k^*}{Px^{(i)}} e_k}_{(*)} \\[8pt]
		&+  \underbrace{\frac{\lambda_{k_n,m_{k_n-1}}}{m_{k_n-1}} \ep{w_{k_n}^*}{Px^{(i)}} e_{k_n}}_{(**)} + \underbrace{\sum_{k_n < k} \frac{\lambda_{k,m_{k_n-1}}}{m_{k-1}} \ep{w_k^*}{Px^{(i)}} e_k}_{(***)}.
	\end{align*}
	Note that for each $1\leq i\leq N$,
	\[
	(*) = \sum_{N+2\leq k < k_n} \frac{\lambda_{k,m_{k_n-1}}}{m_{k-1}} \ep{w_k^*}{Px^{(i)}} e_k = 0,
	\]
	since by (ii) of Fact~\ref{Fact:lambda_k_n} we have that $\lambda_{k,m_{k_n-1}} = 0$ for $k_n-1 \geq k$. By (i) of Fact~\ref{Fact:lambda_k_n} we have that $|\lambda_{k_n,m_{k_n-1}}| \leq m_{k_n-1}$ and hence
	\[
	\|(**)\| = \left| \frac{\lambda_{k_n,m_{k_n-1}}}{m_{k_n-1}} \ep{w_{k_n}^*}{Px^{(i)}} \right| \leq |\ep{w_{k_n}^*}{Px^{(i)}}| \underset{n\to\infty}{\longrightarrow} 0,
	\]
	since $\lim_{n\to\infty} w_{k_n}^* = w^*$ and $\ep{w^*}{Px^{(i)}}=0$ for every $1\leq i\leq N$. Moreover, by condition (c) on the sequence $(m_k)_{k\in\NN}$ we get that, for each $1\leq i\leq N$,
	\[
	\|(***)\| \leq \sum_{k_n < k} \frac{\left|\lambda_{k,m_{k_n-1}}\right|}{m_{k-1}} \|w_k^*\|^* \cdot \|P\| \cdot \|x^{(i)}\| \leq M \cdot (N+1)K \cdot \|x^{(i)}\| \cdot \sum_{k_n < k} \frac{m_{k_n-1}}{m_{k-1}} \underset{n\to \infty}{\longrightarrow} 0.
	\]
	Finally, for each $1\leq i \leq N$ we have that
	\begin{eqnarray*}
		\|T^{m_{k_n-1}}x^{(i)} - x^{(i)}\| &\leq& \|\left(T^{m_{k_n-1}}-R^{m_{k_n-1}}\right)x^{(i)}\| + \|R^{m_{k_n-1}}x^{(i)} - x^{(i)}\| \nonumber\\[8pt]
		&\leq& \|(*)\| + \|(**)\| + \|(***)\| + \|R^{m_{k_n-1}}x^{(i)} - x^{(i)}\| \underset{n\to\infty}{\longrightarrow} 0, \nonumber
	\end{eqnarray*}
	where $\|R^{m_{k_n-1}}x^{(i)}-x^{(i)}\| \to 0$ by Fact~\ref{Fact:R.rigid}. That is, $(x^{(1)},x^{(2)},...,x^{(N)}) \in \Rec(T_{(N)})$ and the arbitrariness of the vectors $x^{(1)}$, $x^{(2)}$, ..., $x^{(N)}$ implies that $\Rec(T_{(N)})=X^N$.
\end{proof}

To end the proof of Theorem~\ref{The:N-counter-example} we just have to show the following:

\begin{proposition}
	The operator $T_{(N+1)}:X^{N+1} \longrightarrow X^{N+1}$ is not recurrent.
\end{proposition}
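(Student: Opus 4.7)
The plan is to construct a non-empty open subset $U\subset X^{N+1}$ whose orbit under $T_{(N+1)}$ never returns to $U$: since $X^{N+1}$ is a Banach space, the existence of such a $U$ rules out topological recurrence and hence recurrence of $T_{(N+1)}$ via \cite[Proposition~2.1]{CoMaPa2014}. The natural candidate is the product of $\eps$-balls around the distinguished tuple $(e_1,e_2,\ldots,e_{N+1})$, namely
\[
U := B(e_1,\eps)\times B(e_2,\eps)\times\cdots\times B(e_{N+1},\eps),
\]
for a constant $\eps>0$ depending only on $K$ and $N$ that will be fixed at the end. The heuristic is that, for each $n\geq 1$, there is a resonant index $k=k(n)\geq N+2$ (coming from Fact~\ref{Fact:lambda_k_n}(iii)) where the ratio $|\lambda_{k,n}|/m_{k-1}$ stays bounded away from zero, and a corresponding axis $i=i(n)\in\{1,\ldots,N+1\}$ (provided by Fact~\ref{Fact:find.i} applied to $w_{k}^*\in S_{E_{\infty}}$) for which the non-rigid part of $T^n$ imprints a macroscopic contribution in the direction of $e_k$ on any vector close to $e_i$.

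To make this precise I would fix $n\geq 1$, set $k:=\min\{j\geq N+2 : 2n\leq m_j\}$, and let $i\in\{1,\ldots,N+1\}$ be the index given by Fact~\ref{Fact:find.i} for $w^*=w_k^*$. Assume for a contradiction that some $(y^{(1)},\ldots,y^{(N+1)})\in U$ satisfies $T_{(N+1)}^n(y^{(1)},\ldots,y^{(N+1)})\in U$. From Fact~\ref{Fact:T.power.n} and $\ep{e_k^*}{e_j}=\delta_{k,j}$, the $k$-th coordinate of $T^n y^{(i)}$ is
\[
\ep{e_k^*}{T^n y^{(i)}}=\lambda_k^n\, y^{(i)}_k + \frac{\lambda_{k,n}}{m_{k-1}}\ep{w_k^*}{Py^{(i)}},
\]
and since $k\geq N+2$ while $i\leq N+1$, the $k$-th coordinate of $e_i$ vanishes. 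Applying \eqref{eq:2K.eps} to $\|y^{(i)}-e_i\|<\eps$ and to $\|T^n y^{(i)}-e_i\|<\eps$ yields the two estimates $|y^{(i)}_k|<K\eps$ and $|\ep{e_k^*}{T^n y^{(i)}}|<K\eps$.

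Combining these with Fact~\ref{Fact:find.i} (which gives $|\ep{w_k^*}{Py^{(i)}}|>1-(N+1)K\eps$) and with Fact~\ref{Fact:lambda_k_n}(iii) (which gives $|\lambda_{k,n}|/m_{k-1}>1/\pi$), I would obtain
\[
K\eps > \left|\ep{e_k^*}{T^n y^{(i)}}\right| \geq \frac{|\lambda_{k,n}|}{m_{k-1}}\left|\ep{w_k^*}{Py^{(i)}}\right| - |\lambda_k^n\, y^{(i)}_k| > \frac{1-(N+1)K\eps}{\pi} - K\eps,
\]
which rearranges to $K\eps(2\pi + N+1)>1$. Choosing $\eps<\frac{1}{K(2\pi+N+1)}$ from the start then contradicts the existence of such a tuple for every $n\geq 1$ simultaneously, so $T_{(N+1)}^n(U)\cap U=\varnothing$ for all $n\geq 1$ and $T_{(N+1)}$ is not recurrent. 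No serious obstacle is expected: the mechanism has been prepared in the preceding subsections, where the growth of $(m_k)_{k\in\NN}$ was tuned so that Fact~\ref{Fact:lambda_k_n}(iii) yields the uniform $1/\pi$ lower bound, and the family $(w_k^*)_{k\geq N+2}$ was placed densely inside $S_{E_{\infty}}$ precisely so that Fact~\ref{Fact:find.i} can be invoked on each $w_{k(n)}^*$.
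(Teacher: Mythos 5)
Your proposal is correct and is essentially the paper's own argument: the same product of balls $B(e_1,\eps)\times\cdots\times B(e_{N+1},\eps)$, the same resonant index $k_n=\min\{j\geq N+2: 2n\leq m_j\}$ with the $1/\pi$ lower bound from Fact~\ref{Fact:lambda_k_n}(iii), the same axis $i_n$ from Fact~\ref{Fact:find.i}, and the same coordinate estimate leading to $K\eps(2\pi+N+1)>1$. The only (cosmetic) difference is that you fix $\eps$ small in advance to exhibit a non-returning open set, whereas the paper assumes recurrence, derives $\eps>\frac{1}{K(2\pi+N+1)}$ for every $\eps>0$, and lets $\eps\to 0$.
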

\begin{proof}
	By contradiction suppose that $T_{(N+1)}$ is recurrent. Fix any positive value $\eps>0$ and consider the open balls $B(e_1,\eps),B(e_2,\eps),...,B(e_{N+1},\eps) \subset X$ centred at each vector $e_i$, $1\leq i\leq N+1$, with radius $\eps$. Then there exists some $n \in \NN$ such that
	\[
	T^n(B(e_i,\eps)) \cap B(e_i,\eps) \neq \varnothing \quad \text{ for every } i=1,2,...,N+1.
	\]
	Let $k_n := \min\{ j \geq N+2 : 2n \leq m_j \} \geq N+2$. By (iii) of Fact~\ref{Fact:lambda_k_n} we have that
	\begin{equation}\label{eq:lambda_k_n}
		|\lambda_{k_n,n}| > \tfrac{m_{k_n-1}}{\pi}.
	\end{equation}
	Moreover, Fact~\ref{Fact:find.i} applied to the functional $w_{k_n}^*$ yields the existence of (at least) one index $i_n \in \{1,2,...,N+1\}$ for which
	\begin{equation}\label{eq:functional}
		\left|\ep{w_{k_n}^*}{Px}\right| > 1-(N+1)K\eps \quad \text{ whenever } \|x-e_{i_n}\|<\eps.
	\end{equation}
	Picking any $x \in B(e_{i_n},\eps) \cap T^{-n}(B(e_{i_n},\eps)) \cap c_{00}$ we get the following inequalities
	\begin{eqnarray*}
		\eps &>& \|T^nx - e_{i_n}\| \geq \frac{1}{K} \left| \ep{e_{k_n}^*}{T^nx-e_{i_n}} \right| = \frac{1}{K} \left| \ep{e_{k_n}^*}{T^nx} \right| \qquad \text{ by \eqref{eq:1K.norma.x},} \nonumber\\[12pt]
		&=& \frac{1}{K} \left| \lambda_{k_n}^n x_{k_n} + \frac{\lambda_{k_n,n}}{m_{k_n-1}} \ep{w_{k_n}^*}{Px} \right| \qquad\qquad\qquad\qquad\quad \text{ by Fact~\ref{Fact:T.power.n}.} \nonumber
	\end{eqnarray*}
	Hence
	\begin{eqnarray*}
		\eps &>& \frac{1}{K} \left( \left| \frac{\lambda_{k_n,n}}{m_{k_n-1}} \right| \cdot \left|\ep{w_{k_n}^*}{Px}\right| - K\eps \right) > \frac{1}{K\pi} \cdot |\ep{w_{k_n}^*}{Px}| - \eps \qquad \text{ by \eqref{eq:2K.eps} and \eqref{eq:lambda_k_n},} \nonumber\\[12pt]
		&>& \frac{\left(1-(N+1)K\eps\right)}{K\pi} - \eps = \frac{1}{K\pi} - \left(\frac{N+1+\pi}{\pi}\right) \eps \qquad\qquad\qquad\qquad\qquad \text{ by \eqref{eq:functional}}.\nonumber
	\end{eqnarray*}
	The arbitrariness of $\eps>0$ yields now a contradiction by letting $\eps\to 0$.  
\end{proof}

The \textbf{complex} version of Theorem~\ref{The:N-counter-example} is now proved. The construction can be adapted to the \textbf{real} case by using the same arguments as in \cite[Section~3.2]{Auge2012}, so that every separable infinite-dimensional Banach space supports a recurrent operator which is not quasi-rigid. Moreover, there are operators whose $N$-fold direct sum is recurrent while its $N+1$-fold direct sum is not, and taking $N=1$ we solve negatively the {\em $T\oplus T$-recurrence problem}. It is worth mentioning that a slight variation of this construction can be used to solve the respective problem for {\em C$_0$-semigroups of operators}, which has been recently stated in \cite{ChenKosVe2021}. This was communicated to us by F. Bayart and Q. Menet independently, and the details of this variation will appear in a forthcoming work.

\section{Furstenberg families for pointwise recurrence}\label{Sec:4F-rec}

In the last two decades hypercyclicity has been studied from the {\em frequency of visits} point of view: one investigates ``how often'' the orbit of a vector returns to every open subset of the space. The recent notions of {\em frequent}, {\em $\Uc$-frequent} or {\em reiterative hypercyclicity} have emerged following this line of thought (see \cite{BaGri2006,Shkarin2009,BMPP2016,BMPP2019}). As shown in \cite{BMPP2016,BoGrLoPe2022}, this kind of ``{\em frequently dense behaviour}'' can be decomposed in two necessary ingredients:
\begin{enumerate}[1)]
	\item {\em usual hypercyclicity} (we have to require that the orbit studied is dense);
	
	\item some kind of {\em strong recurrence} (we have to require that the orbit studied returns to each neighbourhood of the starting vector with some {\em frequency}).
\end{enumerate}
Even though the study of {\em linear recurrence} began in 2014 with \cite{CoMaPa2014}, {\em stronger recurrence} notions were only systematically considered in the 2022 work \cite{BoGrLoPe2022}, and then in the following papers \cite{GriLo2023,CarMur2022_arXiv} where the concepts of {\em $\Fc$-recurrence} and {\em $\Fc$-hypercyclicity} with respect to a Furstenberg family $\Fc$ are deeply studied. In this section we show that quasi-rigidity can be studied from an $\Fc$-recurrence perspective for a very particular kind of Furstenberg families called {\em free filters} (see Proposition~\ref{Pro:qr.filter}). Then we study both the weakest and strongest possible $\Fc$-recurrence notions together with the Furstenberg families associated to them (see Subsection~\ref{SubSec:4.2apropriate.F}), and we finish the section recalling that $\Fc$-recurrence behaves well under homomorphisms, quasi-conjugacies and commutants (see Subsection~\ref{SubSec:4.3qc.c}).\\[-5pt]

Let us start by recalling the precise definitions, as well as some known examples.

\subsection{Definitions, examples and $\Fc(A)$-recurrence}

A collection of sets $\Fc \subset \Part(\NN_0)$ is said to be a {\em Furstenberg family} (a {\em family} for short) provided that: each set $A \in \Fc$ is infinite; $\Fc$ is hereditarily upward (i.e.\ $B \in \Fc$ whenever $A \in \Fc$ and $A \subset B$); and also $A\cap[n,\infty[ \in \Fc$ for all $A \in \Fc$ and $n \in \NN$. The {\em dual family} $\Fc^*$ for a given family $\Fc$ is the collection of sets $A \subset \NN_0$ such that $A \cap B\neq\varnothing$ for every $B \in \Fc$. The rather vague notion of {\em frequency} mentioned above will be defined in terms of families.\\[-5pt]

Given a dynamical system $(X,T)$, a point $x \in X$ and a non-empty (open) subset $U \subset X$ we denote by
\[
N_T(x,U) := \{ n \in \NN_0 : T^nx \in U \},
\]
the {\em return set from $x$ to $U$}. It will be denoted by $N(x,U)$ if no confusion with the map arises. Following \cite{BMPP2016,BMPP2019,BG2018,BoGrLoPe2022,CarMur2022_IEOT,CarMur2022_MS,CarMur2022_arXiv,GriLo2023,Lopez2024_RinM,Lopez2025_arXiv_frequently} we can now define:

\begin{definition}\label{Def:F-rec-hyp}
	Let $(X,T)$ be a dynamical system and let $\Fc$ be a Furstenberg family. A point $x \in X$ is said to be:
	\begin{enumerate}[--]
		\item {\em $\Fc$-recurrent} if $N(x,U) \in \Fc$ for every neighbourhood $U$ of $x$;
		
		\item {\em $\Fc$-hypercyclic} if $N(x,U) \in \Fc$ for every non-empty open subset $U \subset X$.
	\end{enumerate}
	Moreover, we will denote by:
	\begin{enumerate}[--]
		\item $\Fc\Rec(T)$ the {\em set of $\Fc$-recurrent points}, and we will say that $T$ is {\em $\Fc$-recurrent} whenever the set $\Fc\Rec(T)$ is dense in $X$;
		
		\item $\Fc\HC(T)$ the {\em set of $\Fc$-hypercyclic points}, and we will say that $T$ is {\em $\Fc$-hypercyclic} whenever the set $\Fc\HC(T)$ is non-empty.
	\end{enumerate}
\end{definition}

\begin{example}\label{Exa:F-rec-hyp}
	The Linear Dynamics works \cite{BaMa2009_book,BMPP2016,BMPP2019,BG2018,BoGrLoPe2022,CarMur2022_IEOT,CarMur2022_MS,CarMur2022_arXiv,CoMaPa2014,GriLo2023,GrPe2011_book,Lopez2024_RinM} have considered the Furstenberg families formed by:
	\begin{enumerate}[(a)]
		\item the {\em infinite sets}
		\[
		\Ic = \{ A \subset \NN_0 : A \text{ is infinite} \}.
		\]
		The notion of $\Ic$-recurrence (resp.\ $\Ic$-hypercyclicity) coincides with usual recurrence (resp.\ usual hypercyclicity) as defined in the Introduction and, as stated there, we will denote by $\Rec(T)$ (resp.\ $\HC(T)$) the set of recurrent (resp.\ hypercyclic) vectors.
		
		\item the {\em sets containing arbitrarily long arithmetic progressions}
		\[
		\AP := \{ A \subset \NN_0 : A \text{ contains arbitrarily long arithmetic progressions} \}.
		\]
		In other words, $A \in \AP$ if $\forall l \in \NN, \ \exists a,n \in \NN$ such that $\{a+kn : 0\leq k\leq l\} \subset A$, or equivalently $A$ contains the {\em arithmetic progression} of length $l+1$, common difference $n\in\NN$ and initial term $a\in\NN_0$. The concept of $\AP$-recurrence is equivalent to the notion of (topological) {\em multiple recurrence}, which requires that for each non-empty open subset $U \subset X$ and each length $l \in \NN$ there exists some $n\in\NN$ fulfilling that
		\[
		U \cap T^{-n}(U) \cap \cdots \cap T^{-ln}(U) \neq \varnothing.
		\]
		This concept was introduced for linear dynamical systems in \cite{CoPa2012} and further developed in \cite{KwiLiOpYe2017} and \cite{CarMur2022_MS}, where $\AP$-hypercyclicity is also considered. We will denote by $\AP\Rec(T)$ (resp.\ $\AP\HC(T)$) the set of $\AP$-recurrent (resp.\ $\AP$-hypercyclic) vectors.
		
		\item the {\em sets with positive upper Banach density} $\BDsup := \{ A \subset \NN_0 : \Bdsup(A)>0 \}$, where for each $A \subset \NN_0$ its {\em upper Banach density} is defined as
		\[
		\Bdsup(A) := \lim_{N \to \infty} \left( \max_{n \geq 0} \dfrac{\# \ A\cap [n+1,n+N]}{N} \right). \quad \text{See \cite{GTT2010} for alternative definitions.}
		\]
		The notion of $\BDsup$-recurrence (resp.\ $\BDsup$-hypercyclicity) has been called {\em reiterative recurrence} (resp.\ {\em reiterative hypercyclicity}), and $\RRec(T)$ (resp.\ $\RHC(T)$) is the set of reiteratively recurrent (resp.\ reiteratively hypercyclic) vectors, see \cite{BMPP2016,BMPP2019,BoGrLoPe2022,GriLo2023,Lopez2024_RinM}.
		
		\item the {\em sets with positive upper density} $\Dsup := \{ A \subset \NN_0 : \dsup(A)>0 \}$, where for each $A \subset \NN_0$ its {\em upper density} is defined as
		\[
		\dsup(A) := \limsup_{N \to \infty} \dfrac{\# \ A\cap [1,N]}{N}.
		\]
		The $\Dsup$-recurrence (resp.\ $\Dsup$-hypercyclicity) notion has been called {\em $\Uc$-frequent recurrence} (resp.\ {\em $\Uc$-frequent hypercyclicity}) and $\UFRec(T)$ (resp.\ $\UFHC(T)$) will denote the set of $\Uc$-frequent recurrent (resp.\ $\Uc$-frequent hypercyclic) vectors, see \cite{Shkarin2009,BMPP2016,BMPP2019,BoGrLoPe2022,GriLo2023}.
		
		\item the {\em sets with positive lower density} $\Dinf := \{ A \subset \NN_0 : \dinf(A)>0 \}$, where for each $A \subset \NN_0$ its {\em lower density} is defined as
		\[
		\dinf(A) := \limsup_{N \to \infty} \dfrac{\# \ A\cap [1,N]}{N}.
		\]
		The notion of $\Dinf$-recurrence (resp.\ $\Dinf$-hypercyclicity) is that of {\em frequent recurrence} (resp.\ {\em frequent hypercyclicity}), and $\FRec(T)$ (resp.\ $\FHC(T)$) is the set of frequently recurrent (resp.\ frequently hypercyclic) vectors, see \cite{BaGri2006,BMPP2016,BMPP2019,BoGrLoPe2022,GriLo2023,Lopez2024_RinM}.
		
		\item the {\em syndetic sets}
		\[
		\Sc := \{ A \subset \NN_0 : A \text{ is syndetic} \},
		\]
		i.e.\ $A \in \Sc$ whenever $\exists m \in \NN$ such that $\forall a \in \NN_0$, $[a,a+m]\cap A\neq\varnothing$. The notion of $\Sc$-recurrence has been called {\em uniform recurrence} and we denote by $\URec(T)$ the set of uniformly recurrent vectors, see \cite{Furstenberg1981_book,BoGrLoPe2022,GriLo2023,Lopez2024_RinM}. The family $\Sc$ coincides with the family of {\em sets with positive lower Banach density} $\BDinf := \{ A \subset \NN_0 : \Bdinf(A)>0 \}$, where for each $A \subset \NN_0$ its {\em lower Banach density} is defined as
		\[
		\Bdinf(A) := \lim_{N \to \infty} \left( \inf_{n \geq 0} \dfrac{\# \ A\cap [n+1,n+N]}{N} \right). \quad \text{See \cite{GTT2010} for alternative definitions.}
		\]
		
		\item the {\em IP-sets}
		\[
		\IP := \{ A \subset \NN_0 : A \text{ is an IP-set} \},
		\]
		i.e.\ $A \in \IP$ whenever $\{ \sum_{k \in F} n_k : F \subset \NN \text{ finite} \} \subset A$ for some sequence of natural numbers $(n_k)_{k\in\NN}$. In this case we use the dual family $\IP^*$, and for the $\IP^*$-recurrence notion we denote by $\IP^*\Rec(T)$ the set $\IP^*$-recurrent vectors, see \cite{Furstenberg1981_book,GaMaPeOp2015,BoGrLoPe2022,GriLo2023}.
		
		\item the {\em $\Delta$-sets}
		\[
		\Delta := \{ A \subset \NN_0 : A \text{ is a $\Delta$-set} \},
		\]
		i.e.\ $A \in \Delta$ whenever $\exists B \in \Ic$ with $(B-B)\cap\NN \subset A$. In this case we again use the dual family $\Delta^*$, and for the $\Delta^*$-recurrence notion we denote by $\Delta^*\Rec(T)$ the set of $\Delta^*$-recurrent vectors, see \cite{Furstenberg1981_book,GriLo2023}.
	\end{enumerate}
\end{example}

We have the following relations between the families listed above:
\[
\Del^* \subset \IP^* \subset \Sc = \BDinf \subset \Dinf \subset \Dsup \subset \BDsup \subset \AP \subset \Ic.
\]
Among these inclusions, some of them follow easily from the definitions while others depend on deep theorems like the celebrated {\em Szemer\'edi theorem}, see \cite{BerDown2008,CarMur2022_MS,HS1998,Szemeredi1975}. They imply the respective inclusions between the introduced sets of recurrent vectors
\begin{align*}
	\Del^*\Rec(T) &\subset \IP^*\Rec(T) \subset \URec(T) \subset ...\\[10pt]
	... &\subset \FRec(T) \subset \UFRec(T) \subset \RRec(T) \subset \AP\Rec(T) \subset \Rec(T),
\end{align*}
and also between the sets of hypercyclic vectors
\[
\FHC(T) \subset \UFHC(T) \subset \RHC(T) \subset \AP\HC(T) \subset \HC(T).
\]
It is worth mentioning that the families $\Fc$ for which there exist $\Fc$-hypercyclic operators are by far less common than those for which $\Fc$-recurrence exists, which is not surprising, since having an orbit distributed around the whole space is much more complicated than having it just coming back around the initial point of the orbit (it is known that there does not exist any $\Del^*$, $\IP^*$ neither $\Sc$-hypercyclic operator, see \cite{BMPP2016,BoGrLoPe2022}). Other families have been considered from the $\Fc$-hypercyclicity perspective in the works \cite{BG2018,ErEsMe2021}. Our aim now is to show that a dynamical system is {\em quasi-rigid} if and only if it is $\Fc$-recurrent with respect to a {\em free filter} $\Fc$. Let us recall the following classical definitions (see \cite{Bourbaki1989}):

\begin{definition}
	Let $\Fc \subset \Part(\NN_0)$ be a collection of non-empty sets of natural numbers (note that $\Fc$ is not necessarily a Furstenberg family here). We say that $\Fc$:
	\begin{enumerate}[--]
		\item has the {\em finite intersection property}, if $\bigcap_{A \in \Ac} A \neq \varnothing$ for every $\Ac \subset \Fc$ with $\#\Ac < \infty$;
		
		\item is a {\em filter}, if $\Fc$ is hereditarily upward and for every $A,B \in \Fc$ we have that $A \cap B \in \Fc$;
		
		\item is a {\em free filter}, if it is a filter and $\bigcap_{A \in \Fc} A = \varnothing$.
	\end{enumerate}
	Finally, given any infinite subset $A \subset \NN_0$ we denote by $\Fc(A)$ the {\em free filter generated by $A$}, which is the Furstenberg family
	\[
	\Fc(A) := \left\{ B \subset \NN_0 : \# (A\setminus B) < \infty \right\}.
	\]
	In the literature, $\Fc(A)$ has been called the {\em Fr\'echet filter} on $A$, or also the {\em eventuality filter} generated by the increasing sequence of integers $(n_k)_{k\in\NN}$ forming the set $A \subset \NN_0$ (see \cite{Bourbaki1989}).
\end{definition}

\begin{remark}
	It is clear that every {\em free filter} is a {\em filter}, and that every {\em filter} has the {\em finite intersection property}. However, when $\Fc$ is a {\em Furstenberg family} as defined in this paper the previous definitions have some extra (and immediate) \textbf{consequences}:
	\begin{enumerate}[(a)]
		\item {\em A Furstenberg family $\Fc$ has the finite intersection property if and only if $\bigcap_{A \in \Ac} A \in \Ic$ for every $\Ac \subset \Fc$ with $\#\Ac < \infty$}: indeed, if $\Fc$ has the finite intersection property but for some $\Ac \subset \Fc$ with $\#\Ac < \infty$ we had that $\bigcap_{A \in \Ac} A$ is finite, then for any fixed $n> \max\left(\bigcap_{A \in \Ac} A\right)$ we would arrive to the contradiction: 
		\[
		\Ac' := \{ A \cap [n,\infty[ : A \in \Ac \} \subset \Fc \quad \text{ with } \quad \#\Ac'<\infty \quad \text{ but } \quad \bigcap_{A \in \Ac'} A = \varnothing.
		\]
		
		\item {\em Let $\Fc$ be a Furstenberg family. Then, $\Fc$ is a filter if and only if it is a free filter}: note that given any $B$ from a filter $\Fc$ which is also a Furstenberg family we have that
		\[
		\bigcap_{A \in \Fc} A \subset \bigcap_{n \in \NN} B\cap[n,\infty[ = \varnothing,
		\]
		by the definition of Furstenberg family used in this paper.
	\end{enumerate}
\end{remark}

We are ready to characterize quasi-rigidity in terms of $\Fc$-recurrence and free filters. Recall first that, given a filter $\Fc \subset \Part(\NN_0)$, a collection of sets $\Ac \subset \Fc$ is called a {\em base~of~the~filter~$\Fc$} if for every set $B \in \Fc$ there exits some $A_B \in \Ac$ such that $A_B \subset B$. We have the following:

\begin{proposition}\label{Pro:qr.filter}
	Let $(X,T)$ be a dynamical system. The next statements are equivalent:
	\begin{enumerate}[{\em(i)}]
		\item $T$ is quasi-rigid;
		
		\item $T$ is $\Fc(A)$-recurrent for some infinite subset $A \subset \NN_0$;
		
		\item $T$ is $\Fc$-recurrent with respect to a free filter $\Fc$ with a countable base.
	\end{enumerate}
	Moreover, if $X$ is a second-countable space, the previous statements are equivalent to:
	\begin{enumerate}[{\em(iv)}]
		\item $T$ is $\Fc$-recurrent for a family $\Fc$ with the finite intersection property.
	\end{enumerate}
\end{proposition}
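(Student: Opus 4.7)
The plan is to establish the cycle (i) $\Rightarrow$ (ii) $\Rightarrow$ (iii) $\Rightarrow$ (i) in full generality (without any topological assumption on $X$), and then close the square by (iii) $\Rightarrow$ (iv) trivially and (iv) $\Rightarrow$ (i) using second-countability.

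For (i) $\Leftrightarrow$ (ii), the key observation is that if $T$ is quasi-rigid with respect to $(n_k)_{k \in \NN}$ on a dense set $Y$, and one sets $A := \{n_k : k \in \NN\}$, then for each $x \in Y$ and each neighbourhood $U$ of $x$ one has $T^{n_k}x \in U$ eventually, so $A \setminus N(x,U)$ is finite and $N(x,U) \in \Fc(A)$; the converse is symmetric, writing $A$ as an increasing enumeration. For (ii) $\Rightarrow$ (iii) it suffices to note that $\Fc(A)$ is a free filter admitting the countable base $\{A \cap [n,\infty[ \ : n \in \NN\}$.

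For (iii) $\Rightarrow$ (i), given $\Fc$ a free filter with countable base, replacing the base by the sequence of finite intersections we may assume it is a decreasing sequence $(B_n)_{n \in \NN}$. Since each $B_n$ is infinite, pick $k_n \in B_n$ with $k_1<k_2<\cdots$, and set $A := \{k_n : n \in \NN\}$. For every $C \in \Fc$ there is some $n$ with $B_n \subset C$, so $A \setminus C \subset A \setminus B_n \subset \{k_1,\ldots,k_{n-1}\}$ is finite; this shows $\Fc \subset \Fc(A)$. Therefore every $\Fc$-recurrent vector is $\Fc(A)$-recurrent, and $\Fc\Rec(T)$ (which is dense by hypothesis) witnesses quasi-rigidity with respect to $(k_n)_{k \in \NN}$. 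Finally, (iii) $\Rightarrow$ (iv) is immediate: a free filter closed under finite intersections of infinite sets clearly has the finite intersection property in the sense defined before the proposition.

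The substantive step, and the one I expect to be the main obstacle, is (iv) $\Rightarrow$ (i). Here I would argue as follows: fix a countable base $(U_s)_{s \in \NN}$ of non-empty open subsets of $X$, pick $y_s \in \Fc\Rec(T) \cap U_s$ for each $s$ (using density of $\Fc\Rec(T)$), and use second-countability to choose a decreasing open neighbourhood basis $(W_{s,k})_{k \geq s}$ at $y_s$ with $W_{s,s} \subset U_s$. For every $s \leq k$ the return set $N(y_s,W_{s,k})$ belongs to $\Fc$, so by the finite intersection property the set $\bigcap_{s=1}^{k} N(y_s,W_{s,k})$ is infinite and we may select $n_k$ in it with $n_k > n_{k-1}$. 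For each fixed $s$ and every $k \geq s$ we then have $T^{n_k}y_s \in W_{s,k}$, whence $T^{n_k}y_s \to y_s$. Since $\{y_s : s \in \NN\}$ is dense (as it meets every element of the countable base), $T$ is quasi-rigid with respect to $(n_k)_{k \in \NN}$. The difficulty in this implication lies precisely in coordinating a single sequence $(n_k)$ that captures all points simultaneously: second-countability is what allows us to encode everything into a countable diagonal construction, while the finite intersection property is what makes the required $n_k$ exist at each step.
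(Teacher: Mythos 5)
Your proposal is correct and follows essentially the same route as the paper: identify $\Fc(A)$-recurrence with quasi-rigidity via the enumeration of $A$, reduce a countably based free filter to a Fr\'echet filter (the paper takes $n_k:=\min(A_k)$ of a decreasing base where you exhibit $\Fc\subset\Fc(A)$, a cosmetic difference), and prove (iv) $\Rightarrow$ (i) by the same diagonal construction over a countable dense set of $\Fc$-recurrent points, using that the finite intersection property for a Furstenberg family forces finite intersections to be infinite. No gaps.
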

\begin{proof}
	For (i) $\Rightarrow$ (ii), if $T$ is quasi-rigid with respect to $(n_k)_{k\in\NN}$ it is enough to take $A:=\{ n_k : k \in\NN \}$. The implication (ii) $\Rightarrow$ (iii) is obvious since $\Fc(A)$ has a countable base. To see (iii) $\Rightarrow$ (i) assume that the map $T$ is $\Fc$-recurrent and let $(A_k)_{k\in\NN}$ be a decreasing base of the free filter $\Fc$. Setting $n_k := \min(A_k)$ for each $k \in \NN$, and taking a subsequence if necessary, we get an increasing sequence $(n_k)_{k\in\NN}$ with respect to which $T$ is quasi-rigid.\\[-5pt]
	
	We always have that (i), (ii) and (iii) $\Rightarrow$ (iv) even if the space $X$ is not second-countable. Assume now that $X$ is second-countable and that $T$ is $\Fc$-recurrent for a family $\Fc$ with the finite intersection property. Let $\{ x_s : s \in \NN \} \subset \Fc\Rec(T)$ be a dense set in $X$ and, for each $s \in \NN$, let $(U_{s,k})_{k\in\NN}$ be a decreasing neighbourhood basis of $x_s$. We can now recursively construct an increasing sequence of positive integers $(n_k)_{k\in\NN}$ such that
	\[
	T^{n_k}x_s \in U_{s,k} \quad \text{ for every } s,k \in \NN \text{ with } 1\leq s\leq k,
	\]
	by taking a sufficiently large integer $n_k \in \bigcap_{s=1}^k N_T(x_s,U_{s,k}) \in \Ic$. It is easily seen that $T$ is quasi-rigid with respect to $(n_k)_{k\in\NN}$, which shows that (iv) $\Rightarrow$ (i), (ii) and (iii).
\end{proof}

\subsection{Appropriate Furstenberg families for $\Fc$-recurrence}\label{SubSec:4.2apropriate.F}

We have just shown that quasi-rigidity is indeed a particular case of $\Fc$-recurrence for a Furstenberg family not listed in Example~\ref{Exa:F-rec-hyp}. It is then natural to ask the following:

\begin{question}\label{Q:families}
	Are the families of Example~\ref{Exa:F-rec-hyp}, i.e.\ $\Fc = \Ic, \AP, \BDsup, \Dsup, \Dinf, \Sc, \IP^*, \Del^*$, and the free filters $\Fc(A)$ the only ones for which $\Fc$-recurrence should be considered?
\end{question}

This may seem a very open query. Our objective in this part of the paper is to look in detail into two very different classes of Furstenberg families: those for which $\Fc$-recurrence is the {\em weakest} possible recurrence notion, i.e.\ families $\Fc \subsetneq \Ic$ such that $\Fc\Rec(T) = \Rec(T)$; and those for which $\Fc$-recurrence is the {\em strongest} possible recurrence notion, i.e.\ periodicity.\\[-5pt]

Starting with the first of these two classes: {\em for any dynamical system $(X,T)$ every recurrent point $x \in \Rec(T)$ is an $\IP$-recurrent point} (this is shown in \cite[Theorem~2.17]{Furstenberg1981_book} when $X$ is a {\em metric space}, but the proof easily extends for arbitrary dynamical systems). Hence $\Rec(T) \subset \IP\Rec(T) \subset \Del\Rec(T) \subset \Ic\Rec(T)$. Since $\Ic\Rec(T) = \Rec(T)$ we always have
\[
\IP\Rec(T) = \Del\Rec(T) = \Rec(T).
\]
This relation allows us to give an alternative (and much simpler) proof for the so-called {\em Ansari} and {\em Le\'on-M\"uller recurrence theorems}, which assert that powers and unimodular multiples of an operator share the same set of recurrent vectors (see \cite[Proposition~2.3]{CoMaPa2014} for the original, different for each of the $T^p$ and $\lambda T$ cases, and rather long proof):

\begin{proposition}[\textbf{\cite[Proposition~2.3]{CoMaPa2014}}]\label{Pro:Ansari+Leon-Muller}
	Let $(X,T)$ be a dynamical system. Then:
	\begin{enumerate}[{\em(a)}]
		\item For every positive integer $p \in \NN$ we have that $\Rec(T)=\Rec(T^p)$.
		
		\item If $(X,T)$ is linear, then for every $\lambda \in \TT$ we have that $\Rec(T)=\Rec(\lambda T)$.
	\end{enumerate}
\end{proposition}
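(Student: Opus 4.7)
My plan is to deduce both parts from the equality $\Rec(S) = \IP\Rec(S)$ just recalled, combined with elementary pigeonhole arguments. In both parts one direction is automatic: $\Rec(T^p) \subset \Rec(T)$ is trivial since a $T^p$-orbit is a subsequence of a $T$-orbit, and the symmetry $T = \lambda^{-1}(\lambda T)$ with $\lambda^{-1} \in \TT$ reduces part (b) to proving only $\Rec(\lambda T) \subset \Rec(T)$.

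For (a), I would fix $x \in \Rec(T) = \IP\Rec(T)$ and a neighbourhood $U$ of $x$, so that $\IP$-recurrence yields a sequence $(n_k)_{k \in \NN}$ with $\sum_{k \in F} n_k \in N_T(x,U)$ for every non-empty finite $F \subset \NN$. Applying the pigeonhole principle to the partial sums $s_j := n_1 + \cdots + n_j$ modulo $p$, some residue class is hit by infinitely many $s_{j_1} < s_{j_2} < \cdots$; each difference $s_{j_t} - s_{j_1} = n_{j_1+1} + \cdots + n_{j_t}$ then lies in $N_T(x,U)$, is divisible by $p$, and is strictly increasing in $t$. Dividing by $p$ gives infinitely many iterates of $T^p$ returning $x$ into $U$, hence $x \in \Rec(T^p)$.

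For (b), after the symmetric reduction I would fix $x \in \Rec(\lambda T) = \IP\Rec(\lambda T)$ and any neighbourhood $W$ of $x$, and use continuity of scalar multiplication at $(1,x)$ to choose $V \ni x$ and $\eps > 0$ with $\mu v \in W$ whenever $v \in V$ and $\mu \in \TT$ satisfies $|\mu - 1| < \eps$. Pick $(n_k)_{k \in \NN}$ so that $\sum_{k \in F} n_k \in N_{\lambda T}(x, V)$ for every non-empty finite $F$. The crucial step, and where I expect the real content of the proof to lie, is to show that $A_\eps := \{n \in \NN_0 : |\lambda^n - 1| < \eps\}$ belongs to $\IP^*$; this will again follow by pigeonhole, this time inside the compact group $\TT$: for any IP-set generated by a sequence $(m_k)$, two of the points $\lambda^{m_1 + \cdots + m_j} \in \TT$ must lie within distance $\eps$, and their ratio produces an element of the IP-set lying in $A_\eps$.

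Once $A_\eps \in \IP^*$ is in hand, the fact that every $\IP^*$-set meets every IP-set in an \emph{infinite} set (an easy consequence of the Furstenberg-family axiom $B \in \Fc \Rightarrow B \cap [n,\infty) \in \Fc$ used in the paper) gives infinitely many $n \in N_{\lambda T}(x,V) \cap A_\eps$. For each such $n$ one has $\lambda^n T^n x \in V$ and $|\lambda^{-n} - 1| = |\lambda^n - 1| < \eps$, whence $T^n x = \lambda^{-n}(\lambda^n T^n x) \in W$ by the choice of $V$ and $\eps$, so $x \in \Rec(T)$. The main (mild) obstacle is thus the compactness/pigeonhole argument inside $\TT$ showing $A_\eps \in \IP^*$; everything else is bookkeeping.
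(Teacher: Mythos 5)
Your argument is correct and follows essentially the same route as the paper: both exploit that the return sets of a recurrent point are IP-sets (equivalently, $\Del$-sets) and intersect them with a suitable dual-family set ($p\cdot\NN_0$ for part (a), $\{n\in\NN : |\lambda^n-1|<\eps\}$ for part (b)), the trivial inclusions being handled exactly as you describe. The only difference is cosmetic: the paper passes through $\Del^*$ and cites the membership $\{n\in\NN : |\lambda^n-1|<\eps\}\in\Del^*$ from the literature, whereas you work directly with $\IP^*$ and supply the two pigeonhole arguments yourself.
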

\begin{proof}
	Let $x \in \Rec(T)$, i.e.\ $N_T(x,V)$ belongs to $\Delta$ for every neighbourhood $V$ of $x$. Fixed any $p \in \NN$ and any neighbourhood $U$ of $x$, since the set $(p \cdot \NN_0)$ belongs to $\Delta^*$ we have that
	\[
	\varnothing \neq \tfrac{1}{p} \cdot \left( N_T(x,U) \cap \left( p \cdot \NN_0 \right) \right) \subset N_{T^p}(x,U),
	\]
	which implies that $x \in \Rec(T^p)$, proving (a). To check (b) assume moreover that $(X,T)$ is a linear dynamical system and fix any $\lambda \in \TT$. Let $\eps>0$ and let $U_0 \subset U$ be a neighbourhood of $x$ such that $\mu \cdot U_0 \subset U$ for all $|\mu-1|<\eps$. Since the set $\{ n \in \NN : |\lambda^n-1|<\eps \}$ belongs to $\Delta^*$ (see for instance \cite[Proposition~4.1]{GriLo2023}) we have that
	\[
	\varnothing \neq N_T(x,U_0) \cap \{ n \in \NN : |\lambda^n-1|<\eps \} \subset N_{\lambda T}(x,U),
	\]
	which implies that $x \in \Rec(\lambda T)$.
\end{proof}

It is worth mentioning that, since the families $\Delta^*$ and $\IP^*$ are filters (see \cite{BerDown2008}), then the proof of Proposition~\ref{Pro:Ansari+Leon-Muller} can be adapted to show the equalities
\[
\Delta^*\Rec(T^p)=\Delta^*\Rec(T)=\Delta^*\Rec(\lambda T),
\]
and also
\[
\IP^*\Rec(T^p)=\IP^*\Rec(T)=\IP^*\Rec(\lambda T).
\]
The $\IP^*$-version of these inequalities was already discussed in \cite[Section~6]{BoGrLoPe2022}, in a different way via the so-called {\em product recurrent points}, but the $\Delta^*$-version is new as far as we know.\\[-5pt]

Let us now look at the {\em strongest} possible recurrence notion by using families formed by very big sets of natural numbers that have been also used in Linear Dynamics:

\begin{example}\label{Exa:extra.families}
	Consider the Furstenberg families formed by:
	\begin{enumerate}[(a)]
		\item the {\em cofinite sets} $\Ic^* := \{ A \subset \NN_0 : A \text{ is cofinite} \}$, which is the dual family of that formed by the infinite sets $\Ic$, and which is used to define the notion of (topological) {\em mixing}: a dynamical system $(X,T)$ is called {\em mixing} if the set $\{ n \in \NN_0 : T^{n}(U)\cap V\neq \varnothing \}$ belongs to $\Ic^*$ for every pair of non-empty open subset $U,V \subset X$ (see \cite[Definition~1.38]{GrPe2011_book}).
		
		\item the {\em thick sets} $\Tc := \{ A \subset \NN_0 : A \text{ is thick} \}$, i.e.\ $A \in \Tc$ if $\forall m \in \NN, \ \exists a_m \in A$ with $[a_m,a_m+m] \subset A$. This family is known to characterize the {\em weak-mixing} property (see \cite[Theorem~3]{GrPe2010} or \cite[Theorem~1.54]{GrPe2011_book}): {\em for every dynamical system $(X,T)$ the following statements are equivalent:
		\begin{enumerate}[{\em(i)}]
			\item $(X,T)$ is weakly mixing;
		
			\item the set $\{ n \in \NN_0 : T^{n}(U)\cap V\neq \varnothing \}$ belongs to $\Tc$ for every pair of non-empty open subsets $U,V \subset X$.
		\end{enumerate}}
		
		\item the {\em thickly syndetic sets} $\TS := \{ A \subset \NN_0 : A \text{ is thickly syndetic} \}$, i.e.\ $A \in \TS$ if $\forall m \in \NN, \ \exists A_m$ syndetic such that $A_m+[0,m]\subset A$. This family characterizes the {\em topological ergodicity} (i.e.\ the property that $\{ n \in \NN_0 : T^{n}(U)\cap V\neq \varnothing \}$ is syndetic for every pair of non-empty open subsets $U,V \subset X$) for the particular case of \textbf{linear} dynamical systems (see \cite[Exercise~2.5.4]{GrPe2011_book} and \cite{BMPP2019}): {\em for every linear dynamical system $(X,T)$ the following statements are equivalent:
			\begin{enumerate}[{\em(i)}]
				\item $(X,T)$ is topologically ergodic;
				
				\item the set $\{ n \in \NN_0 : T^{n}(U)\cap V\neq \varnothing \}$ belongs to $\TS$ for every pair of non-empty open subsets $U,V \subset X$.
		\end{enumerate}}
		
		\item the {\em sets of density greater than $\delta>0$}, which for each $0<\delta\leq 1$ are the families
		\begin{eqnarray}
			\BDinf_{\del} := \{ A \subset \NN_0 : \Bdinf(A)\geq\del \}, &\quad& \BDsup_{\del} := \{ A \subset \NN_0 : \Bdsup(A)\geq\del \},\nonumber\\[5pt]
			\Dinf_{\del} := \{ A \subset \NN_0 : \dinf(A)\geq\del \}, &\quad& \Dsup_{\del} := \{ A \subset \NN_0 : \dsup(A)\geq\del \}.\nonumber
		\end{eqnarray}
		The density families have also been studied in Linear Dynamics, and we refer the reader to \cite{BMPP2016,BMPP2019} for more about them.
	\end{enumerate}
\end{example}

We are about to show that the recurrence notions associated to the Furstenberg families introduced in Example~\ref{Exa:extra.families} above imply different periodicity notions (see Proposition~\ref{Pro:periodic-recurrence}). We replicate the arguments from \cite[Proposition~3]{BMPP2016}, which have been used in an independent way in \cite[Lemma~3.17 and Corollary~3.18]{CarMur2022_arXiv}. Our contribution here is to rewrite these results in their ``{\em pointwise $\Fc$-recurrence}'' version. We start by proving two key lemmas:

\begin{lemma}\label{Lem:periodic-points}
	Let $(X,T)$ be a dynamical system. Given $x \in X$ and $N \in \NN$ the following statements are equivalent:
	\begin{enumerate}[{\em(i)}]
		\item $x \in \Per(T)$ and its period is strictly lower than $N$;
		
		\item for every neighbourhood $U$ of $x$ there exists $n_U \in \NN_0$ such that
		\[
		\# \ N_T(x,U) \cap [n_U+1,n_U+N] \geq 2;
		\]
		
		\item for every neighbourhood $U$ of $x$ we have that $T^p(U)\cap U\neq\varnothing$ for some $1\leq p< N$.
	\end{enumerate}
\end{lemma}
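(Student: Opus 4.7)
The plan is to prove the chain $(i) \Rightarrow (ii) \Rightarrow (iii) \Rightarrow (i)$. The first two implications are essentially bookkeeping, while the last one is the crucial step where I use the Hausdorff hypothesis on $X$ together with the continuity of $T$.

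For $(i) \Rightarrow (ii)$, suppose $x$ has period $p$ with $1 \leq p < N$. Then $T^{kp}x = x \in U$ for every $k \in \NN_0$ and every neighbourhood $U$ of $x$, so $p\NN_0 \subset N_T(x,U)$. Since any interval of length $N > p$ contains at least two consecutive multiples of $p$, choosing $n_U := 0$ (for instance) gives $\# N_T(x,U) \cap [1,N] \geq 2$.

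For $(ii) \Rightarrow (iii)$, take any neighbourhood $U$ of $x$, let $n_U$ be as in $(ii)$, and pick two distinct integers $m_1 < m_2$ in $N_T(x,U) \cap [n_U+1,n_U+N]$. Set $p := m_2 - m_1$, so that $1 \leq p \leq N-1 < N$. Since $T^{m_1}x \in U$ and $T^p(T^{m_1}x) = T^{m_2}x \in U$, we obtain $T^{m_2}x \in T^p(U) \cap U$, showing $(iii)$.

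For $(iii) \Rightarrow (i)$, I argue by contraposition: assume that $T^p x \neq x$ for every $p \in \{1,2,\dots,N-1\}$, and I produce a neighbourhood of $x$ violating $(iii)$. For each such $p$, since $X$ is Hausdorff there exist disjoint open sets $A_p \ni x$ and $B_p \ni T^p x$. Using the continuity of $T^p$, I shrink $A_p$ to an open neighbourhood $V_p$ of $x$ satisfying both $V_p \subset A_p$ and $T^p(V_p) \subset B_p$, so that $T^p(V_p) \cap V_p \subset B_p \cap A_p = \varnothing$. Then the intersection $U := \bigcap_{p=1}^{N-1} V_p$ is a neighbourhood of $x$ with $T^p(U) \cap U = \varnothing$ for every $1 \leq p < N$, contradicting $(iii)$. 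Hence some $p \in \{1,\dots,N-1\}$ satisfies $T^p x = x$, and the period of $x$ divides this $p$, in particular it is strictly less than $N$.

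The main (and only non-routine) obstacle is the finite pigeonhole-type argument in $(iii) \Rightarrow (i)$: one must avoid the pitfall of extracting a single $p$ from a sequence of shrinking neighbourhoods (which would require first-countability), and instead exploit the fact that the finite set $\{1,\dots,N-1\}$ of exponents lets me build a single bad neighbourhood $U$ as a finite intersection, which works in an arbitrary Hausdorff space.
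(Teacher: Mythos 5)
Your overall route is the same as the paper's (the chain (i) $\Rightarrow$ (ii) $\Rightarrow$ (iii) $\Rightarrow$ (i)), and your implications (ii) $\Rightarrow$ (iii) and (iii) $\Rightarrow$ (i) are correct; in the latter you spell out, via Hausdorff separation and continuity of each $T^p$, exactly what the paper compresses into ``by continuity we can find an open neighbourhood $U$\dots'', and the finite intersection over $p\in\{1,\dots,N-1\}$ is indeed the right way to avoid any countability hypothesis.

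There is, however, a genuine (if easily repaired) error in your proof of (i) $\Rightarrow$ (ii). The claim that ``any interval of length $N>p$ contains at least two consecutive multiples of $p$'' is false, and the witness $n_U=0$ does not work in general: a block of $N$ consecutive integers is only guaranteed to contain $\lfloor N/p\rfloor$ multiples of $p$, which is $\geq 2$ only when $p\leq N/2$. Concretely, if $x$ has period $p=3$, $N=4$, and $U$ is small enough that $N_T(x,U)=3\NN_0$ exactly, then $N_T(x,U)\cap[1,4]=\{3\}$ has cardinality $1$, so your choice $n_U=0$ fails even though (ii) itself holds. The fix is the paper's choice $n_U:=p-1$: the interval $[p,\,p+N-1]$ contains both $p$ and $2p$, since $2p=p+p\leq p+N-1$ precisely because $p<N$. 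With that one-line correction your argument is complete.
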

\begin{proof}
	For (i) $\Rightarrow$ (ii) let $n_U$ be the period of $x$ minus one. For (ii) $\Rightarrow$ (iii) recall that
	\[
	\text{given } n_1<n_2 \in N_T(x,U) \quad \text{ we have that } \quad T^{n_2-n_1}(U)\cap U\neq\varnothing.
	\]
	Conversely, if we suppose that $T^px \neq x$ for $1\leq p<N$, by continuity we can find an open neighbourhood $U$ of $x$ such that $T^p(U)\cap U = \varnothing$ for every $1\leq p<N$, so (iii) $\Rightarrow$ (i).
\end{proof}

\begin{lemma}\label{Lem:BDsup-consecutive}
	Let $0<\del\leq 1$ and $N \in \NN$ with $\frac{1}{N}<\del$. Then, for every $A \in \BDsup_{\del}$ there is $n_A \in \NN_0$ such that
	\begin{equation*}
		\# \ A\cap [n_A+1,n_A+N]\geq 2.
	\end{equation*}
\end{lemma}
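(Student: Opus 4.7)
The plan is to argue by contradiction: assume that every block of $N$ consecutive integers $[n+1,n+N]$ (with $n \in \NN_0$) contains at most one element of $A$, and derive that $\Bdsup(A) \leq 1/N < \delta$, contradicting $A \in \BDsup_{\delta}$.

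To carry this out, fix any $M \in \NN$ and any $n \in \NN_0$, and partition the interval $[n+1,n+M]$ into consecutive blocks of length $N$, namely $[n+1,n+N], [n+N+1,n+2N], \ldots$, with possibly one shorter final block. There are at most $\lceil M/N \rceil$ such blocks, and by the contradiction hypothesis each contains at most one element of $A$. Therefore
\[
\frac{\#\, A \cap [n+1,n+M]}{M} \leq \frac{\lceil M/N \rceil}{M} \leq \frac{1}{N} + \frac{1}{M}.
\]
Taking the supremum over $n \geq 0$ and then letting $M \to \infty$ in the definition of $\Bdsup$ yields $\Bdsup(A) \leq 1/N$.

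Since by assumption $1/N < \delta \leq \Bdsup(A)$, this is the sought contradiction; hence some block $[n_A+1,n_A+N]$ must contain at least two elements of $A$. There is no real obstacle here: the only small point to be careful about is the counting of blocks (using $\lceil M/N \rceil$ rather than $M/N$), so that the error term $1/M$ vanishes in the limit and leaves exactly the bound $1/N$.
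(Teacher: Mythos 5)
Your proof is correct and follows essentially the same route as the paper: argue by contradiction, partition a long interval into consecutive blocks of length $N$ each containing at most one element of $A$, and conclude $\Bdsup(A)\leq 1/N<\delta$. The only cosmetic difference is that the paper evaluates the limit along interval lengths $M=N\cdot K$ (so the block count is exactly $K$ and no ceiling is needed), whereas you handle arbitrary $M$ with the $\lceil M/N\rceil$ bound and an $1/M$ error term; both are valid.
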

\begin{proof}
	Otherwise we would have that $\# \ A\cap [n+1,n+N] \leq 1$ for every $n \in \NN_0$ obtaining the contradiction
	\begin{equation*}
		\Bdsup(A) = \lim_{K \to \infty} \left( \max_{n \geq 0} \dfrac{\# \ A\cap [n+1,n+(N\cdot K)]}{N\cdot K} \right) \leq \lim_{K \to \infty} \dfrac{K}{N\cdot K} = \frac{1}{N} < \del.\qedhere
	\end{equation*}
\end{proof}

Finally we get the desired result, in which we classify the periodic points of a dynamical system in terms of the density of the return sets:

\begin{proposition}\label{Pro:periodic-recurrence}
	Let $(X,T)$ be a dynamical system and let $x \in X$. Then:
	\begin{enumerate}[{\em(a)}]
		\item Given any $0<\del\leq 1$, the following statements are equivalent:
		\begin{enumerate}[{\em(i)}]
			\item $x \in \Per(T)$ and its period is lower or equal to $\left\lfloor\frac{1}{\del}\right\rfloor$;
			
			\item $x \in \BDinf_{\del}\Rec(T)$;
			
			\item $x \in \Dinf_{\del}\Rec(T)$;
			
			\item $x \in \Dsup_{\del}\Rec(T)$;
			
			\item $x \in \BDsup_{\del}\Rec(T)$.
		\end{enumerate}
		In particular, if $T$ is $\BDsup_{\del}$-recurrent then $T^N = I$ for $N=1\cdot 2\cdots \left(\left\lfloor\frac{1}{\del}\right\rfloor-1\right)\cdot \left\lfloor\frac{1}{\del}\right\rfloor$.
		
		\item The following statements are equivalent:
		\begin{enumerate}[{\em(i)}]
			\item $x$ is a fixed point, i.e.\ $Tx=x$;
			
			\item $x \in \Ic^*\Rec(T)$;
			
			\item $x \in \TS\Rec(T)$;
			
			\item $x \in \Tc\Rec(T)$;
			
			\item $x \in \BDsup_{\del}\Rec(T)$ for some $\del>\frac{1}{2}$;
			
			\item for every neighbourhood $U$ of $x$ the set $N(x,U)$ contains two consecutive integers.
		\end{enumerate}
	\end{enumerate}
\end{proposition}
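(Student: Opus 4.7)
The plan is to assemble the proposition from the two preceding lemmas (Lemma~\ref{Lem:periodic-points} and Lemma~\ref{Lem:BDsup-consecutive}) together with standard inclusions between the density families. The key observation is that the bound $\lfloor 1/\delta \rfloor$ in part (a) is exactly calibrated so that Lemma~\ref{Lem:BDsup-consecutive} can be applied with $N := \lfloor 1/\delta \rfloor + 1$, which gives $1/N < \delta$, and the resulting period $<N$ translates into period $\leq \lfloor 1/\delta \rfloor$.

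For part (a), I would first note the chain of inequalities $\Bdinf(A) \leq \dinf(A) \leq \dsup(A) \leq \Bdsup(A)$, which trivially yields (ii) $\Rightarrow$ (iii) $\Rightarrow$ (iv) $\Rightarrow$ (v). The implication (i) $\Rightarrow$ (ii) is a direct check: if $x$ has period $p \leq \lfloor 1/\delta \rfloor$, then $p\NN_0 \subset N(x,U)$ for every neighbourhood $U$ of $x$, and $\Bdinf(p\NN_0) = 1/p \geq \delta$. The main step is (v) $\Rightarrow$ (i). Given $x \in \BDsup_\delta\Rec(T)$ and any neighbourhood $U$ of $x$, set $N := \lfloor 1/\delta \rfloor + 1$ so that $1/N < \delta$; applying Lemma~\ref{Lem:BDsup-consecutive} to $N(x,U) \in \BDsup_\delta$ yields some $n_U$ with $\# N(x,U) \cap [n_U+1, n_U+N] \geq 2$. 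Since this holds for every neighbourhood $U$, Lemma~\ref{Lem:periodic-points} gives $x \in \Per(T)$ with period strictly less than $N$, i.e.\ at most $\lfloor 1/\delta \rfloor$. The ``in particular'' assertion then follows by density and continuity: each $x \in \BDsup_\delta\Rec(T)$ has period dividing $N := 1 \cdot 2 \cdots \lfloor 1/\delta \rfloor$, so $T^N$ agrees with the identity on a dense set, hence on all of $X$.

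For part (b), the inclusions $\Ic^* \subset \TS \subset \Tc$ give (ii) $\Rightarrow$ (iii) $\Rightarrow$ (iv), and a thick set contains arbitrarily long intervals and hence in particular two consecutive integers, proving (iv) $\Rightarrow$ (vi). The implication (vi) $\Rightarrow$ (i) is Lemma~\ref{Lem:periodic-points} applied with $N=2$: two consecutive integers in $N(x,U)$ force period strictly less than $2$, i.e.\ $Tx = x$. The remaining implications (i) $\Rightarrow$ (ii) and (i) $\Rightarrow$ (v) are trivial (for the latter, $N(x,U) = \NN_0$ has upper Banach density $1 > 1/2$), and (v) $\Rightarrow$ (i) is the case $\lfloor 1/\delta \rfloor = 1$ of part (a). I do not anticipate any genuine obstacle; the only delicate point is verifying that the choice $N = \lfloor 1/\delta \rfloor + 1$ gives the sharp bound claimed in (a), but this is automatic from the defining property of the floor function.
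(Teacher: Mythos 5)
Your proposal is correct and follows essentially the same route as the paper: the chain of density inequalities for the easy implications, Lemma~\ref{Lem:BDsup-consecutive} with $N=\lfloor 1/\delta\rfloor+1$ combined with Lemma~\ref{Lem:periodic-points} for the key implication (v) $\Rightarrow$ (i) in (a), and the inclusions $\Ic^*\subset\TS\subset\Tc\subset\BDsup_{\delta}$ for (b). The only (harmless) deviations are that you close the cycle in (b) slightly differently — proving (iv) $\Rightarrow$ (vi) directly and handling (v) via part (a) rather than via Lemma~\ref{Lem:BDsup-consecutive} with $N=2$ — and that you spell out the density-plus-continuity argument for the ``in particular'' claim, which the paper leaves implicit.
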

\begin{proof}
	(a): Let us first show that (i) $\Rightarrow$ (ii): let $p \in \left\{ 1,2,...,\left\lfloor\frac{1}{\del}\right\rfloor \right\}$ be such that $T^px=x$. Then 
	\[
	\Bdinf(N_T(x,U)) \geq \Bdinf(p\cdot\NN_0) = \frac{1}{p} \geq \delta
	\]
	for every neighbourhood $U$ of $x$, so $N(x,U) \in \BDsup_{\delta}$. For (ii) $\Rightarrow$ (iii) $\Rightarrow$ (iv) $\Rightarrow$ (v) recall that $\BDinf_{\del} \subset \Dinf_{\del} \subset \Dsup_{\del} \subset \BDsup_{\del}$. For (v) $\Rightarrow$ (i) let $N := \left\lfloor\frac{1}{\del}\right\rfloor + 1$. By Lemma~\ref{Lem:BDsup-consecutive}, for every neighbourhood $U$ of $x$ there is $n_U \in \NN$ such that
	\[
	\# \ N_T(x,U) \cap [n_U+1,n_U+N] \geq 2,
	\]
	so Lemma~\ref{Lem:periodic-points} finishes the work.\newpage
	
	(b): The implications (i) $\Rightarrow$ (ii) $\Rightarrow$ (iii) $\Rightarrow$ (iv) $\Rightarrow$ (v) follow from the well-known relations $\NN_0 \in \Ic^* \subset \TS \subset \Tc = \BDsup_1 \subset \BDsup_{\del}$ for every $0<\delta\leq 1$ (see \cite{BerDown2008,HS1998}). To prove (v) $\Rightarrow$ (vi) use Lemma~\ref{Lem:BDsup-consecutive} applied to $N=2$. Finally, (vi) $\Leftrightarrow$ (i) follows from Lemma~\ref{Lem:periodic-points}.
\end{proof}

The characterization obtained for periodic points in part (a) of Proposition~\ref{Pro:periodic-recurrence} tells us which are the Furstenberg families $\Fc$ whose respective $\Fc$-recurrence notion is trivial, in the sense that it coincides with periodicity. However, it is still interesting to study all the recurrence notions introduced in Example~\ref{Exa:F-rec-hyp} since clearly $\Per(T) \subset \Delta^*\Rec(T)$ for every dynamical system $(X,T)$. We have also reproved \cite[Proposition~3]{BMPP2016}:

\begin{corollary}
	There is no $\BDsup_{\delta}$-hypercyclic operator for any $0<\delta\leq 1$.
\end{corollary}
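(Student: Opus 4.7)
The plan is to deduce the corollary directly from part (a) of Proposition~\ref{Pro:periodic-recurrence} by arguing that a $\BDsup_{\delta}$-hypercyclic vector would have to be simultaneously periodic and have dense orbit, which is impossible in the infinite-dimensional setting where the whole paper takes place.

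First I would observe the (trivial but crucial) inclusion $\BDsup_{\delta}\HC(T) \subset \BDsup_{\delta}\Rec(T)$: if $x$ is $\BDsup_{\delta}$-hypercyclic for $T$, then by definition $N_T(x,U) \in \BDsup_{\delta}$ for every non-empty open $U \subset X$, and in particular for every neighbourhood of $x$ itself, so $x \in \BDsup_{\delta}\Rec(T)$. Then Proposition~\ref{Pro:periodic-recurrence}(a), applied to $x$, forces $x \in \Per(T)$ with period at most $\lfloor 1/\delta \rfloor$.

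Next I would derive the contradiction. Since $x$ is periodic, the orbit $\orb(x,T)$ is a finite set. On the other hand, $\BDsup_{\delta}$-hypercyclicity of $x$ implies that $\orb(x,T)$ is dense in $X$ (because $\BDsup_{\delta} \subset \Ic$, so $x \in \HC(T)$). But a separable infinite-dimensional F-space, being in particular a Hausdorff topological vector space without isolated points, cannot contain a finite dense subset. This contradiction shows that no such $x$ exists, so no $\BDsup_{\delta}$-hypercyclic operator on $X$ exists either.

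There is essentially no obstacle here: the whole argument is a two-line deduction from Proposition~\ref{Pro:periodic-recurrence}(a), once one notices that hypercyclicity in particular implies recurrence at the point itself. The only thing worth stating explicitly in the writeup is the innocuous topological fact that a separable infinite-dimensional F-space admits no finite dense subset, which follows from the absence of isolated points in any nontrivial topological vector space.
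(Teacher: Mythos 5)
Your proposal is correct and is exactly the argument the paper intends: the corollary is stated as an immediate consequence of Proposition~\ref{Pro:periodic-recurrence}(a), via the observation that a $\BDsup_{\delta}$-hypercyclic vector is in particular $\BDsup_{\delta}$-recurrent, hence periodic with finite orbit, which cannot be dense in an infinite-dimensional space. No gaps.
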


\subsection{Quasi-conjugacies and commutants in $\Fc$-recurrence}\label{SubSec:4.3qc.c}

We end Section~\ref{Sec:4F-rec} by presenting some basic but useful tools regarding {\em quasi-conjugacies} and (non-linear) {\em commutants} which will be used in Section~\ref{Sec:5lin+dense.lin} to study the {\em lineability} and {\em dense lineability} properties of the set of $\Fc$-recurrent vectors. Following \cite{Furstenberg1981_book,GrPe2011_book} we define:

\begin{definition}\label{Def:homeo.quasi-con.con}
	Given two dynamical systems $(X,T)$ and $(Y,S)$, we say that a map $\phi:Y\longrightarrow X$ is an {\em homomorphism of dynamical systems} if $\phi$ is continuous and the diagram 
	\begin{equation*}
		\begin{CD}
			Y  @>{S}>>  Y \\
			@V{\phi}VV  @VV{\phi}V \\
			X  @>{T}>>  X
		\end{CD}
	\end{equation*}
	commutes, i.e.\ $\phi\circ S = T\circ\phi$. Moreover, we say that the map $\phi$ is a:
	\begin{enumerate}[--]
		\item {\em quasi-conjugacy}, if $\phi$ has dense range, and hence that $(X,T)$ is {\em quasi-conjugate} to $(Y,S)$;
		
		\item {\em conjugacy}, if $\phi$ is an homeomorphism between $X$ and $Y$ (see Remark~\ref{Rem:complexification}).
	\end{enumerate}
\end{definition}

Many dynamical properties are preserved by quasi-conjugacy (see \cite[Chapter~1]{GrPe2011_book}): if $(Y,S)$ admits a dense orbit, is topologically transitive, weakly-mixing or even (Devaney) chaotic, then so is $(X,T)$. On the other hand, to preserve the {\em recurrence} of a point it is enough to have an homomorphism. This is a really well-known fact (see \cite[Proposition 1.3]{Furstenberg1981_book}), and indeed similar arguments were already used in \cite[Proposition 9.9]{Furstenberg1981_book} for $\Fc$-recurrence with respect to the Furstenberg families $\Del^*$ and $\IP^*$, or in \cite[Proposition~3.8]{CarMur2022_arXiv} for general Furstenberg families. We include here the general argument regarding return sets:

\begin{lemma}\label{Lem:homomorphism}
	Let $(X,T)$ and $(Y,S)$ be dynamical systems and let $\phi:Y\longrightarrow X$ be an homomorphism between them. Given $y\in Y$ and any neighbourhood $U \subset X$ of $\phi(y)$, the set $V:=\phi^{-1}(U) \subset Y$ is a neighbourhood of $y$ for which $N_S(y,V) = N_T(\phi(y),U)$. In particular, for any Furstenberg family $\Fc$ we have that:
	\begin{enumerate}[{\em(a)}]
		\item If $y \in Y$ is $\Fc$-recurrent for $S$ then $\phi(y)$ is $\Fc$-recurrent for $T$.
		
		\item If $\phi$ is a quasi-conjugacy and $S$ is $\Fc$-recurrent then so is $T$.
	\end{enumerate}
\end{lemma}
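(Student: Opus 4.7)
The plan is to first establish the key equality $N_S(y,V) = N_T(\phi(y),U)$, which is essentially a direct computation from the commutation relation $\phi \circ S = T \circ \phi$, and then derive parts (a) and (b) as immediate consequences.

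First I would observe that $V:=\phi^{-1}(U)$ is a neighbourhood of $y$: since $\phi$ is continuous and $\phi(y) \in U$ is open (or at least contains an open neighbourhood of $\phi(y)$), $V$ is open (or contains an open neighbourhood of $y$). Then I would iterate the commutation relation $\phi \circ S = T \circ \phi$ to get $\phi \circ S^n = T^n \circ \phi$ for every $n \in \NN_0$. With this in hand, the chain of equivalences
\[
n \in N_S(y,V) \ \Longleftrightarrow \ S^n y \in V = \phi^{-1}(U) \ \Longleftrightarrow \ \phi(S^n y) = T^n \phi(y) \in U \ \Longleftrightarrow \ n \in N_T(\phi(y),U)
\]
gives the desired set equality $N_S(y,V) = N_T(\phi(y),U)$.

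For part (a), assume $y \in \Fc\Rec(S)$ and let $U$ be any neighbourhood of $\phi(y)$. Then $V := \phi^{-1}(U)$ is a neighbourhood of $y$, so $N_S(y,V) \in \Fc$ by $\Fc$-recurrence of $y$. Using the equality just established, $N_T(\phi(y),U) = N_S(y,V) \in \Fc$, which shows $\phi(y) \in \Fc\Rec(T)$.

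For part (b), assume $\phi$ is a quasi-conjugacy and $S$ is $\Fc$-recurrent, so $\Fc\Rec(S)$ is dense in $Y$. By continuity of $\phi$ and density of the range of $\phi$, the image $\phi(\Fc\Rec(S))$ is dense in $\phi(Y)$, which is itself dense in $X$; hence $\phi(\Fc\Rec(S))$ is dense in $X$. Part (a) gives $\phi(\Fc\Rec(S)) \subset \Fc\Rec(T)$, so $\Fc\Rec(T)$ is dense in $X$ and $T$ is $\Fc$-recurrent. There is no real obstacle here: the whole argument is a routine unpacking of the homomorphism property and the definitions.
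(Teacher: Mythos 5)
Your proof is correct and follows essentially the same route as the paper: establish $\phi\circ S^n = T^n\circ\phi$, deduce the return-set equality $N_S(y,\phi^{-1}(U)) = N_T(\phi(y),U)$, and read off (a) and (b). The only difference is that you spell out the density argument for (b), which the paper leaves as "follows immediately from (a)".
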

\begin{proof}
	By continuity of $\phi$ the set $V=\phi^{-1}(U)$ is a neighbourhood of $y$. Since $\phi\circ S = T\circ\phi$ we have that $\phi(S^ny) = T^n\phi(y)$ for every $n \in \NN_0$. It follows that
	\[
	N_T(\phi(y),U) = \{n\in\NN_0 : T^n\phi(y) \in U\} = \{n\in\NN_0 : S^ny \in \phi^{-1}(U)\} = N_S(y,V).
	\]
	In particular, if $y$ is $\Fc$-recurrent then the return set $N_T(\phi(y),U) = N_S(y,\phi^{-1}(U))$ belongs to $\Fc$ for every neighbourhood $U$ of $\phi(y)$. Statement (b) follows immediately from (a).
\end{proof}

Given a dynamical system $(X,T)$ and a continuous map $S:X\longrightarrow X$ commuting with $T$, then $S$ is homomorphism between $T$ and itself, and Lemma~\ref{Lem:homomorphism} implies the following:

\begin{corollary}\label{Cor:return-sets-commute}
	Let $(X,T)$ be a dynamical system and let $S:X\longrightarrow X$ be a continuous map commuting with $T$, that is $S \circ T = T \circ S$. Given $x \in X$ and any neighbourhood $U$ of $Sx$, the set $V:=S^{-1}(U)$ is a neighbourhood of $x$ for which $N_T(x,V) = N_T(Sx,U)$. In particular, for any Furstenberg family $\Fc$ we have that $S(\Fc\Rec(T)) \subset \Fc\Rec(T)$.
\end{corollary}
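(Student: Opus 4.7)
The plan is to obtain Corollary~\ref{Cor:return-sets-commute} as a direct specialization of the preceding lemma. I will apply that lemma to the two dynamical systems $(X,T)$ and $(Y,S'):=(X,T)$ (so both systems are identical), taking the homomorphism to be $\phi := S$. The homomorphism condition $\phi \circ S' = T \circ \phi$ then reads $S \circ T = T \circ S$, which is precisely the commutativity assumption on $S$ and $T$. Hence $S$ is a (not necessarily quasi-conjugacy) homomorphism from $(X,T)$ into itself.

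With this identification, the first part of the lemma applied to the point $x \in X$ and a neighbourhood $U$ of $\phi(x) = Sx$ yields that $V := S^{-1}(U) = \phi^{-1}(U)$ is a neighbourhood of $x$ (by continuity of $S$) and that
\[
N_T(x,V) = N_{S'}(x,\phi^{-1}(U)) = N_T(\phi(x),U) = N_T(Sx,U),
\]
which is exactly the first claim in the statement.

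For the ``in particular'' part, fix any Furstenberg family $\Fc$ and any $x \in \Fc\Rec(T)$. I need to verify that $Sx \in \Fc\Rec(T)$, that is, that $N_T(Sx,U) \in \Fc$ for every neighbourhood $U$ of $Sx$. Given such a $U$, the set $V = S^{-1}(U)$ is a neighbourhood of $x$, so $N_T(x,V) \in \Fc$ because $x$ is $\Fc$-recurrent; the return-set equality above then transports this membership to $N_T(Sx,U) \in \Fc$, as desired. There is no real obstacle here: everything is a bookkeeping exercise once the correct choice of $(Y,S')$ and $\phi$ in the lemma is identified. The only mild subtlety is notational, namely making sure the symbol $S$ in the corollary does not clash with the $S$ in the lemma; replacing the second copy of the dynamical system by $(X,T)$ keeps the argument transparent.
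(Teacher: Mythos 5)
Your proposal is correct and is exactly the paper's argument: the paper obtains this corollary by specializing the preceding lemma with $Y:=X$, the lemma's $S$ taken to be $T$, and $\phi:=S$, which is precisely your choice of $(Y,S'):=(X,T)$ and $\phi:=S$. The verification of the homomorphism condition and the transport of $\Fc$-membership along the return-set equality proceed just as you describe.
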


Corollary~\ref{Cor:return-sets-commute} unifies some of the well-known arguments already used in \cite[Proposition~2.5]{AmBe2019_arXiv}, \cite[Proof of Theorem 9.1]{CoMaPa2014} and \cite[Propositions 1.3 and 9.9]{Furstenberg1981_book}, generalizing them to arbitrary Furstenberg families and (not necessarily linear) maps acting on arbitrary topological spaces. From now on we will write
\[
\Ci(X) := \left\{ S:X\longrightarrow X \text{ continuous map} \right\},
\]
i.e.\ the {\em set of continuous} (and not necessarily linear) {\em self-maps} of the Hausdorff topological space $X$, and following the notation of \cite{Jungck1976} we define:

\begin{definition}
	Let $(X,T)$ be a dynamical system. The (non-linear) {\em commutant} of $T$ is the set
	\[
	\Com_T := \left\{ S \in \Ci(X) : S \circ T = T \circ S \right\}.
	\]
	Given $x \in X$ we define the {\em $\Com_T$-orbit} of $x$ as $\Com_T(x) := \{ Sx : S \in \Com_T \}$. We say that a subset $Y \subset X$ is {\em $\Com_T$-invariant} if $S(Y) \subset Y$ for every $S \in \Com_T$.
\end{definition}

\begin{remark}
	Since $T \in \Com_T$ we always have that:
	\begin{enumerate}[--]
		\item every $\Com_T$-invariant set is also $T$-invariant;
		
		\item if $X$ is a linear space, then $\{ p(T) : p \text{ polynomial} \} \subset \Com_T$ and for every $x \in X$ we get that
		\[
		\lspan(\orb(x,T)) = \lspan\{T^nx : n \in \NN_0\} = \{ p(T)x : p \text{ polynomial} \} \subset \Com_T(x),
		\]
		so that every {\em cyclic} vector for an operator $T \in \Lc(X)$ has a {\em dense $\Com_T$-orbit}. Moreover:
	\end{enumerate}
\end{remark}

\begin{lemma}\label{Lem:Com_T-subspace}
	Let $(X,T)$ be a linear dynamical system. Then $(\Com_T,+,\circ)$ is a subring of the ring of continuous maps $(\Ci(X),+,\circ)$. In particular, given any $x \in X$ the set $\Com_T(x)$ is a vector subspace of $X$ and the smallest $\Com_T$-invariant subset of $X$ containing the point $x$.
\end{lemma}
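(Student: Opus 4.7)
The proof is essentially a straightforward verification, so the plan is mostly to check the relevant algebraic closure properties carefully.

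First I would verify that $\Com_T$ is a subring of $(\Ci(X),+,\circ)$. The ambient set $\Ci(X)$ is itself a (non-commutative, generally non-associative for $+$ with $\circ$ on the right in a ring sense -- but here $X$ is linear so everything works) ring under pointwise addition and composition, with the zero map as additive identity and the identity map $I$ as multiplicative identity. Observe that the zero map and $I$ both commute with $T$, so they belong to $\Com_T$. For $S_1,S_2\in\Com_T$, a one-line calculation gives
\[
(S_1+S_2)\circ T = S_1\circ T + S_2\circ T = T\circ S_1 + T\circ S_2 = T\circ(S_1+S_2),
\]
and similarly $(S_1\circ S_2)\circ T = S_1\circ (S_2\circ T) = S_1\circ(T\circ S_2) = (S_1\circ T)\circ S_2 = T\circ(S_1\circ S_2)$. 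Closure under additive inverses is trivial since $(-S)\circ T = -(S\circ T) = -(T\circ S) = T\circ(-S)$. This settles the subring claim.

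Next I would prove that $\Com_T(x)$ is a vector subspace of $X$. The key observation is that $\Com_T$ is also closed under scalar multiplication by elements of $\KK$: for $\lambda\in\KK$ and $S\in\Com_T$, we have $(\lambda S)\circ T = \lambda(S\circ T) = \lambda(T\circ S) = T\circ(\lambda S)$, since $T$ is linear. Combined with the additive closure from the first paragraph, this shows that $\Com_T$ is actually a (left) $\KK$-module. Given $S_1x,S_2x\in\Com_T(x)$ and $\lambda_1,\lambda_2\in\KK$, we then get
\[
\lambda_1(S_1x) + \lambda_2(S_2x) = (\lambda_1 S_1 + \lambda_2 S_2)x \in \Com_T(x),
\]
since $\lambda_1 S_1 + \lambda_2 S_2\in\Com_T$; and $0\in\Com_T(x)$ by applying the zero map. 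Hence $\Com_T(x)$ is a vector subspace.

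Finally I would establish the minimality statement. Since $I\in\Com_T$, the point $x=Ix$ lies in $\Com_T(x)$. The set $\Com_T(x)$ is $\Com_T$-invariant: for any $S\in\Com_T$ and any $y=S'x\in\Com_T(x)$ we have $Sy = (S\circ S')x\in\Com_T(x)$ because $S\circ S'\in\Com_T$ by the subring property. Conversely, if $Y\subset X$ is any $\Com_T$-invariant subset containing $x$, then for every $S\in\Com_T$ we have $Sx\in Y$, so $\Com_T(x)\subset Y$. This proves $\Com_T(x)$ is the smallest $\Com_T$-invariant subset containing $x$.

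There is no real obstacle here: every step is a one-line verification, and the only mild subtlety is to notice that the linearity of $T$ itself is what makes scalar multiples of elements of $\Com_T$ lie again in $\Com_T$, which is precisely what upgrades $\Com_T(x)$ from a mere subgroup to a vector subspace.
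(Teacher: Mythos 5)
Your proof is correct and follows essentially the same route as the paper's (which simply notes that $\Com_T$ is closed under linear combinations and composition, hence $\Com_T(x)$ is a subspace, and that any $\Com_T$-invariant set $Y$ containing $x$ satisfies $\Com_T(x)\subset\bigcup_{S\in\Com_T}S(Y)\subset Y$); you have merely written out the one-line verifications in full, correctly isolating the linearity of $T$ as what makes scalar multiples of commuting maps commute again.
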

\begin{proof}
	Given $S,R \in \Com_T$ and $\alpha,\beta \in \KK$ we have that $(\alpha S+\beta R) \in \Com_T$ and $(S \circ R) \in \Com_T$. In particular, given any $x \in X$ the set $\Com_T(x)$ is a vector subspace of $X$. Moreover, if $Y \subset X$ is a $\Com_T$-invariant subset of $X$ and $x \in Y$ then $\Com_T(x) = \left\{ Sx : S \in \Com_T \right\} \subset \bigcup_{S \in \Com_T} S(Y) \subset Y$.
\end{proof}

When $(X,T)$ is a non-linear system then $\Com_T(x)$ is still the {\em smallest $\Com_T$-invariant subset of $X$ containing the point $x \in X$}. Let us now generalize Corollary~\ref{Cor:return-sets-commute} to direct products by using the following notation: given a system $(X,T)$ and a subset $Y \subset X$ we will denote by
\[
\Nc_{pr}(Y) := \{ A \subset \NN_0 : N(x,U) \subset A \text{ for some } x \in Y \text{ and some neighbourhood } U \text{ of } x \},
\]
the {\em family of pointwise-recurrent return sets} of the points of $Y$. Note that $\Nc_{pr}(\{x\})$ is a filter if and only if $x \in \Rec(T)$: indeed, if $x \notin \Rec(T)$ then $\Nc_{pr}(\{x\})=\Part(\NN_0)$, which is not a filter; conversely, for any pair $U$ and $V$ of neighbourhoods for a recurrent vector $x \in \Rec(T)$ then we have that $N(x,U\cap V) \subset N(x,U) \cap N(x,V)$. Note also that a dynamical system $(X,T)$ is $\Fc$-recurrent if there exits a dense set $Y \subset X$ such that $\Nc_{pr}(Y) \subset \Fc$.

\begin{theorem}\label{The:F-rec-commute}
	Let $(X,T)$ be a (linear) dynamical system and let $\Fc$ be a Furstenberg family. Given $x \in \Fc\Rec(T)$ we have that
	\[
	\Nc_{pr}(\Com_T(x)) \ = \ \Nc_{pr}(\{x\}),
	\]
	and hence $\Com_T(x)$ is a $\Com_T$-invariant (vector) subspace of $X$ with the property that
	\[
	\Com_T(x)^N \ \subset \ \Fc\Rec(T_{(N)}) \quad \text{ for every } N \in \NN.
	\]
\end{theorem}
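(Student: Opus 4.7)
The plan is to first establish the equality $\Nc_{pr}(\Com_T(x))=\Nc_{pr}(\{x\})$ and then derive the $\Fc$-recurrence of arbitrary $N$-tuples from $\Com_T(x)^N$ by intersecting preimages of neighbourhoods.

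First I would check the trivial inclusion $\Nc_{pr}(\{x\})\subset\Nc_{pr}(\Com_T(x))$: since the identity belongs to $\Com_T$ we have $x\in\Com_T(x)$, and the inclusion follows immediately from the definition of $\Nc_{pr}$. For the reverse inclusion, pick $A\in\Nc_{pr}(\Com_T(x))$, so that $N_T(Sx,U)\subset A$ for some $S\in\Com_T$ and some neighbourhood $U$ of $Sx$. Applying Corollary~\ref{Cor:return-sets-commute} to the commuting map $S$, the set $V:=S^{-1}(U)$ is a neighbourhood of $x$ satisfying $N_T(x,V)=N_T(Sx,U)\subset A$, and hence $A\in\Nc_{pr}(\{x\})$. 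This proves the equality.

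Next, Lemma~\ref{Lem:Com_T-subspace} already guarantees that $\Com_T(x)$ is a $\Com_T$-invariant vector subspace of $X$ in the linear case (and a $\Com_T$-invariant subset in general), so nothing more is required on that front. For the final claim, I would fix $N\in\NN$ and take an arbitrary tuple $(S_1 x,\ldots,S_N x)\in\Com_T(x)^N$ together with any neighbourhood $W$ of it in $X^N$. Shrinking if necessary, $W$ contains a basic neighbourhood $U_1\times\cdots\times U_N$ with each $U_i$ a neighbourhood of $S_i x$. The key identity is
\[
N_{T_{(N)}}\bigl((S_1 x,\ldots,S_N x),\,U_1\times\cdots\times U_N\bigr)=\bigcap_{i=1}^{N} N_T(S_i x,U_i),
\]
and by Corollary~\ref{Cor:return-sets-commute} each $N_T(S_i x,U_i)$ coincides with $N_T(x,V_i)$ for the neighbourhood $V_i:=S_i^{-1}(U_i)$ of $x$. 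Setting $V:=V_1\cap\cdots\cap V_N$, which is again a neighbourhood of $x$, we obtain
\[
N_T(x,V)\subset\bigcap_{i=1}^{N} N_T(x,V_i)=N_{T_{(N)}}\bigl((S_1 x,\ldots,S_N x),\,U_1\times\cdots\times U_N\bigr)\subset N_{T_{(N)}}\bigl((S_1 x,\ldots,S_N x),W\bigr).
\]
Since $x\in\Fc\Rec(T)$ gives $N_T(x,V)\in\Fc$, and $\Fc$ is hereditarily upward, the return set on the right also belongs to $\Fc$, so $(S_1 x,\ldots,S_N x)\in\Fc\Rec(T_{(N)})$.

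There is essentially no obstacle here: the whole argument is a careful bookkeeping of return sets built on Corollary~\ref{Cor:return-sets-commute}. The only point worth highlighting is the step of replacing the individual neighbourhoods $V_i$ by their intersection $V$, which relies only on the fact that $\Nc_{pr}(\{x\})$ is closed under supersets (which follows from $\Fc$ being hereditarily upward together with monotonicity of $N_T(x,\cdot)$); this is precisely where the equality $\Nc_{pr}(\Com_T(x))=\Nc_{pr}(\{x\})$ is exploited in its strongest form.
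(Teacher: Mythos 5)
Your proof is correct and follows essentially the same route as the paper's: both arguments reduce everything to Corollary~\ref{Cor:return-sets-commute} and then treat the $N$-fold return set as an intersection of single-factor return sets, all of which are controlled by a single neighbourhood of $x$ (the paper phrases this as the filter property of $\Nc_{pr}(\{x\})$; you make the intersection $V=\bigcap_i V_i$ of the pulled-back neighbourhoods explicit, which is exactly how that filter property is verified). The only quibble is with your closing commentary: the intersection step uses that $\Nc_{pr}(\{x\})$ is closed under \emph{finite intersections} (via $N_T(x,\bigcap_i V_i)\subset\bigcap_i N_T(x,V_i)$), not merely under supersets, but your actual argument already does this correctly.
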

\begin{proof}
	Obviously $\Nc_{pr}(\{x\}) \subset \Nc_{pr}(\Com_T(x))$. The inclusion $\Nc_{pr}(\Com_T(x)) \subset \Nc_{pr}(\{x\})$ follows from Corollary~\ref{Cor:return-sets-commute}: given $y \in \Com_T(x)$ and any neighbourhood $U$ of $y$ there is a neighbourhood $V$ of $x$ such that
	\[
	N_T(y,U) = N_T(x,V) \in \Nc_{pr}(\{x\}).
	\]
	The rest of the result follows from the filter condition satisfied by the family $\Nc_{pr}(\{x\})$: indeed, given any $N \in \NN$ and a set $\{x_1,x_2,...,x_N\} \subset \Com_T(x)$ write
	\[
	z := (x_1,x_2,...,x_N) \in X^N.
	\]
	Then, given any neighbourhood $U \subset X^N$ of $z$ we can find neighbourhoods $U_i \subset X$ of $x_i$, for $1\leq i\leq N$, such that $U_1\times\cdots\times U_N \subset U$, and hence
	\[
	N_{T_{(N)}}(z,U) \supset \bigcap_{i=1}^N N_T(x_i,U_i) \in \Nc_{pr}(\{x\}) \subset \Fc.
	\]
	The arbitrariness of $U$ implies that $z \in \Fc\Rec(T_{(N)})$. Finally, if $(X,T)$ is a linear dynamical system, then the set $\Com_T(x)$ is a $\Com_T$-invariant vector subspace of $X$ by Lemma~\ref{Lem:Com_T-subspace}.
\end{proof}

\begin{remark}
	Theorem~\ref{The:F-rec-commute} is an extension of Corollary~\ref{Cor:return-sets-commute} to every $N$-fold direct product of a given dynamical system. As we have already mentioned, these kind of arguments were already used in \cite[Theorem~9.1]{CoMaPa2014} for usual recurrence and operators on Banach spaces, and in \cite[Proposition~3.8]{CarMur2022_arXiv} for arbitrary Furstenberg families.
\end{remark}

\section{Infinite-dimensional vector spaces in $\Fc\Rec(T)$}\label{Sec:5lin+dense.lin}

The objective of this section is to study, for a linear dynamical system $(X,T)$ and any arbitrary Furstenberg family $\Fc$, when the set of $\Fc$-recurrent vectors is {\em lineable} or {\em dense lineable}, i.e.\ to establish whether it contains a (possibly dense) infinite-dimensional vector subspace. Theorem~\ref{The:F-rec-commute} will be our main tool for this study, which motivation stems from the following two facts already shown in previous sections:
\begin{enumerate}[--]
	\item quasi-rigidity coincides with $\Fc(A)$-recurrence (see Proposition~\ref{Pro:qr.filter});
	
	\item quasi-rigidity of an operator $T$ implies that $\Rec(T)$ is dense lineable (see Proposition~\ref{Pro:qr->dense.lin}).
\end{enumerate}
It is then natural to ask whether the set $\Fc\Rec(T)$ contains an infinite-dimensional vector space for other Furstenberg families $\Fc$. We show that $\Fc\Rec(T)$ is lineable as soon as $T$ is $\Fc$-recurrent (see Theorem~\ref{The:lineability} and Corollary~\ref{Cor:lineability} below), and we obtain some (natural) sufficient conditions implying that $\Fc\Rec(T)$ is dense lineable (see Theorem~\ref{The:dense.lineability}), obtaining then the {\em Herrero-Bourdon theorem} for $\Fc$-hypercyclicity (see Subsection~\ref{SubSec:5.3dense.lin.F-hyp}).

\subsection{Lineability}

Given a vector space $X$ and a subset of vectors $Y \subset X$ with some property, we say that $Y$ is {\em lineable} if there exists an infinite-dimensional vector subspace $Z \subset X$ such that $Z\setminus\{0\} \subset Y$. In our case, given a linear dynamical system $(X,T)$ we always have that $0 \in \Fc\Rec(T)$ for every Furstenberg family $\Fc$, so the set of $\Fc$-recurrent vectors will be lineable if it admits an infinite-dimensional vector space (which includes the zero-vector).\\[-5pt]

In order to prove that $\Fc\Rec(T)$ is lineable as soon as $(X,T)$ is $\Fc$-recurrent, we will observe that the (span of the) {\em unimodular eigenvectors} are the only recurrent vectors whose orbits have a finite-dimensional linear span (see Lemma~\ref{Lem:finite.dimension} below). Recall that:

\begin{definition}
	Given a \textbf{complex-linear} dynamical system $T:X\longrightarrow X$, a vector $x \in X$ is called a {\em unimodular eigenvector} for $T$ if $x \neq 0$ and $Tx=\lambda x$ for some unimodular complex number $\lambda \in \TT$. We denote by $\Ec(T)$ the {\em set of unimodular eigenvectors} for $T$, i.e.\
	\[
	\Ec(T) = \{ x \in X\setminus\{0\} : Tx = \lambda x \text{ for some } \lambda \in \TT \}.
	\]
\end{definition}
	
Every finite linear combination of unimodular eigenvectors is a $\Del^*$-recurrent vector (see for instance \cite[Proposition 4.1]{GriLo2023}) and the following holds (see \cite[Proposition~2.33]{GrPe2011_book}):
\[
\Per(T) = \lspan\{ x \in X : Tx = e^{\alpha \pi i}x \text{ for some } \alpha \in \QQ \} \subset \lspan(\Ec(T)) \subset \Del^*\Rec(T).
\]
In order to treat both real and complex cases at the same time, we need a set of (\textbf{real}) vectors having an analogous recurrent-behaviour to that of unimodular eigenvectors:

\begin{remark}[\textbf{A well-known conjugacy}]\label{Rem:complexification}
	The {\em complexification} $(\til{X},\til{T})$ of a \textbf{real-linear} system $T:X\longrightarrow X$ is defined in the following way (see \cite{MuSaTo1999,MoMuPeSe2022} and \cite[Exercise 2.2.7]{GrPe2011_book}):
	\begin{enumerate}[--]
		\item the space $\til{X} := \left\{ x + iy : x,y \in X \right\}$, which is topologically identified with $X\oplus X$, and becomes a \textbf{complex} F-space when endowed with the multiplication by complex scalars $(\alpha+i\beta)(x+iy) = (\alpha x - \beta y) + i(\alpha y + \beta x)$ for every $\alpha,\beta \in \RR$ and every $x,y \in X$;
		
		\item and the operator $\til{T}:\til{X}\longrightarrow\til{X}$ is defined as $\til{T}(x+iy) = Tx + iTy$ for every $x,y \in X$. It is a continuous \textbf{complex-linear} operator acting on $\til{X}$.
	\end{enumerate}
	Defining the map $J:X\oplus X\longrightarrow\til{X}$ as $J(x,y):=x+iy \in \til{X}$ for every $(x,y) \in X\oplus X$, the diagram
	\begin{equation*}
		\begin{CD}
			X\oplus X  @>{T\oplus T}>> X\oplus X \\
			@V{J}VV  @VV{J}V \\
			\til{X}  @>{\til{T}}>>  \til{X} \\
		\end{CD}
	\end{equation*}
	commutes so $J$ is a conjugacy (see Definition~\ref{Def:homeo.quasi-con.con}). In this setting we define the (\textbf{real}) {\em set of unimodular eigenvectors} for $T$ as
	\[
	\Ec(T) := \left\{ x \in X : \text{ there exists } y \in X \text{ such that } x+iy \in \Ec(\til{T}) \right\}.
	\]
	These \textbf{real} unimodular eigenvectors have the same recurrent-behaviour than the \textbf{complex} unimodular eigenvectors. In fact, the following properties are easily checked:
	\[
	\lspan(\Ec(T)) = \left\{ x \in X : \text{ there exists } y \in X \text{ such that } x+iy \in \lspan(\Ec(\til{T})) \right\},
	\]
	\[
	\Per(T) \subset \lspan(\Ec(T)) \subset \Del^*\Rec(T).
	\]
\end{remark}

\begin{lemma}\label{Lem:finite.dimension}
	Let $(X,T)$ be a (real or complex) linear dynamical system. For each recurrent vector $x \in \Rec(T)$ the following statements are equivalent:
	\begin{enumerate}[{\em(i)}]
		\item $x \in \lspan(\Ec(T))$;
		
		\item $\dim \left( \lspan\{T^nx : n \in \NN_0\} \right) < \infty$.
	\end{enumerate}
	In particular, given any Furstenberg family $\Fc$ for which $\Fc\Rec(T)\setminus\lspan(\Ec(T))\neq\varnothing$, there exists an infinite-dimensional $T$-invariant vector subspace $Z \subset X$ with the property that $Z^N \subset \Fc\Rec(T_{(N)})$ for every $N \in \NN$.
\end{lemma}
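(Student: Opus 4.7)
The plan is to establish the equivalence (i)~$\Leftrightarrow$~(ii) and then derive the \emph{in particular} statement from Theorem~\ref{The:F-rec-commute}.

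For (i)~$\Rightarrow$~(ii) I would argue directly. In the complex case, if $x=\sum_{j=1}^{m}\alpha_{j}v_{j}$ with $Tv_{j}=\lambda_{j}v_{j}$ and $\lambda_{j}\in\TT$, then $T^{n}x=\sum_{j=1}^{m}\alpha_{j}\lambda_{j}^{n}v_{j}\in\lspan\{v_{1},\ldots,v_{m}\}$ for every $n\in\NN_{0}$, so the orbit span is finite-dimensional. In the real case, by Remark~\ref{Rem:complexification} there exists $y\in X$ with $x+iy\in\lspan(\Ec(\til{T}))$; applying the complex case to $\til{T}$ and $x+iy$ shows that $\lspan_{\CC}\{\til{T}^{n}(x+iy):n\in\NN_{0}\}$ is finite-dimensional in $\til{X}$, and projecting onto the real part (via the identification $\til{X}\simeq X\oplus X$) yields that $\lspan\{T^{n}x:n\in\NN_{0}\}$ is also finite-dimensional.

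For (ii)~$\Rightarrow$~(i), set $Z_{x}:=\lspan\{T^{n}x:n\in\NN_{0}\}$, a finite-dimensional (hence closed) $T$-invariant subspace of $X$. The restriction $S:=T|_{Z_{x}}$ admits $x$ as a cyclic vector, and $x$ remains recurrent for $S$ because the return sets of $x$ to $Z_{x}$-neighbourhoods coincide with those computed in $X$. Since $\{Rx:R\in\Com_{S}\}\supset\{p(S)x:p\text{ polynomial}\}=\lspan(\orb(x,S))=Z_{x}$ is dense in $Z_{x}$, Proposition~\ref{Pro:sufficient-quasi-rigid}(b) gives that $S$ is quasi-rigid: there exist an increasing sequence $(n_{k})_{k\in\NN}$ and a dense subset $Y\subset Z_{x}$ with $S^{n_{k}}y\to y$ for every $y\in Y$. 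Any dense set in the finite-dimensional space $Z_{x}$ contains a basis, so by linearity $S^{n_{k}}v\to v$ for every $v\in Z_{x}$; equivalently, $S^{n_{k}}\to I$ as operators on $Z_{x}$. I would then invoke the Jordan form of $S$ (over $\CC$ directly, or of its complexification $\til{S}$ on $\til{Z_{x}}$ in the real case, where $\til{S}^{n_{k}}\to I$ still holds) to conclude that $S$ is semisimple with spectrum in $\TT$: a Jordan block of size $\ge 2$ at any eigenvalue $\lambda$ would force off-diagonal entries of $S^{n_{k}}$ to grow like $n_{k}\lambda^{n_{k}-1}$, contradicting $S^{n_{k}}\to I$; and any eigenvalue $\lambda$ with $|\lambda|\neq 1$ would give $\lambda^{n_{k}}\not\to 1$. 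Hence $Z_{x}$ is spanned by the unimodular eigenvectors of $S$, and in particular $x\in\lspan(\Ec(S))\subset\lspan(\Ec(T))$.

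For the \emph{in particular} assertion, pick any $x\in\Fc\Rec(T)\setminus\lspan(\Ec(T))$. By the contrapositive of the equivalence just proved, $Z:=\lspan\{T^{n}x:n\in\NN_{0}\}$ is infinite-dimensional; it is $T$-invariant by construction, and since $p(T)\in\Com_{T}$ for every polynomial $p$ we also have $Z\subset\Com_{T}(x)$. Theorem~\ref{The:F-rec-commute} then yields $Z^{N}\subset\Com_{T}(x)^{N}\subset\Fc\Rec(T_{(N)})$ for every $N\in\NN$, finishing the proof. The main obstacle I anticipate is the real-versus-complex bookkeeping in the second step, where the Jordan decomposition lives naturally over $\CC$; this is handled by passing to the complexification $\til{S}$ so that the eigenvalue and Jordan-block analysis is performed in the complex setting, and the resulting description of $x$ as a linear combination of unimodular eigenvectors of $\til{S}$ translates back to $X$ via the definition of $\Ec(T)$ in Remark~\ref{Rem:complexification}.
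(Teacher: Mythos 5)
Your proof is correct, and its overall skeleton matches the paper's: a direct orbit computation for (i)~$\Rightarrow$~(ii), restriction to the finite-dimensional orbit span for (ii)~$\Rightarrow$~(i), and Theorem~\ref{The:F-rec-commute} for the ``in particular'' clause. The genuine difference is in the key step of (ii)~$\Rightarrow$~(i). The paper observes that $T\res_E$ (with $E=\lspan\{T^nx:n\in\NN_0\}$) is a \emph{recurrent} operator on a finite-dimensional complex space and then simply cites \cite[Theorem~4.1]{CoMaPa2014}, which asserts that such an operator has a basis of unimodular eigenvectors; in the real case it passes through the identification $\til{T\res_E}\simeq T\res_E\oplus T\res_E$ and Theorem~\ref{The:F-rec-commute} to transfer recurrence to the complexification. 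You instead re-prove the needed finite-dimensional classification from scratch: recurrence plus cyclicity of $x$ for $S=T\res_{Z_x}$ yields (via Proposition~\ref{Pro:sufficient-quasi-rigid}, or even more directly from $T^{n_k}p(T)x=p(T)T^{n_k}x\to p(T)x$) that $S^{n_k}\to I$ in operator norm, and the Jordan-form analysis then rules out non-unimodular eigenvalues and nontrivial Jordan blocks. This buys self-containedness --- you do not need the external citation --- and your treatment of the real case is arguably cleaner, since $\til{S}^{n_k}\to I$ follows immediately from $S^{n_k}\to I$ without invoking Theorem~\ref{The:F-rec-commute} a second time. The cost is a slightly longer argument reproducing, in the cyclic case, a result the authors prefer to quote. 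All the delicate points (that $x$ stays recurrent for the restriction because the orbit lies in $Z_x$, that a dense subset of a finite-dimensional space contains a basis, that $\Fc\Rec(T)\subset\Rec(T)$ so the equivalence applies in the last paragraph, and that eigenvectors of $\til S$ are eigenvectors of $\til T$) are handled correctly.
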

\begin{proof}
	For (i) $\Rightarrow$ (ii) compute the orbit of a vector from $\lspan(\Ec(T))$ in both real and complex cases, and observe that its span is finite-dimensional. For (ii) $\Rightarrow$ (i) let $x \in \Rec(T)$ and suppose that the $T$-invariant subspace $E=\lspan\{ T^nx : n \in \NN_0 \}$ is finite-dimensional:
	\begin{enumerate}[(1)]
		\item If $(X,T)$ is complex, then $T\res_E:E\longrightarrow E$ is a recurrent complex-linear operator on a finite-dimensional space. By \cite[Theorem 4.1]{CoMaPa2014} there exists a basis of $E$ formed by unimodular eigenvectors for $T\res_E$ (and hence for $T$) so $x \in E \subset \lspan(\Ec(T))$.
		
		\item If $(X,T)$ is real, then we can identify the complexification $\til{T\res_E}:\til{E}\longrightarrow\til{E}$ with the direct sum $T\res_E\oplus T\res_E$ (see Remark~\ref{Rem:complexification}). Theorem~\ref{The:F-rec-commute} implies that
		\[
		E\oplus E \subset \Rec(T\res_E\oplus T\res_E),
		\]
		so $\til{T\res_E}$ is a recurrent complex-linear operator on a finite-dimensional space. As in case (1) above, by \cite[Theorem 4.1]{CoMaPa2014} there exists a basis of $\til{E}$ formed by unimodular eigenvectors for $\til{T}$, so $x+i0 \in \til{E} \subset \lspan(\Ec(\til{T}))$ and hence $x \in \lspan(\Ec(T))$.
	\end{enumerate}
	Finally, if $\Fc$ is a Furstenberg family for which there exists some $x\in\Fc\Rec(T)\setminus\lspan(\Ec(T))$, the equivalence (i) $\Leftrightarrow$ (ii) implies that $Z:=\lspan\{ T^nx : n \in \NN_0 \}$ is an infinite-dimensional $T$-invariant vector subspace of $X$. Theorem~\ref{The:F-rec-commute} then shows
	\[
	Z^N \subset \Com_T(x)^N \subset \Fc\Rec(T_{(N)}) \quad \text{ for every } N \in \NN.\qedhere
	\] 
\end{proof}

The strongest $\Fc$-recurrence notion considered in this paper and fulfilling the condition that $\lspan(\Ec(T)) \subset \Fc\Rec(T)$ for every $T \in \Lc(X)$ is that of $\Del^*$-recurrence. The properties of ``{\em having a spanning set of unimodular eigenvectors}'' and that of ``{\em being $\Delta^*$-recurrent}'' have been deeply related in the recent work \cite{GriLo2023} and they are specially near when one considers power-bounded operators (see \cite[Theorem~1.9]{GriLo2023}). It is then natural to ask if the equality $\lspan(\Ec(T)) = \Del^*\Rec(T)$ holds for some general class of linear dynamical systems $(X,T)$, a natural candidate being that of power-bounded operators. The answer is negative as we show in the following trivial example by using Lemma~\ref{Lem:finite.dimension}:

\begin{example}
	{\em There exists a linear dynamical system admitting a $\Del^*$-recurrent vector such that the linear span of its orbit is infinite-dimensional}: consider $X=c_0(\NN)$ or $\ell^p(\NN)$ for some $1\leq p<\infty$ with their usual norms. Define a linear map $T:c_{00}(\NN)\longrightarrow c_{00}(\NN)$ in the following way: $Te_1=e_1$ and for every $k\geq 2$
	\begin{equation*}
		Te_k =  \begin{cases}
					e_{k+1}, & \text{if } 2^m < k < 2^{m+1}, \\
					e_{2^m+1}, & \text{if } k=2^{m+1},
				\end{cases}
	\end{equation*}
	for each $m\geq 0$, where $e_k=(\del_{k,n})_{n=1}^{\infty}$ is the $k$-th vector of the canonical basis of $X$. Since $\|Tx\| = \|x\|$ for each $x \in c_{00}(\NN)$, $T$ extends to a {\em linear isometry} on the whole space $X$. We now show that the vector $x = \sum_{m\geq 0} \frac{1}{2^m} e_{2^m+1} \in X$ has the required properties:
	\begin{enumerate}[1)]
		\item $x \in \Del^*\Rec(T)$: given any $\eps>0$ there is $m_{\eps} \in \NN$ such that $\left\| \sum_{m > m_{\eps}} \frac{1}{2^m}e_{2^m+1} \right\| < \frac{\eps}{2}$. Then, given $n \in 2^{m_{\eps}}\cdot\NN_0$ we have that
		\[
		T^n\left( \sum_{m \leq m_{\eps}} \frac{1}{2^m}e_{2^m+1} \right) = \sum_{m \leq m_{\eps}} \frac{1}{2^m}e_{2^m+1} \quad \text{ so } \quad \|T^nx - x\| \leq 2 \left\| \sum_{m > m_{\eps}} \frac{1}{2^m}e_{2^m+1} \right\| < \eps.
		\] 
		Hence $2^{m_{\eps}}\cdot\NN_0 \subset N_T(x,B(x,\eps))$, so $x \in \Del^*\Rec(T)$ since $2^{m}\cdot\NN_0 \in \Del^*$ for all $m\in\NN$.
		
		\item $\lspan\{T^nx : n\in\NN_0\}$ is infinite-dimensional: otherwise there would exist a polynomial
		\[
		p(z) = \sum_{i=1}^N a_i z^i \text{ with } a_i \in \KK \text{ and } a_N\neq 0 \quad \text{ such that } \quad p(T)x=0,
		\]
		and taking $m_0 \in \NN$ such that $N<2^{m_0}$ we would arrive to the contradiction
		\[
		0 = \left[p(T)x\right]_{2^{m_0}+1+N} = \left[ a_N \cdot T^Nx \right]_{2^{m_0}+1+N} = \frac{a_N}{2^{m_0}} \neq 0.
		\]
	\end{enumerate}
\end{example}

We finally state the desired {\em lineability} results:

\begin{theorem}[\textbf{Lineability}]\label{The:lineability}
	Let $\Fc$ be a Furstenberg family such that the inclusion $\lspan(\Ec(S)) \subset \Fc\Rec(S)$ holds for every linear dynamical system $(Y,S)$. Then, for each $\Fc$-recurrent linear system $(X,T)$ there exists an infinite-dimensional $T$-invariant vector subspace $Z \subset X$ with the property that $Z^N \subset \Fc\Rec(T_{(N)})$ for every $N \in \NN$.
\end{theorem}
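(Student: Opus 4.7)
The plan is to split into two cases depending on whether $\Fc\Rec(T) \setminus \lspan(\Ec(T))$ is non-empty. In the non-empty case, I pick any $x \in \Fc\Rec(T) \setminus \lspan(\Ec(T))$ and apply Lemma~\ref{Lem:finite.dimension} directly: the lemma already provides an infinite-dimensional $T$-invariant subspace $Z = \lspan\{T^nx : n \in \NN_0\} \subset X$ with the required property $Z^N \subset \Fc\Rec(T_{(N)})$ for every $N \in \NN$, finishing this branch immediately.

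In the opposite case $\Fc\Rec(T) \subset \lspan(\Ec(T))$, the $\Fc$-recurrence of $T$ (i.e.\ density of $\Fc\Rec(T)$) forces $\lspan(\Ec(T))$ to be dense in $X$. Since every finite-dimensional subspace of an F-space is closed and $X$ is infinite-dimensional, $Z := \lspan(\Ec(T))$ must itself be infinite-dimensional; it is automatically $T$-invariant, as each unimodular eigenvector is mapped to a scalar multiple of itself. It remains to check $Z^N \subset \Fc\Rec(T_{(N)})$, for which the plan is to establish the stronger inclusion $Z^N \subset \lspan(\Ec(T_{(N)}))$ and then invoke the hypothesis of the theorem applied to the linear dynamical system $(X^N, T_{(N)})$.

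The inclusion $Z^N \subset \lspan(\Ec(T_{(N)}))$ is straightforward in the complex case: for any $x \in \Ec(T)$ with $Tx = \lambda x$ and $\lambda \in \TT$, each of the $N$ coordinate embeddings of $x$ into $X^N$ is a $\lambda$-eigenvector of $T_{(N)}$, so every element of $Z^N$ is a finite sum of such unimodular eigenvectors of $T_{(N)}$. The main obstacle is the real case, which I plan to handle via the complexification machinery of Remark~\ref{Rem:complexification}: the canonical isomorphism $\til{X^N} \cong \til{X}^N$ intertwines $\til{T_{(N)}}$ with $\til{T}_{(N)}$, so the complex argument applied to $\til{T}$ yields $\lspan(\Ec(\til{T}))^N \subset \lspan(\Ec(\til{T}_{(N)}))$, and the definitional equality $\lspan(\Ec(T)) = \{x \in X : \exists y \in X, \, x+iy \in \lspan(\Ec(\til{T}))\}$ transfers this to the desired inclusion $\lspan(\Ec(T))^N \subset \lspan(\Ec(T_{(N)}))$ in the real setting.
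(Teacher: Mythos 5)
Your proof is correct and follows essentially the same route as the paper's: a two-case split that either produces a vector of $\Fc\Rec(T)\setminus\lspan(\Ec(T))$ and invokes Lemma~\ref{Lem:finite.dimension}, or takes $Z=\lspan(\Ec(T))$ itself and applies the hypothesis to the system $(X^N,T_{(N)})$ via $Z^N\subset\lspan(\Ec(T_{(N)}))$. Your treatment is in fact slightly more explicit than the paper's, which merely asserts the identity $(\lspan(\Ec(T)))^N=\lspan(\Ec(T_{(N)}))$ without spelling out the real case through the complexification.
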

\begin{proof} 
	By assumption $\Fc\Rec(T)$ is dense. We distinguish two cases:
	\begin{enumerate}[(1)]
		\item If $\lspan(\Ec(T))$ is finite-dimensional (or simply if it is not dense in $X$), then we have that $\Fc\Rec(T)\setminus\lspan(\Ec(T))\neq\varnothing$ and Lemma~\ref{Lem:finite.dimension} yields the desired vector subspace.
		
		\item If $\lspan(\Ec(T))$ is infinite-dimensional, then $(\lspan(\Ec(T)))^N = \lspan(\Ec(T_{(N)})) \subset \Fc\Rec(T_{(N)})$ for every $N \in \NN$, and $Z:=\lspan(\Ec(T))$ is the required vector subspace.\qedhere
	\end{enumerate}
\end{proof}

The previous theorem is ``optimal'' for the families considered in this paper: we always have the inclusions
\[
\Per(T) \subset \lspan(\Ec(T)) \subset \Del^*\Rec(T),
\]
and we observed in Section~\ref{Sec:4F-rec} that the families $\Fc$ considered in this paper for which $\Fc\Rec(T) \subset \Del^*\Rec(T)$ are such that $\Fc\Rec(T) \subset \Per(T)$ for every system $(X,T)$. We deduce (at least) the following result:

\begin{corollary}\label{Cor:lineability}
	Let $(X,T)$ be a linear dynamical system. If the operator $T$ is recurrent (resp.\ $\AP$, reiterative, $\Uc$-frequent, frequent, uniformly, $\IP^*$ or $\Del^*$-recurrent), then $\Rec(T)$ (resp.\ $\AP\Rec(T)$, $\RRec(T)$, $\UFRec(T)$, $\FRec(T)$, $\URec(T)$, $\IP^*\Rec(T)$ or $\Del^*\Rec(T)$) contains an infinite-dimensional $T$-invariant vector space, and in particular it is lineable.
\end{corollary}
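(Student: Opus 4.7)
The plan is to deduce the statement directly from Theorem~\ref{The:lineability} by verifying its hypothesis for each family $\Fc$ in the list, namely that $\lspan(\Ec(S)) \subset \Fc\Rec(S)$ holds for every linear dynamical system $(Y,S)$.

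The first step is to recall the chain of inclusions between the families involved:
\[
\Del^* \subset \IP^* \subset \Sc \subset \Dinf \subset \Dsup \subset \BDsup \subset \AP \subset \Ic,
\]
which was already discussed in Example~\ref{Exa:F-rec-hyp} and the paragraph following it. A trivial but crucial observation is that whenever $\Fc_1 \subset \Fc_2$ are Furstenberg families one has $\Fc_1\Rec(S) \subset \Fc_2\Rec(S)$ for every dynamical system $(Y,S)$. Thus it suffices to check the single inclusion $\lspan(\Ec(S)) \subset \Del^*\Rec(S)$ in order to get the analogous inclusion with $\Del^*\Rec(S)$ replaced by any of the sets $\IP^*\Rec(S)$, $\URec(S)$, $\FRec(S)$, $\UFRec(S)$, $\RRec(S)$, $\AP\Rec(S)$ and $\Rec(S)$.

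The inclusion $\lspan(\Ec(S)) \subset \Del^*\Rec(S)$ is exactly what was recorded (for complex systems) in the paragraph immediately preceding Remark~\ref{Rem:complexification}, and for real systems at the end of that remark: any finite linear combination of unimodular eigenvectors is $\Del^*$-recurrent (with reference to \cite[Proposition~4.1]{GriLo2023}). So the hypothesis of Theorem~\ref{The:lineability} is satisfied for each of the eight families listed in the statement.

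It then remains to apply Theorem~\ref{The:lineability}: for each such $\Fc$, if $T$ is $\Fc$-recurrent, the theorem produces an infinite-dimensional $T$-invariant vector subspace $Z \subset X$ with $Z^N \subset \Fc\Rec(T_{(N)})$ for every $N \in \NN$; in particular, taking $N=1$, $Z \subset \Fc\Rec(T)$, which yields the $T$-invariant infinite-dimensional subspace claimed, and hence the lineability of $\Fc\Rec(T)$. No step in this argument poses any real obstacle: the only content beyond a bookkeeping of inclusions is the already-established inclusion $\lspan(\Ec(T)) \subset \Del^*\Rec(T)$ and the conclusion of Theorem~\ref{The:lineability}.
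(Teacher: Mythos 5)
Your proposal is correct and follows exactly the paper's intended deduction: the inclusions $\lspan(\Ec(S))\subset\Del^*\Rec(S)\subset\Fc\Rec(S)$ (via the chain $\Del^*\subset\IP^*\subset\Sc\subset\Dinf\subset\Dsup\subset\BDsup\subset\AP\subset\Ic$) verify the hypothesis of Theorem~\ref{The:lineability} for each listed family, and the theorem then gives the infinite-dimensional $T$-invariant subspace. This is precisely the argument the paper sketches in the paragraph preceding the corollary, so nothing further is needed.
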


Theorem~\ref{The:lineability} (and hence Corollary~\ref{Cor:lineability}) is still true with the same proof if we replace the original assumption that ``{\em $T$ is $\Fc$-recurrent}'' by the very much less restrictive hypothesis that ``{\em $\Fc\Rec(T)$ is dense in some infinite-dimensional closed subspace $Y \subset X$}'', or even by the formally weaker ``{\em $\Fc\Rec(T)$ spans an infinite-dimensional vector subspace}''. These are also necessary conditions for the lineability property as the following example shows:

\begin{example}
	Let $X=c_0(\NN)$ or $\ell^p(\NN)$ for some $1\leq p<\infty$ with their usual norm. Consider a bounded sequence $(\lambda_n)_{n\in\NN} \subset \CC$ and the respective multiplication operator defined as
	\[
	T\left( (x_n)_{n\in\NN} \right) := (\lambda_n \cdot x_n)_{n\in\NN} \quad \text{ for each } (x_n)_{n\in\NN} \in X. 
	\]
	For any fixed any $N \in \NN$, consider a sequence $(\lambda_n)_{n\in\NN}$ with the properties that
	\[
	\lambda_n=1 \text{ for } n\leq N \qquad \text{ and } \qquad \lambda_n \notin \TT \text{ for } n>N.
	\]
	Then it is trivial to check that, for any Furstenberg family $\Fc$, the set $\Fc\Rec(T)$ is exactly a finite-dimensional vector space of dimension $N$.
\end{example}

\subsection{Dense Lineability}

Given a topological vector space $X$ and a subset of vectors $Y \subset X$ with some property, we say that $Y$ is {\em dense lineable} if there exists a dense infinite-dimensional vector subspace $Z \subset X$ such that $Z\setminus\{0\} \subset Y$. As it happens for {\em lineability}, given a linear dynamical system $(X,T)$ we always have that $0 \in \Fc\Rec(T)$ for every Furstenberg family $\Fc$, so the set of $\Fc$-recurrent vectors will be dense lineable if it admits a dense infinite-dimensional vector space (which includes the zero-vector).\\[-5pt]

Given a linear dynamical system $(X,T)$ we provide some sufficient conditions for the set $\Fc\Rec(T)$ to be dense lineable. Filters will play a fundamental role in our study. The motivation for our main result (see Theorem~\ref{The:dense.lineability} below) is again the notion of quasi-rigidity:
\begin{enumerate}[--]
	\item quasi-rigidity implies that $\Rec(T)=\Ic\Rec(T)$ is dense lineable (see Proposition~\ref{Pro:qr->dense.lin});
	
	\item quasi-rigidity coincides with $\Fc(A)$-recurrence, and $\Fc(A)$ is a filter (see Proposition~\ref{Pro:qr.filter});
	
	\item i.e.\ {\em there is a dense set $Y \subset X$ and a filter $\Fc'=\Fc(A)$ such that $\Nc_{pr}(Y) \subset \Fc' \subset \Ic$}.
\end{enumerate}
We can generalize these ideas to arbitrary Furstenberg families $\Fc$, finding some filter $\Fc'$ contained in $\Fc$. In particular, we extend \cite[Theorem 6.1]{BoGrLoPe2022} where it is shown that the set of $\IP^*$-recurrent vectors is always a vector subspace:

\begin{theorem}[\textbf{Dense Lineability}]\label{The:dense.lineability}
	Let $(X,T)$ be a linear dynamical system and let $\Fc$ be a Furstenberg family. If there exist a dense subset $Y \subset X$ and a filter $\Fc'$ such that
	\[
	\Nc_{pr}(Y) \subset \Fc' \subset \Fc,
	\]
	then \ $Z:=\lspan( \bigcup_{S\in\Com_T} S(Y) ) \subset X$ \ is a dense infinite-dimensional $T$-invariant vector subspace with the property that $Z^N \subset \Fc\Rec(T_{(N)})$ for every $N \in \NN$. In particular, if $\Fc$ is a filter itself, then for every $N \in \NN$ we have the equality
	\[
	\Fc\Rec(T)^N = \Fc\Rec(T_{(N)}),
	\]
	which is a $T$-invariant vector subspace of $X^N$, and the following are equivalent:
	\begin{enumerate}[{\em(i)}]
		\item $T$ is $\Fc$-recurrent;
		
		\item $\Fc\Rec(T_{(N)})$ is a dense infinite-dimensional vector subspace of $X^N$ for every $N \in \NN$.
	\end{enumerate}
\end{theorem}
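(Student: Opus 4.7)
The plan is to verify each claimed property of $Z := \lspan\bigl(\bigcup_{S \in \Com_T} S(Y)\bigr)$ in turn, exploiting the filter hypothesis on $\Fc'$ so that return sets survive finite intersection. First I would handle the structural properties. $Z$ is a vector subspace by construction; it contains $Y$ (take $S = I \in \Com_T$), so $\overline{Z} \supset \overline{Y} = X$, giving density. Since $X$ is infinite-dimensional while every finite-dimensional subspace of a Hausdorff topological vector space is closed, a finite-dimensional $Z$ would force $Z = X$, a contradiction; hence $Z$ is infinite-dimensional. Finally, $Z$ is $T$-invariant because $T \in \Com_T$, so $T \circ S \in \Com_T$ for every $S \in \Com_T$, whence $T(S(Y)) \subset \bigcup_{R \in \Com_T} R(Y) \subset Z$.

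The core step is $Z^N \subset \Fc\Rec(T_{(N)})$. Given $(z_1,\ldots,z_N) \in Z^N$, I write each $z_i = \sum_{j=1}^{k_i} \alpha_{i,j}\, S_{i,j}(y_{i,j})$ with $S_{i,j} \in \Com_T$ and $y_{i,j} \in Y$. For an arbitrary neighbourhood $U \subset X^N$ of $(z_1,\ldots,z_N)$, continuity of the vector operations yields neighbourhoods $U_{i,j}$ of $S_{i,j}(y_{i,j})$ such that whenever $x_{i,j} \in U_{i,j}$ for all $(i,j)$, the vector $\bigl(\sum_j \alpha_{1,j} x_{1,j},\ldots,\sum_j \alpha_{N,j} x_{N,j}\bigr)$ lies in $U$. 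Corollary~\ref{Cor:return-sets-commute} applied to each $S_{i,j}$ gives
\[
N_T(S_{i,j}(y_{i,j}),\, U_{i,j}) = N_T(y_{i,j},\, S_{i,j}^{-1}(U_{i,j})) \in \Nc_{pr}(Y) \subset \Fc'.
\]
The filter property then places the finite intersection $A := \bigcap_{i,j} N_T(S_{i,j}(y_{i,j}), U_{i,j})$ inside $\Fc'$, and any $n \in A$ satisfies $T^n z_i \in \sum_j \alpha_{i,j} U_{i,j}$ for every $i$, so $T_{(N)}^n(z_1,\ldots,z_N) \in U$. This gives $A \subset N_{T_{(N)}}((z_1,\ldots,z_N), U)$, and hereditary upwardness of $\Fc \supset \Fc'$ concludes that the return set belongs to $\Fc$.

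For the ``in particular'' part, I assume $\Fc$ is itself a filter. The inclusion $\Fc\Rec(T_{(N)}) \subset \Fc\Rec(T)^N$ follows by testing with product neighbourhoods of the form $X \times \cdots \times U_i \times \cdots \times X$. The reverse inclusion $\Fc\Rec(T)^N \subset \Fc\Rec(T_{(N)})$ is the direct filter-intersection argument: $\bigcap_{i=1}^N N_T(z_i,U_i)$ is a finite intersection of sets in $\Fc$, hence in $\Fc$. Specialising the equality to $N=1$ shows that $\Fc\Rec(T)$ is a vector subspace (sum a neighbourhood of $\alpha_1 z_1 + \alpha_2 z_2$ back through continuity of $+$ and $\cdot\,$), and Corollary~\ref{Cor:return-sets-commute} makes it $T$-invariant. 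For the equivalence, (ii) $\Rightarrow$ (i) is the case $N=1$; for (i) $\Rightarrow$ (ii), I apply the first part of the theorem with $Y := \Fc\Rec(T)$ and $\Fc' := \Fc$, obtaining a dense infinite-dimensional $T$-invariant subspace $Z \subset \Fc\Rec(T)$, and the equality $\Fc\Rec(T_{(N)}) = \Fc\Rec(T)^N \supset Z^N$ then propagates the property to all $N$.

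The main obstacle I expect lies in the bookkeeping of the core step: one has to reduce the recurrence of a linear combination $\sum_j \alpha_{i,j} S_{i,j}(y_{i,j})$ to the recurrence of each underlying $y_{i,j}$ via Corollary~\ref{Cor:return-sets-commute}, and then extract a \emph{single} common return time in $\Fc'$ valid for all $N$ coordinates simultaneously. This simultaneity is precisely what the filter hypothesis on $\Fc'$ is designed to deliver, and without it (as one sees in the case of general non-filter families such as $\Dinf$ or $\BDsup$) the argument would break down.
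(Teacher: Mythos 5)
Your proposal is correct and follows essentially the same route as the paper: decompose elements of $Z$ as finite linear combinations of vectors $S(y)$ with $S\in\Com_T$ and $y\in Y$, use continuity of the vector operations to choose neighbourhoods, pull the return sets back to $\Nc_{pr}(Y)$ via Corollary~\ref{Cor:return-sets-commute}, and let the filter property of $\Fc'$ supply a common return set surviving the finite intersection. The only differences are organizational (you carry the scalars explicitly and spell out the structural properties of $Z$ and the ``in particular'' clause, which the paper leaves largely implicit), so nothing of substance changes.
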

\begin{proof}
	It is clear that $Z \subset X$ is a dense infinite-dimensional $T$-invariant vector subspace. Let us show that $\Nc_{pr}(Z) \subset \Fc'$: given $y_1,y_2,...,y_N \in \bigcup_{S\in\Com_T} S(Y)$ and any neighbourhood $U \subset X$ of $x:=\sum_{i=1}^N y_i \in Z$ we can find neighbourhoods $U_i$ of $y_i$, for $1\leq i\leq N$, such that
	\[
	\sum_{i=1}^N U_i \subset U \subset X \quad \text{ and hence } \quad N_{T}(x,U) \supset \bigcap_{i=1}^N N_T(y_i,U_i) \in \Fc' \subset \Fc,
	\]
	since $\Nc_{pr}(\{y_i\}) \subset \Fc'$ for all $1\leq i\leq N$ by Theorem~\ref{The:F-rec-commute}. Finally, given any $N \in \NN$, $x_1,x_2,...,x_N \in Z$ and any neighbourhood $V \subset X^N$ of the $N$-tuple $z:=(x_1,...,x_N) \in X^N$, we can find neighbourhoods $V_i$ of $x_i$, for $1\leq i\leq N$, such that
	\[
	V_1\oplus\cdots\oplus V_N \subset V \subset X^N \quad \text{ and hence } \quad  N_{T_{(N)}}(z,V) \supset \bigcap_{i=1}^N N_T(x_i,V_i) \in \Fc' \subset \Fc.
	\]
	The arbitrariness of $V \subset X^N$ implies that $z \in \Fc\Rec(T_{(N)})$.
\end{proof}

Note that Theorem~\ref{The:dense.lineability} extends the result obtained in Proposition~\ref{Pro:qr->dense.lin} to every $N$-fold direct sum operator. Moreover, in view of Theorems~\ref{The:F-rec-commute} and \ref{The:dense.lineability} we can generalize Proposition~\ref{Pro:sufficient-quasi-rigid} establishing, for dense lineability, the following (natural) sufficient conditions:

\begin{proposition}\label{Pro:dense.lineability}
	Let $(X,T)$ be a linear dynamical system and let $\Fc$ be a Furstenberg family. If $T$ admits an $\Fc$-recurrent vector $x \in \Fc\Rec(T)$ with a dense $\Com_T$-orbit
	\[
	\Com_T(x) = \{Sx : S \in \Com_T \},
	\]
	then $\Fc\Rec(T_{(N)})$ is dense lineable in $X^N$ for every $N \in \NN$. In particular, the latter is true whenever any of the following holds:
	\begin{enumerate}[--]
		\item $T$ admits an $\Fc$-recurrent and cyclic vector;
		
		\item $T$ admits an $\Fc$-hypercyclic vector.
	\end{enumerate}
\end{proposition}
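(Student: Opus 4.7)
The proof will be a direct consequence of the machinery already developed in Section~\ref{Sec:4F-rec}. The plan is to exhibit $\Com_T(x)^N$ itself as the witnessing dense infinite-dimensional subspace of $\Fc\Rec(T_{(N)})$ inside $X^N$.

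First I would invoke Lemma~\ref{Lem:Com_T-subspace}, which guarantees that $\Com_T(x)$ is a vector subspace of $X$. Since by hypothesis this subspace is dense in the infinite-dimensional F-space $X$, it must itself be infinite-dimensional (a finite-dimensional subspace is closed and hence cannot be dense in an infinite-dimensional topological vector space). It follows that the Cartesian power $\Com_T(x)^N$ is a dense infinite-dimensional vector subspace of $X^N$ for every $N \in \NN$.

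Next, I would invoke Theorem~\ref{The:F-rec-commute} applied to the $\Fc$-recurrent vector $x$: it provides the inclusion
\[
\Com_T(x)^N \ \subset \ \Fc\Rec(T_{(N)}) \quad \text{ for every } N \in \NN.
\]
Combining the two observations shows that $\Fc\Rec(T_{(N)})$ contains a dense infinite-dimensional vector subspace of $X^N$, which is precisely the dense lineability of $\Fc\Rec(T_{(N)})$.

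For the ``in particular'' part I would check that both sufficient conditions reduce to the main hypothesis. If $x$ is $\Fc$-recurrent and cyclic for $T$, then the set $\{p(T)x : p \text{ polynomial}\} = \lspan(\orb(x,T))$ is dense in $X$; since every polynomial $p(T)$ belongs to $\Com_T$, this set is contained in $\Com_T(x)$, so $\Com_T(x)$ is dense. If $x$ is $\Fc$-hypercyclic, then $N(x,U) \in \Fc$ for every non-empty open $U \subset X$; applying this to neighbourhoods of $x$ itself yields $x \in \Fc\Rec(T)$, while $\orb(x,T) \subset \Com_T(x)$ (as each $T^n \in \Com_T$) is dense in $X$. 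In both cases the main hypothesis applies and the conclusion follows. No serious obstacle is expected: the entire argument is a bookkeeping exercise once Theorem~\ref{The:F-rec-commute} and Lemma~\ref{Lem:Com_T-subspace} are in hand.
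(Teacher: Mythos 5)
Your argument is correct and essentially the paper's: the paper likewise rests on Theorem~\ref{The:F-rec-commute}, merely routing the final step through Theorem~\ref{The:dense.lineability} with $Y=\Com_T(x)$ and the filter $\Fc'=\Nc_{pr}(\{x\})$, which yields the same witnessing subspace $\Com_T(x)$ that you read off directly from the inclusion $\Com_T(x)^N\subset\Fc\Rec(T_{(N)})$. Your justification that a dense subspace of an infinite-dimensional F-space is necessarily infinite-dimensional, and your reduction of the two special cases to the main hypothesis, are both sound.
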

\begin{proof}
	Given a vector $x \in \Fc\Rec(T)$ for which $\Com_T(x)$ is dense in $X$, Theorem~\ref{The:F-rec-commute} implies that $\Nc_{pr}(\Com_T(x)) \subset \Nc_{pr}(\{x\}) \subset \Fc$ and the result follows from Theorem~\ref{The:dense.lineability}.
\end{proof}

The dense lineability of the set $\Fc\Rec(T_{(N)})$ implies that $T_{(N)}$ is $\Fc$-recurrent. A slightly weaker hypothesis allows us to keep the $\Fc$-recurrence of every $N$-fold direct product:

\begin{proposition}[\textbf{$\Fc$-recurrence for all $(X^N,T_{(N)})$}]\label{Pro:prod-F-recurrence}
	Let $(X,T)$ be a linear dynamical system and let $\Fc$ be a Furstenberg family. If $T$ is $\Fc$-recurrent and if there exists a vector $x \in X$ with a dense $\Com_T$-orbit
	\[
	\Com_T(x) = \{Sx : S \in \Com_T \},
	\]
	then $T_{(N)}:X^N\longrightarrow X^N$ is $\Fc$-recurrent for every $N \in \NN$. In particular, the latter is true whenever any of the following holds:
	\begin{enumerate}[--]
		\item $T$ is $\Fc$-recurrent and cyclic;
		
		\item $T$ is $\Fc$-recurrent and hypercyclic.
	\end{enumerate}
\end{proposition}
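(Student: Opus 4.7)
The plan is to apply Theorem~\ref{The:F-rec-commute} not at the vector $x$ itself (which is not assumed to be $\Fc$-recurrent, in contrast to Proposition~\ref{Pro:dense.lineability}) but at some $\Fc$-recurrent vector $y$ sufficiently close to $x$, thereby transferring the density of $\Com_T(x)$ to a point where Theorem~\ref{The:F-rec-commute} can actually be activated. The continuity of the elements of $\Com_T$ will be the glue that makes this transfer work, and the density of $\Fc\Rec(T)$ (which is what $\Fc$-recurrence of $T$ means) supplies the anchor point $y$.

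Concretely, to show that $\Fc\Rec(T_{(N)})$ is dense in $X^N$, I would fix a basic open set $V_1 \times \cdots \times V_N \subset X^N$. First, using the density of $\Com_T(x)$ in $X$, I would select maps $S_1,\ldots,S_N \in \Com_T$ such that $S_i x \in V_i$ for every $1 \leq i \leq N$. Second, by the continuity of the finitely many maps $S_i$, I would produce a single neighborhood $U$ of $x$ with the property that $S_i(U) \subset V_i$ for all $i$. Third, using that $T$ is $\Fc$-recurrent, I would pick a vector $y \in U \cap \Fc\Rec(T)$. Then the tuple $(S_1 y,\ldots,S_N y)$ lies in $V_1 \times \cdots \times V_N$, and Theorem~\ref{The:F-rec-commute} applied to $y$ gives $\Com_T(y)^N \subset \Fc\Rec(T_{(N)})$, so $(S_1 y,\ldots,S_N y) \in \Fc\Rec(T_{(N)}) \cap (V_1\times\cdots\times V_N)$, as required.

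For the ``in particular'' part, if $v$ is a cyclic (resp.\ hypercyclic) vector for $T$, then $\{p(T)v : p \text{ polynomial}\} = \lspan(\orb(v,T)) \subset \Com_T(v)$ (resp.\ $\orb(v,T) \subset \Com_T(v)$) is a dense subset of $X$, so taking $x := v$ satisfies the hypothesis and the main statement applies. The argument has no real obstacle: the crucial conceptual point, which is already packaged inside Theorem~\ref{The:F-rec-commute}, is that $\Fc$-recurrence propagates along the commutant, so once we locate a single $\Fc$-recurrent $y$ in $U$ we automatically obtain $\Fc$-recurrent $N$-tuples at every point of $\Com_T(y)^N$; the continuity argument then merely ensures that we can position such tuples inside any prescribed product neighborhood.
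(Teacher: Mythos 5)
Your proposal is correct and follows essentially the same route as the paper's own proof: both reduce the density of $\Fc\Rec(T_{(N)})$ to finding, inside a prescribed open set of $X^N$, a tuple $(S_1y,\ldots,S_Ny)$ with $y \in \Fc\Rec(T)$ close to $x$, using the continuity of the $S_i$ and the inclusion $\Com_T(y)^N \subset \Fc\Rec(T_{(N)})$ from Theorem~\ref{The:F-rec-commute}. The treatment of the ``in particular'' cases via $\lspan(\orb(v,T)) \subset \Com_T(v)$ also matches the paper.
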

This has been recently proved in \cite[Proposition~3.8]{CarMur2022_arXiv}, but we repeat here the argument for the sake of completeness:
\begin{proof}
	Fix $N \in \NN$. By Theorem~\ref{The:F-rec-commute} we have the inclusion
	\[
	Y := \bigcup_{y \in \Fc\Rec(T)} \Com_T(y)^N \subset \Fc\Rec(T_{(N)}),
	\]
	so it is enough to show that $Y$ is dense in $X^N$. By assumption the point $x \in X$ has a dense $\Com_T$-orbit, so given any non-empty open subset $U \subset X^N$ there are $N$ continuous maps $S_1,S_2,...,S_N \in \Com_T$ such that $(S_1x,S_2x,...,S_Nx) \in U$. By the continuity of $S_1,S_2,...,S_N$, and since $T$ is $\Fc$-recurrent, we can find $y \in \Fc\Rec(T)$ near enough to $x$ such that
	\[
	(S_1y,S_2y,...,S_Ny) \in \ \Com_T(y)^N \cap U \ \subset \ Y \cap U.\qedhere
	\]
\end{proof}

\begin{remark}
	We include here some comments about the previous results:
	\begin{enumerate}[(a)]
		\item Proposition~\ref{Pro:prod-F-recurrence} remains valid for non-linear dynamical systems. It is a slight improvement of \cite[Theorem 9.1]{CoMaPa2014} in terms of Furstenberg families and non-linear commuting maps, and it has been independently proved in the recent work \cite[Proposition~3.8]{CarMur2022_arXiv}.
		
		\item Given a Furstenberg family $\Fc$, the following is a natural open question:
		\begin{problem}
			Suppose that a linear dynamical system $(X,T)$ does not admit any dense $\Com_T$-orbit. Can every $N$-fold direct sum $(X^N,T_{(N)})$ be $\Fc$-recurrent?
		\end{problem}
		
		\item The assumptions of Proposition~\ref{Pro:dense.lineability} imply those of Proposition~\ref{Pro:prod-F-recurrence}. Conversely:
		\begin{problem}
			Let $\Fc$ be a Furstenberg family. Can an $\Fc$-recurrent linear system $(X,T)$ admit dense $\Com_T$-orbits but fulfill that $\Fc\Rec(T) \cap \{ x \in X : \cl{\Com_T(x)} = X \} = \varnothing$?
		\end{problem}
		This seems to be a tough question since the commutator of an operator is usually difficult to describe. If instead of dense $\Com_T$-orbits we just consider the set of {\em hypercyclic} vectors, then the answer is yes: Menet constructed a {\em chaotic} (and hence with dense periodic vectors) operator $T$, which is not $\Uc$-frequently hypercyclic (see \cite{Menet2017}), i.e.
		\begin{enumerate}[--]
			\item {\em Such an operator $T$ fulfills the assumptions of Proposition~\ref{Pro:prod-F-recurrence} for every Furstenberg family $\Fc$ satisfying that $\Delta^* \subset \Fc \subset \Dsup$, but we have that $\Fc\Rec(T) \cap \HC(T) = \varnothing$ so that $T$ has no vector which is $\Fc$-recurrent and hypercyclic at the same time}.
		\end{enumerate}
		Considering now the set of {\em cyclic} vectors (instead of dense $\Com_T$-orbits or hypercyclic vectors), Corollary~\ref{Cor:Rec+Cyc} and Proposition~\ref{Pro:dense.lineability} imply that:
		\begin{enumerate}[--]
			\item {\em If $(X,T)$ is a recurrent and cyclic linear dynamical system, then the set $\Rec(T_{(N)})$ is dense lineable for every $N \in \NN$}.
		\end{enumerate}
		In general we cannot change $\Rec(T_{(N)})$ into $\Fc\Rec(T_{(N)})$ unless we could ensure that $\Fc\Rec(T)$ is a co-meager set. This is the case for $\AP$-recurrence (see \cite{KwiLiOpYe2017,CarMur2022_MS}):
		\begin{enumerate}[--]
			\item {\em If $(X,T)$ is an $\AP$-recurrent (i.e.\ multiple recurrent) and cyclic linear dynamical system, then the set $\AP\Rec(T_{(N)})$ is dense lineable for every $N \in \NN$}.
		\end{enumerate}		
		In \cite[Example~2.4]{BoGrLoPe2022} it is exhibited a $\BDsup$-recurrent (i.e.\ reiteratively recurrent) operator $T$ for which the set $\BDsup\Rec(T)=\RRec(T)$ is meager. Such an operator $T$ is not cyclic, and in view of \cite[Theorem~2.1]{BoGrLoPe2022} (result which states that: {\em the set of $\BDsup$-recurrent vectors is co-meager for every $\BDsup$-recurrent and hypercyclic operator}) we ask:
		\begin{problem}\footnote{This problem has recently been solved in the negative; see \cite{LoMe2024_JMAA}.}
				Let $(X,T)$ be a $\BDsup$-recurrent (i.e.\ reiteratively recurrent) and cyclic linear dynamical system. Is $\RRec(T)$ a co-meager set?
		\end{problem}
	\end{enumerate}
\end{remark}

\subsection{Dense Lineability for $\Fc$-hypercyclicity}\label{SubSec:5.3dense.lin.F-hyp}

Under some natural conditions on the Furstenberg family $\Fc$ we can apply the $\Fc$-recurrence theory developed above in order to obtain some results regarding $\Fc$-hypercyclicity. We will consider {\em right-invariant} and {\em upper} Furstenberg families. Let us recall the definitions:\\[-5pt]

A family $\Fc$ is said to be {\em right-invariant} if for every $A \in \Fc$ and every $n \in \NN_0$ the set
\[
A+n = \{k+n : k \in A\}
\]
also belongs to $\Fc$. For example, the families $\Ic, \AP, \Sc, \Tc, \TS, \Ic^*$ and the density ones are easily seen to be right-invariant. However, the families $\IP$ and $\Del$ together with their dual families $\IP^*$ and $\Del^*$ are not right-invariant (see \cite[Proposition~5.6]{BMPP2019}).\\[-5pt]

It is shown in \cite{BoGrLoPe2022} that given any right-invariant Furstenberg family $\Fc$, then every single $\Fc$-recurrent and hypercyclic vector is indeed $\Fc$-hypercyclic. As a consequence we obtain a kind of {\em Herrero-Bourdon theorem} (see~\cite[Theorem 2.55]{GrPe2011_book}) for $\Fc$-hypercyclicity:

\begin{corollary}
	Let $(X,T)$ be a linear dynamical system and assume that $\Fc$ is a right-invariant Furstenberg family. If $x$ is an $\Fc$-hypercyclic vector for $T$, then
	\[
	\{ p(T)x : p \text{ polynomial} \}\setminus\{0\},
	\]
	is a dense set of $\Fc$-hypercyclic vectors. In particular, any $\Fc$-hypercyclic operator admits a dense invariant vector space consisting, except for zero, of $\Fc$-hypercyclic vectors.
\end{corollary}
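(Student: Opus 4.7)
The plan is to combine three ingredients already available: the classical Herrero--Bourdon theorem for plain hypercyclicity, the commutant-stability of $\Fc\Rec(T)$ from Corollary~\ref{Cor:return-sets-commute}, and the cited result from \cite{BoGrLoPe2022} asserting that, for right-invariant families $\Fc$, every vector which is simultaneously $\Fc$-recurrent and hypercyclic is automatically $\Fc$-hypercyclic.

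First I would note that $\{p(T)x : p \text{ polynomial}\}$ is a $T$-invariant vector subspace of $X$ (it is the image of the polynomial algebra $\KK[z]$ under the evaluation $p\mapsto p(T)x$, and $T\cdot p(T)x = (zp)(T)x$), and that it is automatically dense, since it contains the orbit $\orb(x,T)$ and $x$ is hypercyclic. So only the $\Fc$-hypercyclicity of each non-zero element needs to be established. Fix a polynomial $p$ with $p(T)x\neq 0$. On the one hand, the classical Herrero--Bourdon theorem \cite[Theorem~2.55]{GrPe2011} tells us that $p(T)x$ is hypercyclic for $T$, because $x$ is. On the other hand, $x$ is in particular $\Fc$-recurrent (apply the $\Fc$-hypercyclicity condition to a neighbourhood of $x$), and since $p(T)\in\Com_T$ Corollary~\ref{Cor:return-sets-commute} gives $p(T)x\in\Fc\Rec(T)$.

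At this point $p(T)x$ is both hypercyclic and $\Fc$-recurrent. Invoking the result of \cite{BoGrLoPe2022} for the right-invariant family $\Fc$, we conclude that $p(T)x$ is $\Fc$-hypercyclic. Varying $p$ over all polynomials with $p(T)x\neq 0$ yields a dense set of $\Fc$-hypercyclic vectors spanning, together with $0$, a dense $T$-invariant vector subspace of $X$, which is the ``in particular'' part of the statement.

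The step I expect to carry the real content is the second one, where we transfer $\Fc$-recurrence from $x$ to $p(T)x$ via the commutant: this is where the hypothesis that $\Fc$ be a Furstenberg family (closed under supersets and tail truncations) is used implicitly through Corollary~\ref{Cor:return-sets-commute}. The remaining ingredients are either classical (Herrero--Bourdon) or explicitly cited, so the only non-routine check is ensuring that the right-invariance of $\Fc$ is genuinely needed only in the last implication (passing from ``$\Fc$-recurrent and hypercyclic'' to ``$\Fc$-hypercyclic''), which it is.
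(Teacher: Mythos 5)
Your proof is correct and follows essentially the same route as the paper: Herrero--Bourdon for density and hypercyclicity of the non-zero $p(T)x$, commutant-invariance of $\Fc\Rec(T)$ to transfer $\Fc$-recurrence to $p(T)x$, and the identity $\Fc\HC(T)=\Fc\Rec(T)\cap\HC(T)$ from \cite{BoGrLoPe2022} for right-invariant families. The only cosmetic difference is that you invoke Corollary~\ref{Cor:return-sets-commute} where the paper cites Theorem~\ref{The:F-rec-commute}, which is just its $N$-fold generalization.
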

\begin{proof}
	Follows from the original Herrero-Bourdon theorem, together with Theorem~\ref{The:F-rec-commute} and the equality
	\[
	\Fc\HC(T) = \Fc\Rec(T) \cap \HC(T),
	\]
	showed in \cite{BoGrLoPe2022} for every right-invariant Furstenberg family $\Fc$.
\end{proof}

Apart from {\em right-invariance} we may assume the {\em u.f.i.\ upper} condition (defined in \cite{BG2018}) to obtain even stronger results: a family $\Fc$ is said to be {\em upper} if it can be written as
\[
\Fc = \bigcup_{\delta \in D} \Fc_{\delta} \quad \text{ with } \quad \Fc_{\delta} = \bigcap_{m \in M} \Fc_{\delta,m}
\]
for sets $\Fc_{\delta,m} \subset \Part(\NN_0)$ such that $\delta\in D$ and $m \in M$, where $D$ is arbitrary, $M$ is countable and they have the following properties:
\begin{enumerate}
	\item[(i)] for any $\Fc_{\delta,m}$ and any $A \in \Fc_{\delta,m}$ there exists a finite set $F \subset \NN_0$ such that
	\[
	A\cap F \subset B \text{ implies } B \in \Fc_{\delta,m}; 
	\]
	
	\item[(ii)] for any $A \in \Fc$ there exists some $\delta \in D$ such that for all $n \in \NN_0$ we have
	\[
	(A-n)\cap\NN_0 \in \Fc_{\delta}.
	\]
\end{enumerate}
We say that an upper family $\Fc$ as above is {\em uniformly finitely invariant} ({\em u.f.i.}\ for short) if for any $A \in \Fc$ there is some $\delta \in D$ such that, for all $n\in\NN$, $A \cap [n,\infty[ \in \Fc_{\delta}$.\\[-5pt]

The families $\Ic,\AP,\BDsup$ and $\Dsup$ are easily checked to be {\em u.f.i.\ upper}, while $\Dinf$ is not even {\em upper} (see \cite{BG2018,CarMur2022_MS}). As it is shown in \cite{BG2018}, there exists a kind of {\em Birkhoff transitivity theorem} for these {\em u.f.i.\ upper} families, and in particular, the following holds:

\begin{corollary}
	Let $(X,T)$ be an $\Fc$-recurrent linear dynamical system where $\Fc$ is a u.f.i.\ upper Furstenberg family. If there is a dense set $X_0 \subset X$ such that $T^nx\to 0$ for each $x \in X_0$, then the set $\Fc\Rec(T_{(N)})$ is dense lineable for every $N \in \NN$. In particular:
	\begin{enumerate}[{\em(a)}]
		\item If $T$ is recurrent then $\Rec(T_{(N)})$ is dense lineable for every $N \in \NN$;
		
		\item If $T$ is $\AP$-recurrent then $\AP\Rec(T_{(N)})$ is dense lineable for every $N \in \NN$;
		
		\item If $T$ is reiteratively recurrent then $\RRec(T_{(N)})$ is dense lineable for every $N \in \NN$;
		
		\item If $T$ is $\Uc$-frequently recurrent then $\UFRec(T_{(N)})$ is dense lineable for every $N \in \NN$.
	\end{enumerate}
\end{corollary}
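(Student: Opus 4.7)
The plan is to derive an $\Fc$-hypercyclic vector for $T$ from the hypotheses and then invoke Proposition~\ref{Pro:dense.lineability}. Recall that the last paragraph of Subsection~\ref{SubSec:5.3dense.lin.F-hyp} invokes a Birkhoff-type transitivity theorem from \cite{BG2018} valid for u.f.i.\ upper families $\Fc$; roughly, this result gives an $\Fc$-hypercyclic vector (and in fact a comeager set of them) as soon as for every pair of non-empty open sets $U,V\subset X$ the return set $N_T(U,V)=\{n\in\NN_0:T^n(U)\cap V\neq\varnothing\}$ belongs to $\Fc$ in a manner compatible with the decomposition $\Fc=\bigcup_{\delta}\Fc_\delta=\bigcup_{\delta}\bigcap_{m}\Fc_{\delta,m}$. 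So my first task is to verify this Birkhoff-type condition using the two pieces of data we have: $\Fc$-recurrence of $T$ and the existence of a dense set $X_0$ on which $T^nx\to 0$.

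To verify it, I would fix non-empty open sets $U,V\subset X$ and use the density of $\Fc\Rec(T)$ to pick $y\in\Fc\Rec(T)\cap V$, together with a small neighborhood $V_0\subset V$ of $y$; this gives $N_T(y,V_0)\in\Fc$, say $N_T(y,V_0)\in\Fc_\delta$ for some index $\delta$. Next, by density of $X_0$, pick $x_0\in X_0$ with $x_0+y\in U$, and observe that
\[
T^n(x_0+y)=T^nx_0+T^ny.
\]
For every $n\in N_T(y,V_0)$ the term $T^ny$ lies in $V_0$, while $T^nx_0\to 0$ ensures that, outside a finite set $F\subset\NN_0$, the perturbation $T^nx_0$ is small enough to keep $T^n(x_0+y)$ inside $V$. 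Hence $N_T(x_0+y,V)\supset N_T(y,V_0)\setminus F$. The u.f.i.\ upper property (specifically condition (i) of the definition of an upper family) precisely guarantees that removing a finite set from an element of $\Fc_{\delta,m}$ preserves membership in $\Fc_{\delta,m}$, so $N_T(y,V_0)\setminus F$ remains in $\Fc_\delta\subset\Fc$, and thus $N_T(U,V)\in\Fc$.

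Having verified the required Birkhoff condition, the theorem of \cite{BG2018} supplies an $\Fc$-hypercyclic vector $x\in\Fc\HC(T)$. Such a vector is automatically $\Fc$-recurrent \emph{and} hypercyclic, hence has a dense $\Com_T$-orbit (indeed $\lspan(\orb(x,T))\subset\Com_T(x)$ is already dense). Proposition~\ref{Pro:dense.lineability} then yields exactly the desired conclusion: for every $N\in\NN$ the set $\Fc\Rec(T_{(N)})$ is dense lineable in $X^N$. The four listed particular cases (a)--(d) follow at once, since $\Ic,\AP,\BDsup,\Dsup$ are all u.f.i.\ upper families as recalled in the paragraph preceding the corollary.

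The main obstacle is bookkeeping the uniformity inside the definition of a u.f.i.\ upper family: one must carefully track that the same index $\delta$ controls $N_T(y,V_0)$, so that the ``finite truncation'' trick used to absorb the tail $T^nx_0\to 0$ stays inside $\Fc_\delta$. Once this is handled, the rest of the argument is a direct appeal to previously established results of the paper (Theorem~\ref{The:F-rec-commute} and Proposition~\ref{Pro:dense.lineability}) together with the Birkhoff-type criterion from \cite{BG2018}.
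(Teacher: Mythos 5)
Your proposal is correct and follows the same overall route as the paper: the paper's proof simply invokes \cite[Theorem~8.5]{BoGrLoPe2022} (``if $T$ is $\Fc$-recurrent for a u.f.i.\ upper family and admits a dense set of vectors with $T^nx\to 0$, then $T$ is $\Fc$-hypercyclic'') and then applies Proposition~\ref{Pro:dense.lineability}, exactly as you do in your final step. The only difference is that you unpack that citation, re-deriving the $\Fc$-hypercyclicity from the Birkhoff-type criterion of \cite{BG2018} via the perturbation $T^n(x_0+y)=T^nx_0+T^ny$; this is indeed the standard argument behind the cited theorem, and your bookkeeping of the index $\delta$ (chosen from $V$ alone, hence uniform over $U$ and over $m\in M$) is the right thing to track. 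One small imprecision: the step ``$N_T(y,V_0)\setminus F\in\Fc_\delta$'' is not a consequence of condition (i) of the definition of an upper family alone, but rather of the \emph{u.f.i.} condition (choose $\delta$ so that $N_T(y,V_0)\cap[n,\infty[\ \in\Fc_\delta$ for all $n$) combined with the upward heredity of each $\Fc_{\delta,m}$, since $N_T(y,V_0)\setminus F\supset N_T(y,V_0)\cap[n,\infty[$ for $n>\max F$. With that attribution corrected, the argument is complete and agrees with the paper's.
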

\begin{proof}
	By \cite[Theorem~8.5]{BoGrLoPe2022}, if $T$ is $\Fc$-recurrent for a u.f.i.\ upper family $\Fc$ and if there exists a dense set of vectors $x \in X$ such that $T^nx \to 0$ as $n \to \infty$, then $T$ is $\Fc$-hypercyclic. By Proposition~\ref{Pro:dense.lineability} the set $\Fc\Rec(T_{(N)})$ is then dense lineable for every $N \in \NN$. The particular cases follow from the fact that $\Ic$, $\AP$, $\BDsup$ and $\Dsup$ are u.f.i.\ upper families.
\end{proof}

Moreover, if we consider the Furstenberg family $\BDsup$ of {\em positive upper Banach density sets} we can easily reprove a particular case of \cite[Theorem 2.5 and Corollary 2.8]{ErEsMe2021}:

\begin{corollary}[\textbf{\cite[Theorem~2.5 and Corollary~2.8]{ErEsMe2021}}]
	Let $(X,T)$ be a reiteratively hypercyclic linear system. Then the $N$-fold direct sum operator $T_{(N)}:X^N\longrightarrow X^N$ is reiteratively hypercyclic for every $N \in \NN$.
\end{corollary}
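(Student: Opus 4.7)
The plan is to assemble three ingredients: the dense-lineability transfer provided by Proposition~\ref{Pro:dense.lineability}; the classical fact that reiterative hypercyclicity implies weak mixing (so every $N$-fold direct sum is hypercyclic); and the identity $\RHC(T)=\RRec(T)\cap\HC(T)$ coming from the right-invariance of $\BDsup$.

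First I would fix a reiteratively hypercyclic vector $x\in\RHC(T)=\BDsup\HC(T)$. Since $x$ is hypercyclic its $T$-orbit, and hence $\Com_T(x)\supset\orb(x,T)$, is dense in $X$; since $x$ is also reiteratively recurrent, Proposition~\ref{Pro:dense.lineability} applied with $\Fc=\BDsup$ yields, for every $N\in\NN$, a dense $T_{(N)}$-invariant vector subspace $Z\subset X^N$ with $Z\setminus\{0\}\subset\BDsup\Rec(T_{(N)})=\RRec(T_{(N)})$. In particular, $T_{(N)}$ is reiteratively recurrent.

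Next, invoke the B\`es--Menet--Peris--Puig theorem from \cite{BMPP2019} that every reiteratively hypercyclic operator is weakly mixing. By Furstenberg's classical theorem \cite[Theorem~1.51]{GrPe2011}, $T_{(N)}$ is then topologically transitive, hence hypercyclic by Birkhoff, and $\HC(T_{(N)})$ is a dense $G_{\delta}$ subset of $X^N$. Because $\BDsup$ is a right-invariant Furstenberg family, the identity $\RHC(T_{(N)})=\RRec(T_{(N)})\cap\HC(T_{(N)})$ from \cite{BoGrLoPe2022} (as recalled in Subsection~\ref{SubSec:5.3dense.lin.F-hyp}) reduces the problem to showing this intersection is non-empty.

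The \textbf{main obstacle} is precisely this last step: the dense subspace $Z\subset\RRec(T_{(N)})$ produced by Proposition~\ref{Pro:dense.lineability} is not a priori comeager, and so it might in principle miss the dense $G_{\delta}$ set $\HC(T_{(N)})$. My preferred way to close the gap is to upgrade the reiterative recurrence of $T_{(N)}$ to the comeagerness of $\RRec(T_{(N)})$, after which $\RRec(T_{(N)})\cap\HC(T_{(N)})$ is comeager and hence non-empty. This upgrade is the technically delicate point: it can be extracted from the Birkhoff-type transitivity theorem for u.f.i.\ upper Furstenberg families in \cite{BG2018}, using that reiterative hypercyclicity of $T$ supplies a uniform density $\delta>0$ with $N_T(U,V)\in\BDsup_{\delta}$ for all non-empty open $U,V\subset X$; combining this uniform parameter with the $\Com_T$-orbit argument of Theorem~\ref{The:F-rec-commute} propagates uniform reiterative transitivity to each $T_{(N)}$, and hence $\RHC(T_{(N)})$ is comeager in $X^N$.
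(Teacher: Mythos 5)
Your overall architecture is the same as the paper's: (1) reiterative hypercyclicity implies weak mixing, so every $T_{(N)}$ is hypercyclic; (2) $T_{(N)}$ is reiteratively recurrent, which you get from Proposition~\ref{Pro:dense.lineability} (the paper uses the slightly weaker Proposition~\ref{Pro:prod-F-recurrence}, but your route through dense lineability is equally valid since a reiteratively hypercyclic vector is reiteratively recurrent with dense $\Com_T$-orbit); (3) intersect recurrence with hypercyclicity. The only place you diverge is step (3), and it is also the only place where your argument is not actually complete. You correctly identify that a dense subspace inside $\RRec(T_{(N)})$ need not meet the dense $G_\delta$ set $\HC(T_{(N)})$, and you correctly guess that the fix is a comeagerness upgrade --- but your proposed derivation of that upgrade from \cite{BG2018} is only a sketch, and the intermediate claim that reiterative hypercyclicity of $T$ gives a \emph{uniform} $\delta>0$ with $N_T(U,V)\in\BDsup_{\delta}$ for \emph{all} non-empty open $U,V$ is not something you can just assert; it is essentially the content of the nontrivial ``uniform'' characterization of $\Fc$-hypercyclicity for upper families, and propagating it to $T_{(N)}$ would need its own argument. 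The clean way to close the gap --- and what the paper does --- is to quote \cite[Theorem~2.1]{BoGrLoPe2022} directly: for a reiteratively recurrent and hypercyclic operator (here $T_{(N)}$, by your steps (1) and (2)), the set of reiteratively recurrent vectors is comeager, equivalently every hypercyclic vector is already reiteratively hypercyclic. That single citation replaces your entire last paragraph and makes the proof complete; as written, your final step is a plausible plan rather than a proof.
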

\begin{proof}
	By \cite[Proposition 4]{BMPP2016} we have that $T_{(N)}$ is hypercyclic. By Proposition~\ref{Pro:prod-F-recurrence} we have that $T_{(N)}$ is reiteratively recurrent. By \cite[Theorem 2.1]{BoGrLoPe2022} every hypercyclic vector of $T_{(N)}$ is also reiteratively hypercyclic, so that $T_{(N)}$ is reiteratively hypercyclic.
\end{proof}

\section{Open problems}\label{Sec:6open}

In this section we include three open questions and a few comments related to them. They essentially ask ``what happens if we cannot apply Theorem~\ref{The:dense.lineability}'': 

\begin{problems}
	Let $\Fc$ be a Furstenberg family and assume that it is not a filter. Suppose that $T \in \Lc(X)$ is a continuous linear $\Fc$-recurrent operator:
	\begin{enumerate}[(a)]
		\item Is the set $\Fc\Rec(T_{(N)})$ necessarily dense lineable for every $N \in \NN$?
		
		\item Is the set $\Fc\Rec(T)$ necessarily dense lineable?
		
		\item Is $T\oplus T$ necessarily an $\Fc$-recurrent operator?
	\end{enumerate}
\end{problems}

Note that a positive answer to \textbf{Problem~(a)} would imply an affirmative answer to both \textbf{Problems~(b)} and \textbf{(c)}. However, for usual recurrence (i.e.\ for $\Ic$-recurrence) we already have a negative answer to \textbf{Problems~(a)}~and~\textbf{(c)}:
\begin{enumerate}[--]
	\item {\em in Section~\ref{Sec:3n-qr} we construct recurrent operators $T \in \Lc(X)$ for which $T\oplus T$ is not recurrent}. 
\end{enumerate}
Nevertheless, that example is not solving \textbf{Problem~(b)} in the negative since the mentioned operator fulfills that $\Rec(T)=X$. Thus, finding a possible recurrent operator $T$ for which the set $\Rec(T)$ is not dense lineable demands to solve again the {\em $T\oplus T$-recurrence problem}\footnote{\textbf{Problem~(b)} for both usual and $\AP$-recurrence has recently been solved in the negative; see \cite{LoMe2024_JMAA}.}.\newpage

It is not hard to check that all the operators constructed in Section~\ref{Sec:3n-qr} have a dense lineable set of $\AP$-recurrent vectors. In particular:
\begin{enumerate}[--]
	\item {\em there exist multiple recurrent operators $T \in \Lc(X)$ for which $T\oplus T$ is not even recurrent};
\end{enumerate}
and the same comments done for usual recurrence hold for $\AP$-recurrence. These kind of arguments are not valid for the other recurrence notions considered in this paper:

\begin{proposition}[\textbf{Modification of \cite[Proposition 4]{BMPP2016} and \cite[Proposition 5.6]{BMPP2019}}]\label{Pro:RRec->q-r}
	Let $(X,T)$ be a linear dynamical system. If $T$ is reiteratively recurrent, then $T$ is quasi-rigid, and in particular $\Rec(T_{(N)})$ is dense lineable for every $N \in \NN$.
\end{proposition}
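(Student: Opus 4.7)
The plan is to show that $T_{(N)}:X^N\longrightarrow X^N$ is topologically recurrent for every $N\in\NN$. Theorem~\ref{The:Juan} then yields the quasi-rigidity of $T$, and since quasi-rigidity passes trivially to direct sums (any witnessing sequence $(n_k)_{k\in\NN}$ and dense set $Y\subset X$ for $T$ work verbatim, on the dense set $Y^N\subset X^N$, for $T_{(N)}$), Proposition~\ref{Pro:qr->dense.lin} applied to each $T_{(N)}$ provides the announced dense lineability of $\Rec(T_{(N)})$.

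Fix $N\in\NN$ together with non-empty open sets $U_1,\dots,U_N\subset X$. Since $\RRec(T)$ is dense in $X$, for every $1\leq i\leq N$ I would pick a vector $x_i\in U_i\cap\RRec(T)$ and a neighbourhood $V_i\subset U_i$ of $x_i$, so that the return set $A_i:=N_T(x_i,V_i)$ belongs to $\BDsup$. The difficulty at this point is that $\BDsup$ is \textbf{not a filter} (so Theorem~\ref{The:dense.lineability} is not available) and the sets $A_i$ cannot be intersected directly. The crucial combinatorial input to overcome this obstacle is the classical Bogoliuboff--F\o lner theorem: whenever $A\subset\ZZ$ has positive upper Banach density, the difference set $A-A$ contains a Bohr neighbourhood of $0$ in $\ZZ$, that is, a set of the form
\[
\left\{ n\in\ZZ : \|n\alpha_k\|<\varepsilon \text{ for every } 1\leq k\leq K \right\}
\]
for some $K\in\NN$, $\alpha_1,\dots,\alpha_K\in\TT$ and $\varepsilon>0$ (here $\|\cdot\|$ denotes distance to the nearest integer).

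Bohr neighbourhoods of $0$ are syndetic in $\ZZ$ and are closed under finite intersection (concatenate the frequency lists and take the minimum of the tolerances). Hence $\bigcap_{i=1}^N(A_i-A_i)$ still contains a Bohr neighbourhood of $0$ and in particular meets $\NN$; pick any $n\geq 1$ in this intersection. For every $1\leq i\leq N$ write $n=a_i'-a_i$ with $a_i,a_i'\in A_i$; then $T^{a_i}x_i\in V_i$ and $T^{a_i'}x_i=T^nT^{a_i}x_i\in V_i$, so $T^n(V_i)\cap V_i\neq\varnothing$, and a fortiori $T^n(U_i)\cap U_i\neq\varnothing$, for each $i$. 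This gives $T_{(N)}^n(U_1\times\cdots\times U_N)\cap(U_1\times\cdots\times U_N)\neq\varnothing$, so $T_{(N)}$ is topologically recurrent, completing the reduction.

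The main obstacle is precisely this non-filter character of $\BDsup$, and essentially the whole proof is concentrated in the Bogoliuboff--F\o lner-type input; this is the same combinatorial ingredient that appears in \cite[Proposition~4]{BMPP2016} and \cite[Proposition~5.6]{BMPP2019} to derive topological weak-mixing from reiterative hypercyclicity, here adapted to produce common Banach-density return times at the level of $T_{(N)}$ rather than common hitting times at the level of $T\oplus T$.
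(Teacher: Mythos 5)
Your overall strategy --- reduce the statement to showing that $T_{(N)}$ is topologically recurrent for every $N$, via the observation that $\{n\geq 0 : T^n(V)\cap V\neq\varnothing\}$ contains the difference set $A-A$ of a return set $A\in\BDsup$, and then conclude through Theorem~\ref{The:Juan} and Proposition~\ref{Pro:qr->dense.lin} --- is exactly the one the paper follows, and everything in your write-up except one step is correct. The gap is in the combinatorial lemma you invoke. The assertion ``whenever $\Bdsup(A)>0$ the set $A-A$ contains a Bohr neighbourhood of $0$'' is \emph{not} the classical Bogoliuboff--F\o lner theorem: Bogoliuboff's theorem places a Bohr neighbourhood of $0$ inside the four-fold set $(A-A)+(A-A)$, and F\o lner's refinement only gives that $A-A$ contains a Bohr neighbourhood of $0$ \emph{minus a set of upper Banach density zero}. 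Whether $A-A$ itself must contain a full Bohr neighbourhood of $0$ is a well-known open problem (going back to F\o lner and Veech, and related to Katznelson's question on Bohr versus topological recurrence). As written, the crucial step of your proof therefore rests on an unproved statement.

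The argument is easily repaired in either of two ways. First, F\o lner's genuine theorem suffices: each $A_i-A_i$ contains $B_i\setminus E_i$ with $B_i$ a Bohr neighbourhood of $0$ and $\Bdsup(E_i)=0$, so $\bigcap_{i=1}^N(A_i-A_i)\supset\bigl(\bigcap_{i=1}^N B_i\bigr)\setminus\bigl(\bigcup_{i=1}^N E_i\bigr)$, and since $\bigcap_i B_i$ is again a Bohr neighbourhood of $0$ (hence syndetic, hence of positive lower Banach density) while $\bigcup_i E_i$ has upper Banach density zero, this intersection is infinite and in particular meets $\NN$. Second, and more economically --- this is the paper's route --- one can avoid Bohr sets altogether: a short pigeonhole argument (\cite[Theorem~3.18]{Furstenberg1981}) shows that $A-A\in\Del^*$ whenever $A\in\BDsup$, and $\Del^*$ is a filter, so the sets $\{n\geq 0 : T^{n}(U_i)\cap U_i\neq\varnothing\}$, $1\leq i\leq N$, automatically have a common element. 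With either fix, the rest of your argument (the choice of $x_i$ and $V_i$, the identity $n=a_i'-a_i$ producing $T^n(V_i)\cap V_i\neq\varnothing$, the passage from topological recurrence of all $T_{(N)}$ to quasi-rigidity via Theorem~\ref{The:Juan}, and the final dense-lineability conclusion) goes through as stated.
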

\begin{proof}
	Given a non-empty subset $U \subset X$ we can pick a vector $x \in \RRec(T)\cap U$ and hence $\Bdsup(N_T(x,U))>0$. Recall that
	\[
	\big(N_T(x,U)-N_T(x,U)\big) := \left\{ s_2-s_1 : s_1\leq s_2 \in N_T(x,U) \right\} \subset \left\{ n\geq 0 : T^{n}(U)\cap U\neq\varnothing \right\}.
	\]
	Now we use \cite[Theorem 3.18]{Furstenberg1981_book}, which implies that $A-A \in \Delta^*$ whenever $A \in \BDsup$, obtaining that: for every non-empty open subset $U \subset X$ we have that
	\[
	\{ n\geq0 : T^{n}(U)\cap U\neq\varnothing \} \in \Delta^*.
	\]
	Since $\Del^*$ is a filter (see \cite{BerDown2008,HS1998}) it is easily checked that every $N$-fold direct sum $(X^N,T_{(N)})$ is topologically recurrent. By Theorem~\ref{The:Juan} we have that $T$ is quasi-rigid and Proposition~\ref{Pro:qr->dense.lin} (or Theorem~\ref{The:dense.lineability}) implies that $\Rec(T_{(N)})$ is dense lineable for every $N \in \NN$.
\end{proof}

This last Proposition~\ref{Pro:RRec->q-r} together with the very recent results \cite[Theorems~5.1 and 5.8]{GriLo2023} show that \textbf{Problems~(a)}, \textbf{(b)} and \textbf{(c)} may have a positive answer for Furstenberg families implying stronger recurrence notions than those of usual and $\AP$-recurrence.\\[-5pt]

We would like to mention that, apart from the properties of (dense) lineability, also the {\em spaceability property} (existence of an infinite-dimensional closed subspace) has been studied for the set of recurrent vectors using the notion of {\em quasi-rigidity} in an essential way. Indeed, the results obtained in \cite{Lopez2024_IMRN} justify again that quasi-rigidity is the analogous notion, for recurrence, to that of weak-mixing for hypercyclicity.

\section*{Funding}

The first author was supported by the project FRONT of the French National Research Agency (grant ANR-17-CE40-0021) and by the Labex CEMPI (ANR-11-LABX-0007-01). The second and third authors were supported by Projects PID2019-105011GB-I00 and PID2022-139449NB-I00 from the MCIN/AEI/10.13039/501100011033/FEDER, UE. The second author was also partially supported by the Spanish Ministerio de Ciencia, Innovaci\'on y Universidades, grant FPU2019/04094. The third author was also partially supported by Generalitat Valenciana, Project PROMETEU/2021/070.

\section*{Acknowledgments}

The authors would like to thank Juan B\`es, Antonio Bonilla and Karl Grosse-Erdmann for suggesting the topic, and also for providing valuable ideas on the subject studied. We also want to thank Robert Deville for letting us know about the work \cite{Auge2012}.

{\footnotesize

}

\newpage

{\small
$\ $\\

\textsc{Sophie Grivaux}: CNRS, Universit\'e de Lille, UMR 8524 - Laboratoire Paul Painlev\'e, F-59000 Lille, France. e-mail: sophie.grivaux@univ-lille.fr\\

\textsc{Antoni L\'opez-Mart\'inez}: Universitat Polit\`ecnica de Val\`encia, Institut Universitari de Matem\`atica Pura i Aplicada, Edifici 8E, 4a planta, 46022 Val\`encia, Spain. e-mail: alopezmartinez@mat.upv.es\\

\textsc{Alfred Peris}: Universitat Polit\`ecnica de Val\`encia, Institut Universitari de Matem\`atica Pura i Aplicada, Edifici 8E, 4a planta, 46022 Val\`encia, Spain. e-mail: aperis@mat.upv.es
}

\end{document}